\theoremstyle{plain}
\newtheorem{thm}{Theorem}[section]
\newtheorem*{thm*}{Theorem}
\newtheorem{prop}[thm]{Proposition}
\newtheorem{lemma}[thm]{Lemma}
\newtheorem{notation}[thm]{Notation}
\theoremstyle{definition}
\newtheorem{definition}[thm]{Definition} 
\newtheorem{example}[thm]{Example}
\theoremstyle{remark}
\newtheorem{remark}[thm]{Remark}
\numberwithin{equation}{thm}
\newcommand{\defect}{\mathrm{def}} 
\newcommand{\define}{\mathrel{\mathop:}=}
\newcommand{\dd}{\mathrm{d}}
\newcommand{\area}{\mathrm{A}}
\newcommand{\itemref}[2]{\ref{#1}~\ref{#2}}
\begin{document}

\title{The triangle groups $(2,4,5)$ and $(2,5,5)$ are not systolic}
\author{Annette Karrer, Petra Schwer, Koen Struyve}
\address{
Annette Karrer, Karlsruhe Institute of Technology, Englerstrasse 2, 76137 Karlsruhe, Germany\newline
Petra Schwer, Otto von Guericke University Magdeburg, Universitätsplatz 2, 39106 Magdeburg, Germany\newline
Koen Struyve, Ghent, Belgium}
\email{annette.karrer@kit.edu,  petra.schwer@ovgu.de, kstruy@gmail.com}

\date{ \today }

\begin{abstract}
In this paper we provide new examples of hyperbolic but nonsystolic groups by showing that the triangle groups $(2,4,5)$ and $(2,5,5)$ are not systolic. Along the way we prove some results about subsets of systolic complexes stable under involutions. 
\end{abstract}

\maketitle

\section{Introduction}

Systolicity of simplicial complexes is a combinatorial notion of nonpositive curvature which is different in nature than the more widely studied notions of CAT(0) or hyperbolic spaces. 
It was first introduced by Januszkiewicz and \'Swi\k{a}tkowski \cite{Jan+Swi06} as well as Haglund \cite{Haglund}. Recall that a group is \emph{systolic} if it admits a geometric action on a systolic complex. 

It was shown by Przytycki and the second author in \cite{PS} that almost all triangle groups are systolic. They construct an explicit systolic complex on which the triangle groups act geometrically by embedding  their  Davis complex  in a larger, systolic complex to which the geometric action extends. This process of systolizing forced them to exclude the triangle groups $(2,4,4), (2,4,5)$ and $(2,5,5)$. In the same paper it was shown that the group $(2,4,4)$ is not systolic and that hence such a construction will never exist for that group. Whether or not the remaining two groups are systolic remained open.

\subsection{Main results and key ideas} 

In the present paper we close this gap and prove that the two hyperbolic triangle groups $(2,4,5)$ and $(2,5,5)$ are also not systolic. In hindsight this shows that the systolization procedure of \cite{PS} was best possible within the class of triangle groups. 

Recall that a \emph{cycle of length $n$} in a flag simplicial complex $X$ is a set of $n$ edges in $X$  which topologically forms a sphere. We say that the complex $X$ is \emph{$k$-large} for some $k\geq 4$ if every cycle {$C$} of length strictly less than $k$ has a \emph{diagonal}, that is an edge in $X$ connecting two nonconsecutive vertices on {$C$}. 
If a complex $X$ is connected, simply connected, and all its vertex links are $6$-large we will say $X$ is \emph{systolic}. Note that every $6$-large, connected and simply connected complex is systolic. 

Our main result is the following. 

\begin{thm}\label{thm:nonsystolic}
	The triangle groups $(2,4,5)$ and $(2,5,5)$ are not systolic.
\end{thm}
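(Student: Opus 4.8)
The plan is to argue by contradiction: suppose $G$ is one of the triangle groups $(2,4,5)$ or $(2,5,5)$ and that $G$ acts geometrically on a systolic complex $X$. The group $G$ contains an involution $\sigma$ (reflection in one of the sides of the fundamental triangle, say the side opposite the order-$2$ vertex is not involutive, so I take the reflection in the side joining the two odd-order vertices, or more robustly the central-type involution coming from the order-$2$ generator). The key is to exploit the ``results about subsets of systolic complexes stable under involutions'' advertised in the abstract: for an involution $\sigma$ acting on a systolic complex, its fixed-point set — or rather an associated combatorially convex invariant subcomplex — is itself systolic (in particular $6$-large), and moreover one can control how $\sigma$ acts on the links of vertices in this fixed set. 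I would first recall or cite that $X$ can be taken to be the systolic complex on which the whole Davis-type picture sits, so that the stabilizer structure of $G$ on $X$ mirrors the reflection structure of the $(2,p,q)$ Coxeter group.

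**Reduction to a local link computation.** Pick a vertex $v\in X$ fixed by a suitable finite subgroup $H\le G$ — for the triangle group this is a dihedral vertex stabilizer $D_{2p}$ or $D_{2q}$ (with $p,q\in\{4,5\}$), or the whole vertex group at a ``triangle'' vertex. The link $\mathrm{lk}(v,X)$ is then a $6$-large flag complex on which $H$ acts. Because $H$ is finite and $X$ is systolic (hence contractible, so $H$ has a fixed point, which one arranges to be a vertex or the barycentre of a simplex), the induced action of $H$ on the link records the ``rotation/reflection'' combinatorics. The contradiction will come from the following incompatibility: the triangle group forces a vertex whose link must simultaneously (i) be $6$-large, (ii) admit an action of a dihedral group $D_{2\cdot 5}$ (or $D_{2\cdot 4}$) with a prescribed quotient/orbit structure, and (iii) contain a specific short cycle — coming from the three reflection walls meeting at that vertex in the Davis complex, whose pattern of lengths is exactly the $(2,4,5)$ or $(2,5,5)$ Coxeter data — that cannot be ``filled in'' with a diagonal without violating the $6$-large condition on a smaller link one dimension down. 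This is precisely the mechanism that failed for $(2,4,4)$ in \cite{PS}, and I expect the same obstruction to persist here: the numbers $4$ and $5$ are too small for the systolic filling to coexist with the dihedral symmetry and with $6$-largeness of links-of-links.

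**The main obstacle.** The hard part will be Step (iii): making the short-cycle/diagonal contradiction rigorous rather than heuristic. Concretely one must show that the hypothetical invariant subcomplex $Y\subseteq X$ — the analogue of the reflection wall, or of the union of two walls crossing at the order-$2$ vertex — is simultaneously forced to be a certain ``thin'' systolic subcomplex (a tree, or a systolic complex of bounded local structure) by the involution lemmas, \emph{and} forced to carry the hyperbolic $(2,p,q)$ geometry with its characteristic girth/curvature, and these two facts collide. This requires (a) establishing the invariant-subcomplex results carefully — that the fixed set or a canonically associated subcomplex of an involution on a systolic complex is $3$-convex / systolic, with a handle on how links transform; (b) identifying which concrete subgroup of $G$ to use so that the invariant subcomplex is small enough to analyze by hand; and (c) running a finite but delicate case analysis on the possible $6$-large links carrying a $D_8$ or $D_{10}$ action, to exhaust all configurations and derive a contradiction in each. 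I would carry this out by first proving the general involution statements in their own section, then specializing: handle $(2,5,5)$ first (the more symmetric case, where both odd vertices behave alike), then $(2,4,5)$ by the same template with the asymmetric modification, and finally assemble these into the proof of Theorem~\ref{thm:nonsystolic}.
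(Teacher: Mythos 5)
Your proposal is a strategy outline rather than a proof, and the central mechanism it relies on does not work. You propose to derive the contradiction \emph{locally}: pick a vertex fixed by a finite dihedral subgroup $D_8$ or $D_{10}$ and show that its link cannot simultaneously be $6$-large and carry the dihedral action together with some short cycle forced by the Coxeter data. But there is no such local obstruction: $D_8$ and $D_{10}$ are finite groups, so by the Chepoi--Osajda fixed point theorem (\cref{thm:fixedpointthm}) they always admit invariant simplices in any systolic complex — indeed they act on a single point. A single dihedral vertex stabilizer therefore never produces a contradiction; what fails for $(2,4,5)$ and $(2,5,5)$ is the \emph{compatibility} of the three reflections simultaneously, and that is inherently a global statement. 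Relatedly, your opening assumption that $X$ "can be taken to be the systolic complex on which the whole Davis-type picture sits, so that the stabilizer structure of $G$ on $X$ mirrors the reflection structure" is not available: $X$ is an arbitrary systolic complex with a simplicial $\Gamma$-action, and nothing about walls, vertex stabilizers, or an embedded Davis complex may be assumed.

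The paper's actual argument is the piece your sketch is missing. For each generator $u$ one forms the invariance set $X_u$ (vertices moved distance at most $1$ by $u$), shows it is a systolic, isometrically embedded full subcomplex (\cref{prop:XuProperties}), and uses the finite fixed point theorem only to conclude that the pairwise intersections $X_u\cap X_v$ are nonempty. One then chooses corners $x\in X_r\cap X_s$, $y\in X_s\cap X_t$, $z\in X_t\cap X_r$, joins them by geodesics inside the respective invariance sets, and spans a \emph{minimal systolic surface} $S$ on the resulting triangle, minimizing perimeter and area. The contradiction comes from a combinatorial Gauss--Bonnet / defect analysis of $S$: the bicycle property (\cref{prop:bicycle}, which is where $n\le 5$ enters and which fails for dihedral groups of order $\ge 12$) and the fact that two of the three generators commute constrain the defects at the corners and along the sides so severely that no admissible defect distribution sums to $6$. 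If you want to salvage your approach, you would need to replace the single-vertex link computation by some global object playing the role of this surface; without it, steps (ii) and (iii) of your reduction have nothing to collide against.
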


This theorem in particular implies that these two groups are nonsystolic while being hyperbolic groups. To our knowledge they are thus the first 2-dimensional examples of groups with these two properties. In higher dimensions examples of nonsystolic hyperbolic groups had been constructed by Januszkiewicz and Swiatkowski in their work on filling invariants \cite{JSfilling}. However, their methods do not apply in our situation as the tools they use only work in dimensions greater than 2.

In order to show that a given group is not systolic one needs to proof that it cannot act geometrically on a systolic complex. We will obtain \cref{thm:nonsystolic} as a consequence of the following fixed point theorem for actions of these groups on systolic complexes. 

\begin{restatable}{thm}{FixedpointThm} 
	\label{thm:fixedPoint}
	Suppose $\Gamma$ is one of the groups $(2,4,5)$ and $(2,5,5)$. For every simplicial action of $\Gamma$ on a systolic complex $X$ there exists a $\Gamma$-invariant simplex in $X$. In particular, every such action on the geometric realization of $X$ has a global fixed point. 
\end{restatable}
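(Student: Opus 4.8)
The plan is to produce a point of $|X|$ fixed by all of $\Gamma$; the minimal simplex of $X$ whose relative interior contains such a point is then automatically $\Gamma$-invariant, and conversely a $\Gamma$-invariant simplex has a $\Gamma$-fixed barycenter, so the two formulations in the statement are equivalent. Write $\Gamma=\langle a,b,c\mid a^2=b^2=c^2=(ab)^2=(bc)^q=(ca)^r=1\rangle$ with $(q,r)\in\{(4,5),(5,5)\}$, and observe that the three subgroups $\langle a,b\rangle$, $\langle b,c\rangle$ and $\langle c,a\rangle$ are finite, of orders $4$, $2q$ and $2r$. Since a finite group acting simplicially on a systolic complex stabilizes a simplex, each of these three subgroups has a global fixed point in $|X|$. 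Writing $F_s\subseteq|X|$ for the fixed set of an involution $s$ --- taken so as to include barycenters of inverted simplices, equivalently the honest fixed set after a suitable subdivision --- this says that the three pairwise intersections $F_a\cap F_b$, $F_b\cap F_c$ and $F_c\cap F_a$ are nonempty.

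The heart of the proof is to upgrade this to $F_a\cap F_b\cap F_c\neq\varnothing$, and this is where the results on involution-stable subsets and the special numerology of the two groups enter. One cannot simply invoke convexity of the $F_s$ together with a Helly property for convex subsets of a systolic complex: fixed sets of involutions need not be convex, since otherwise the same argument would force \emph{every} triangle group to fix a point in every systolic complex on which it acts, contradicting the systolicity (established in \cite{PS}) of, say, $(2,3,7)$. Instead I would use the structural results on involution-stable subsets from the preceding sections to show that, although the $F_s$ are not convex, they are rigid enough near their pairwise intersections \emph{precisely when the governing dihedral groups are small}: the relation $(ab)^2=1$ together with $q,r\le 5$ severely restricts how $F_a$, $F_b$ and $F_c$ can pull apart. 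Concretely, I expect the argument to proceed by contradiction --- assuming the triple intersection is empty, span a combinatorial triangle by geodesics joining points chosen in the three pairwise intersections, fill it by a minimal disk diagram, and derive a contradiction from a combinatorial Gauss--Bonnet estimate in which the defect contributions forced by the orders $2$, $q$, $r$ of $ab$, $bc$, $ca$ are incompatible with the nonpositive-curvature conditions guaranteed by $6$-largeness of the links, exactly in these two cases. This last step is where $(2,4,5)$ and $(2,5,5)$ part ways with the systolic triangle groups, and it is where essentially all of the work lies.

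Finally, any point $p\in F_a\cap F_b\cap F_c$ is fixed by $a$, $b$ and $c$, hence by all of $\Gamma$, and the minimal simplex of $X$ whose relative interior contains $p$ is the desired $\Gamma$-invariant simplex; in particular the action on $|X|$ has the global fixed point $p$. The main obstacle, as indicated, is the middle step: making precise and proving the assertion that small dihedral interactions force the fixed sets of the three generating involutions to meet --- that is, combining the involution-stable-subset theory with the numerical input $(ab)^2=1$, $q,r\le 5$ through a disk-diagram ($\defect$/$\area$) estimate --- everything else being a formal wrap-up.
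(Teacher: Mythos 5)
Your outline reproduces the paper's strategy faithfully at the top level: invoke the fixed point theorem for finite group actions (\cref{thm:fixedpointthm}) on the three dihedral special subgroups to get nonempty pairwise intersections, join three points of these intersections by geodesics, fill the resulting cycle by a minimal systolic surface (\cref{lem:systolic-surface}), and derive a contradiction from combinatorial Gauss--Bonnet. But what you have written is a plan rather than a proof: you explicitly defer the step where, as you say yourself, ``essentially all of the work lies,'' and that step is not a single defect count ``forced by the orders of $ab$, $bc$, $ca$'' --- a bare Gauss--Bonnet estimate keyed to those orders would not distinguish $(2,4,5)$ from, say, the systolic group $(2,3,7)$. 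In the paper this step occupies Sections 3--7 and needs genuinely new structural input: the bicycle property (\cref{prop:bicycle}) describing orbits of vertices of $X_u\cap X_v$ under dihedral groups of order at most $10$ (the only place the numerology $q,r\le 5$ really enters, and which fails already in order $12$), the nondegeneracy statements (\cref{prop:existence}, \cref{prop:notasimplex}), a normalization of the minimal surface by edge-swaps so that every side has nonnegative defect (\cref{prop: sum_of_defects>=0}), corner-defect bounds exploiting the commuting pair $r,s$, and a final multi-case analysis. None of this is supplied or even sketched in enough detail to be checked.

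There is also a structural flaw in the framework you do set up. You propose to work with the honest fixed sets $F_s\subseteq |X|$ ``after a suitable subdivision,'' but barycentric subdivision does not preserve systolicity (links of subdivision vertices acquire $4$-cycles without diagonals), so after subdividing you lose exactly the $6$-largeness needed for the minimal-disk and Gauss--Bonnet machinery. The paper avoids this by replacing fixed sets with the combinatorial invariance sets $X_u$, spanned by vertices that are fixed or moved to an adjacent vertex, and proving these are isometrically embedded, full, systolic subcomplexes of $X$ itself (\cref{prop:XuProperties}). The price of that substitution is that your final wrap-up no longer works verbatim: a vertex of $X_r\cap X_s\cap X_t$ is not a fixed point, and one still needs \cref{lemma:fixpoint} --- itself an application of the bicycle property --- to extract a $\Gamma$-invariant simplex from the triple intersection. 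So both the closing step and the entire middle of your argument rest on material you have not provided.
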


We close this section with two final remarks. 

It is worth noting that the two groups in question seem to share a property with finite groups here as those also satisfy an analogous fixed point theorem \cref{thm:fixedpointthm} that can be obtained from results of Chepoi and Osajda \cite{ChepoiOsajda}. 

Nonsystolicity of the triangle group $(2,4,5)$ has previously been addressed by Andrew Wilks in his unpublished manuscript \cite{Wilks}. His methods would probably extend with some additional work to the other case as well. We have included some remarks comparing both manuscripts in appropriate places. Compare in particular \cref{rem:wilks1,rem:wilks2}.

\subsection{Strategy of proof}

In this section we provide further details on the strategy of the proof of \cref{thm:fixedPoint}. 

Let in the following $\Gamma$ be either the triangle group $(2,4,5)$ or $(2,5,5)$, i.e.\ $\Gamma$ has the following presentation  
\[
\Gamma= \big\langle \{r,s,t\} \;\vert\; x^2 \;\forall\; x\in\{r,s,t\}, (rs)^2,  (st)^j , (rt)^5 \big\rangle \;\text{ where } j= 4 \text{ or }5.
\]

We wish to show that every simplicial action of $\Gamma$ on a systolic complex $X$ has a $\Gamma$-invariant simplex. Throughout this paper we will assume that every action of $\Gamma$ on a complex $X$ is simplicial.  

At first we start by examining subsets of $X$ that are stable under one of the standard generators $r,s$ or $t$.  
More precisely, for a given $u\in\{r,s,t\}$ we examine in Section~\ref{sec:sub-complexes} the structure of the subcomplex $X_u$, called invariance set, spanned by those vertices in $X$ that are either fixed by $u$ or mapped to an adjacent vertex. 
We prove in \cref{prop:XuProperties} that the complex $X_u$ is a systolic, isometrically embedded, full subcomplex of $X$ and that its maximal simplices are stabilized by $u$.
From Chepoi and Osajda's fixed point theorem for simplicial actions of finite groups on systolic complexes, which we restate in \cref{thm:fixedpointthm}, one can deduce that the set $X_u\cap X_v$ is nonempty for any pair of generators $u\neq v\in\{r,s,t\}$. 

\begin{figure}[h!]
	\centering
	\includegraphics[width=0.75\textwidth]{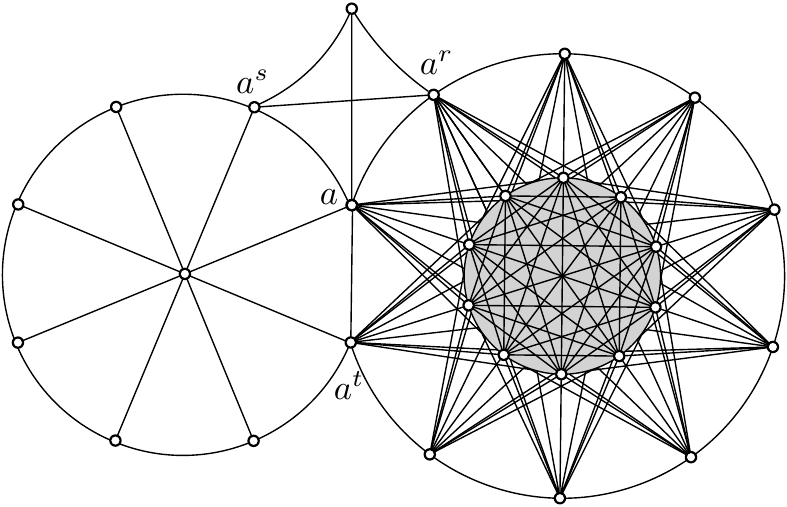}
	\caption{We illustrate \cref{prop:bicycle} in this figure which explains the structure of the orbits of vertices $a$ in pairwise intersections $X_u\cap X_v $ of invariance sets.}
	\label{fig:bicycle}
\end{figure}

Another key ingredient is \cref{prop:bicycle} which shows that the orbit structure for the dihedral subgroups of $(2,4,5)$ or $(2,5,5)$ under a geometric action has a very specific bicycle-like shape as illustrated in \cref{fig:bicycle}. There are three cases for a given $a\in X_u\cap X_v$: either all vertices in $a^{\langle u,v\rangle}$ are connected to a common vertex (see the left "wheel" of the bicycle in \cref{fig:bicycle}) or form a bipartite graph with a clique of the same size (see the right hand side "wheel" in \cref{fig:bicycle}) or they form a clique themselves (illustrated by the bicycle's saddle in \cref{fig:bicycle}). This structure of orbits of vertices in intersections of invariance sets breaks down for  dihedral groups of order 12 and larger which is also the reason why the proof of \cref{thm:fixedPoint} only can work for small triangle groups. Compare also \cref{rmk:highern,ex:bicycle}.

As a next step we construct in \cref{sec:minimalSurface} a minimal surface spanned by a triangle whose sides are geodesics in the invariance sets $X_r, X_s, X_t$. From the fact that the pairwise intersections of two of these sets are nonempty we obtain three vertices $a,b$ and $c$ in the complex $X$ each contained in an intersection of a pair of the invariance sets. 
Any such three vertices $a$, $b$, $c$ hence span a geodesic triangle in $X$ which is fully contained in the set $\bigcup_{u\in\{r,s,t\}} X_u$ which in turn supports a minimal, systolic surface $S$ by a Lemma of Elsner \cite[Le. 4.2]{Elsner09_Flats} which we have restated here as \cref{lem:systolic-surface}. 

The proof of the \cref{thm:fixedPoint} is carried out in \cref{sec:mainProof} and is done by contradiction. 
So we suppose that $\Gamma$ acts {without stabilizing a simplex} on a systolic complex $X$.  
We then choose vertices $x,y,z$, with  
$x\in X_r\cap X_s$, $y\in X_s\cap X_t$ and $z\in X_t\cap X_r$, and the surface  $S$ minimally with respect to length of the bounding geodesics as well as surface area. \cref{prop:existence} then implies that one can choose $x,y$ and $z$ to be pairwise distinct.

Tedious examination of the surface's properties (in \cref{sec:minimalSurface,sec:notasimplex}) and detailed study of the defects at corners in \cref{sec:def-corner} and along the sides of $S$ in \cref{sec:def-sides-sum} will lead us to narrow down the possible structure of the surface to a small list of cases presented in \cref{fig:cases-mainProof}. 
Working through those cases one by one we arrive at a contradiction in each of them which implies that the action cannot be fixed point free. Hence \cref{thm:fixedPoint} {follows}. 

\subsection*{Organization of the paper}

Section~\ref{sec:intro} contains basic definitions and known properties of systolic complexes, which we use throughout the paper. 
For a larger class of groups generated by involutions we study the invariance sets $X_u$ in Section~\ref{sec:sub-complexes} and prove properties which might be of independent interest. 
The specific construction of the minimal surface $S$ is carried out in \cref{sec:minimalSurface}.  By \cref{prop:existence} S is not degenerate if the considered action does not fix a simplex. The fact that $S$ does not just consist of a single $2$-simplex is then shown in \cref{sec:notasimplex} that $S$ is not a single $2$-simplex. The defects of the corners and those along the sides of $S$ are then  studied in \cref{sec:def-corner,sec:def-sides-sum}. 
The proof of the fixed point theorem is finally carried out in Section~\ref{sec:mainProof}. 
Note, that we deal with the two groups simultaneously in most places and that we will say so explicitly, when we don't. Moreover all actions are assumed to be simplicial. 

\subsection*{Acknowledgment}

We wish to thank to Piotr Przytycki for helpful conversations and his comments on an earlier version of the manuscript. While preparing the first version of this paper we learned that Adam Wilks \cite{Wilks} independently showed that the reflection group $(2,4,5)$ is not systolic. The authors acknowledge (partial) funding by the Deutsche Forschungsgemeinschaft (DFG, German Research Foundation) – 281869850 (RTG 2229).

\section{Preliminaries}\label{sec:intro}

The main purpose of this section is to fix notation and to summarize properties of systolic complexes and minimal surfaces therein. 

\subsection{Systolicity}
In this subsection we quickly recall some basic definitions and properties of systolic complexes. 

Let $X$ be a simplicial complex. We call the elements of the $0$-skeleton $X^{(0)}$ of $X$ \emph{vertices} and its $1$-simplices \emph{edges}. So an edge $(a,b)$ is an unordered pair of vertices $a$ and $b$. 
A \emph{path of length $n$} in $X$ is a sequence $(v_0,v_1,v_2\dots v_n)$ of vertices of $X$ where $(v_{i}, v_{i+1})$ is an edge of $X$ for all $i \in \{0,1,\dots,n-1\}$. A \emph{closed path} in $X$ is a path where the first and the last vertex coincide. 
A closed path of length at least three with $v_i\neq v_j$ for all $i,j\in\{1, \ldots, n\}$  is called a \emph{cycle}.  
The \emph{distance} $d(x,y)$ of two vertices is the length of a shortest path from $x$ to $y$. We write $a\sim b$ for vertices $a$ and $b$ that are connected by an edge and will say that $a$ and $b$ are \emph{adjacent}. 

We say that a complex $X$ is \emph{$k$-large} for some $k\geq 4$ if every cycle {$C$} of length strictly less than $k$ has a \emph{diagonal}, that is an edge in $X$ connecting two nonconsecutive vertices on {$C$}. A complex $X$ is \emph{systolic} if it is connected, simply connected, and all its vertex links are $6$-large. 
Note that every $6$-large, connected and simply connected complex is systolic.

We will make repeated use of the combinatorial Gauss-Bonnet Theorem and hence recall its statement from \cite[Le 3.2]{Elsner09_Flats}. 

\begin{prop}[Combinatorial Gauss-Bonnet] \label{prop:GaussBonnet}
	Suppose $\Delta$ is a simplicial disc, then 
	\begin{align*}
	\sum_{v\in \Delta^{(0)}}{\text{def}(v)}=6.
	\end{align*}
	If in addition $\Delta$ is systolic, then the sum of the defects of its boundary vertices is at least $6$, with equality if and only if 
	$\Delta$ has no inner vertices with negative defects. 
\end{prop}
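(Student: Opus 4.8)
The plan is to deduce both parts from Euler's formula for a disc combined with a count of incidences between $2$-simplices and edges. Denote by $V_i$ and $V_b$ the numbers of interior and boundary vertices of $\Delta$, by $E_i$ and $E_b$ the numbers of interior and boundary edges, and by $F$ the number of $2$-simplices. Since $\Delta$ is a disc, its boundary is a single cycle, so $E_b = V_b$; and since each interior edge lies in exactly two $2$-simplices while each boundary edge lies in exactly one, counting the $2$-simplices containing a given edge yields $3F = 2E_i + E_b$. Substituting these two relations into $V - E + F = \chi(\Delta) = 1$ and solving gives $F = 2V_i + V_b - 2$. Recalling that $\mathrm{def}(v) = 6 - t(v)$ for an interior vertex and $\mathrm{def}(v) = 3 - t(v)$ for a boundary vertex, where $t(v)$ is the number of $2$-simplices containing $v$, and using $\sum_v t(v) = 3F$, one obtains $\sum_v \mathrm{def}(v) = 6V_i + 3V_b - 3F = 6$. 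This settles the first assertion, and it is a purely combinatorial computation.

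For the refinement, assume in addition that $\Delta$ is systolic. The key point I would establish is that every interior vertex $v$ satisfies $\mathrm{def}(v) \le 0$, equivalently $t(v) \ge 6$. Indeed, the $2$-simplices at an interior vertex $v$ form a fan, so $\mathrm{lk}(v)$ is a cycle of length $t(v)$ which carries no diagonal inside $\mathrm{lk}(v)$; moreover it can carry no diagonal inside $\Delta$ either, since a chord $v_i v_j$ of this cycle together with $v$ would, by flagness of $\Delta$, be the boundary of a $2$-simplex at $v$ lying outside the fan, in fact forcing a $3$-simplex, which is impossible in a $2$-dimensional disc. As a systolic complex is $6$-large, a chordless cycle in $\Delta$ cannot have length $4$ or $5$, while length $3$ is ruled out by the same flagness argument; hence $t(v) \ge 6$. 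Granting this, the first part yields $\sum_{v \in \partial\Delta} \mathrm{def}(v) = 6 - \sum_{v \text{ interior}} \mathrm{def}(v) \ge 6$, and since every interior summand is nonpositive, equality holds precisely when they all vanish, that is, precisely when $\Delta$ has no interior vertex of negative defect.

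The arithmetic in the first paragraph and the bookkeeping for the equality case are routine. The one place that deserves care is the link argument in the systolic case: one must correctly pass between the local fan of $2$-simplices around an interior vertex and the global $6$-largeness and flagness of $\Delta$, and in particular make precise why the absence of a diagonal of $\mathrm{lk}(v)$ \emph{inside $\Delta$} (not merely inside the link) is what forces $t(v) \ge 6$.
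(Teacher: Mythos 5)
Your proof is correct. The paper does not actually prove this proposition --- it is recalled verbatim from Elsner \cite[Le 3.2]{Elsner09_Flats} --- and your argument (Euler characteristic plus the double counts $E_b=V_b$, $3F=2E_i+E_b$, $\sum_v t(v)=3F$, followed by the link-of-an-interior-vertex argument for the systolic refinement) is precisely the standard proof found in that reference. One small remark on the link step: for $6$-largeness of $\mathrm{lk}(v)$ the only diagonals that matter are edges \emph{of the link}, i.e.\ pairs $v_i,v_j$ with $\{v,v_i,v_j\}$ a $2$-simplex of $\Delta$, and these are excluded simply because the triangles at $v$ form a fan; your additional exclusion of chords of the link cycle lying in $\Delta$ but outside $\mathrm{lk}(v)$ is harmless but not needed. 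Your separate treatment of $t(v)=3$ via flagness is the right way to handle the degenerate case that the paper's literal definition of $k$-largeness leaves ambiguous.
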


As already noticed by Wilks \cite[Thm.~2.7]{Wilks} the following is a consequence of Theorem C in work of Chepoi and Osajda \cite{ChepoiOsajda}.  

\begin{thm}[Little fixed point theorem]\label{thm:fixedpointthm} 
	For every simplicial action of a finite group $G$ on a systolic complex $X$ there exists a simplex $\sigma$ in $X$ which is invariant under $G$.  
\end{thm}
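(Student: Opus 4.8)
The plan is to reduce the statement to Theorem C of Chepoi--Osajda \cite{ChepoiOsajda}, which concerns actions of groups on systolic complexes and guarantees a fixed point in an appropriate sense whenever the orbits are bounded. First I would recall that a finite group $G$ acting simplicially on $X$ has, for any vertex $v$, a finite orbit $G\cdot v$, which is in particular a bounded subset of the vertex set of $X$ (with respect to the path metric $d$). Since $X$ is systolic, it is in particular a connected simply connected complex whose $1$-skeleton, equipped with $d$, is a geodesic metric space; systolic complexes are known to be contractible and to enjoy a strong convexity theory (``bicombing'', convexity of balls, SD-property), which is the setting in which the Chepoi--Osajda fixed point result is stated.

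Next I would quote the relevant consequence of \cite[Thm.~C]{ChepoiOsajda}: a group acting simplicially on a systolic complex $X$ with a bounded orbit fixes a (necessarily unique) minimal simplex, or more precisely stabilizes a simplex of $X$. Concretely, one takes the orbit $G\cdot v$, forms its combinatorial convex hull or the canonical ``median-like'' gate/center construction available in systolic complexes, and observes that this construction is canonical, hence $G$-invariant; the output is a simplex $\sigma$ with $G\cdot\sigma=\sigma$. This is exactly what \cite[Thm.~2.7]{Wilks} extracts, and I would simply cite both sources and assemble the one-line deduction: bounded orbit $\Rightarrow$ invariant simplex.

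The only genuine content beyond citation is checking the hypotheses of the cited theorem are met in our setting, namely (i) that the action is by simplicial automorphisms (assumed throughout), and (ii) that some orbit is bounded — immediate since $G$ is finite, so every orbit is finite and thus has finite diameter in $d$. There is no serious obstacle here; the ``hard work'' lives entirely inside \cite{ChepoiOsajda}, and for our purposes \cref{thm:fixedpointthm} is a black box that we invoke when establishing that $X_u\cap X_v\neq\varnothing$ (the finite dihedral subgroups $\langle u,v\rangle$ and $\langle u\rangle$ act on the systolic complexes $X$ and $X_u$). Thus the ``proof'' will consist of: state the finiteness/boundedness of orbits, cite \cite[Thm.~C]{ChepoiOsajda} and \cite[Thm.~2.7]{Wilks}, and conclude.
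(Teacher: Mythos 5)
Your proposal is correct and matches the paper's treatment exactly: the paper offers no independent proof but simply records this as a consequence of Theorem~C of \cite{ChepoiOsajda}, as observed in \cite[Thm.~2.7]{Wilks}, and your only added content --- that finiteness of $G$ gives bounded orbits, which is the hypothesis needed to invoke the cited result --- is the right (and routine) verification.
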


\subsection{Minimal surfaces}
In this section we define minimal surfaces and collect some of their properties. 

\begin{definition}[Surfaces]\label{def:surface}\label{Def_surface}
	A subcomplex $S \subseteq X$ which is isomorphic (as a simplicial complex) to a triangulated $2$-disc is called a \emph{surface}. The \emph{boundary} of the surface is the cycle $C$ corresponding to the boundary of the $2$-disc. 
	We say that $S$ is \emph{spanned by $C$}.
	The \emph{area} $\area=\area(S)$ of a surface (or 2-disc) $S$ is the number of triangles in $S$. 
	We say that $S$ is $\emph{minimal}$ if there is no other surface spanned by $C$ that has smaller area.
\end{definition}

We compare our notion of a surface with Elsner's definition in \cite{Elsner09_Flats} in the following remark. 

\begin{remark}[Comparison of definitions]
	Elsner defines in \cite{Elsner09_Flats} a \textit{surface spanning a cycle $\gamma$} as a simplicial map $S$ from a triangulated $2$-disc $\Delta$ to $X$ such that $S$ maps $\partial{\Delta}$ isomorphically onto $\gamma$. Furthermore Elsner calls a surface $S: \Delta \to X$ in a systolic complex X \textit{minimal} if $\Delta$ has minimal area among all surfaces extending $S|\partial{\Delta}$. Translated in the language of Elsner, Januszkiewicz and  \'Swi\k{a}tkowski show in \cite[Le 1.6]{Jan+Swi06} that for every cycle in a systolic complex $X$ there exists a surface which is injective on each simplex of the triangulation of $\Delta$. Elsner also proves existence of minimal surfaces and shows that their pre-images are systolic disks, compare \cite[Le. 4.2]{Elsner09_Flats}. It is not hard to see that minimality of the map combined with injectivity on the simplices imply that a minimal surface (in the sense of Elsner) is an injective map. Hence it makes sense to define surfaces as subcomplexes of a complex itself.
\end{remark}

In the following we always work with \cref{def:surface}. 
The following is a reformulation of \cite[Le. 4.2]{Elsner09_Flats} to our statement and holds with almost the same proof.

\begin{lemma}[Systolicity of minimal surfaces]\label{lem:systolic-surface}
	Every cycle $C$ in a systolic complex spans a minimal surface which will necessarily be systolic. 
\end{lemma}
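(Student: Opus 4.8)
The plan is to adapt Elsner's proof of \cite[Le.~4.2]{Elsner09_Flats} to the set-up of \cref{def:surface}. For existence I would invoke \cite[Le.~1.6]{Jan+Swi06}: the cycle $C$ bounds a \emph{surface in Elsner's sense}, that is, a simplicial map $S\colon\Delta\to X$ from a triangulated $2$-disc which restricts to an isomorphism $\partial\Delta\to C$ and is injective on every simplex. Since the area is a nonnegative integer, I can pick such an $S$ of least area among all simplicial maps extending $S|_{\partial\Delta}$; by the observation in the remark preceding \cref{def:surface}, such a least-area map is globally injective, so its image is a subcomplex of $X$ isomorphic to $\Delta$ and is a minimal surface spanned by $C$ in the sense of \cref{def:surface}. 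From now on I identify a minimal surface with its domain disc $\Delta$ and show that $\Delta$ must be systolic.

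As $\Delta$ is a triangulated $2$-disc it is connected and simply connected, so only the $6$-largeness of its vertex links is in question. The link of a boundary vertex is an arc and hence vacuously $6$-large; the link of an interior vertex $v$ is a single cycle $\gamma_v$, so the point is to show $\operatorname{length}(\gamma_v)\ge 6$ for every interior $v$. I would argue by contradiction: if $\operatorname{length}(\gamma_v)=n\le 5$ for some interior $v$, then the closed star of $v$ is the cone over $\gamma_v$, a triangulated $n$-gon consisting of exactly $n$ triangles, and, $S$ being injective, $S(\gamma_v)$ is a genuine cycle of length $n<6$ in $X$.

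The heart of the argument is then a surgery. Because $X$ is systolic, hence flag and $6$-large, the cycle $S(\gamma_v)$ of length $n<6$ can be filled inside $X$ by at most $n-2$ triangles: a diagonal (which exists by $6$-largeness) cuts it into a triangle and a shorter cycle, every $3$-cycle that occurs has three distinct pairwise adjacent vertices and therefore bounds a $2$-simplex of $X$ by flagness, and iterating produces a triangulated $n$-gon $D\subseteq X$ with boundary $S(\gamma_v)$ and $n-2$ triangles. Deleting the open star of $v$ turns $\Delta$ into an annulus whose inner boundary circle maps isomorphically onto $S(\gamma_v)$; gluing $D$ back along that circle produces a triangulated $2$-disc together with a simplicial map to $X$ still extending $S|_{\partial\Delta}$, and of area $\area(\Delta)-n+(n-2)<\area(\Delta)$. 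This contradicts minimality of $S$. Hence all interior vertices of $\Delta$ have link cycles of length at least $6$, every vertex link of $\Delta$ is $6$-large, and $\Delta$ is systolic.

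The step I expect to require the most care is this surgery: one has to check that cutting out the star of a low-valence interior vertex and regluing a cheaper filling of its link really returns a \emph{disc} (an annulus and a disc glued along a common boundary circle), that the filling $D$ is available with strictly fewer than $n$ triangles, and that the two flavours of minimality in play --- least area among simplicial maps (Elsner) versus least area among subcomplex-surfaces (\cref{def:surface}) --- match up; the first and last of these are exactly what the remark preceding \cref{def:surface} takes care of, and the middle one is the $6$-largeness of $X$.
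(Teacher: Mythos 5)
Your argument is correct and is essentially the proof the paper relies on: the paper itself only cites Elsner \cite[Le.~4.2]{Elsner09_Flats} (plus \cite[Le~1.6]{Jan+Swi06} and the injectivity remark) and says the statement "holds with almost the same proof," and your reconstruction --- existence and injectivity of a least-area filling, followed by the star-replacement surgery at an interior vertex of valence at most $5$ using $6$-largeness and flagness of $X$ --- is exactly that argument.
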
 

\subsection{Defect} A big technical piece of work in the proof of \cref{thm:fixedPoint} is the study of defects of vertices and sums of defects of vertices along bounding geodesics of the constructed minimal surface. It is defined as follows. 

\begin{definition}[Defect of vertices in a disc]\label{def:def-vertex}
	Let $\Delta$ be a simplicial $2$-disc. For any vertex $v \in \Delta$ the \emph{defect of $v$}  is defined by the following formula: 
	\begin{align*}
	\defect(v)=
	\begin{cases}
	6-|\{\text{triangles in }\Delta\text{ containing } v \}| & \text{if } v \notin \partial{\Delta}, \\
	3-|\{\text{triangles in }\Delta\text{ containing } v \}| & \text{if } v \in \partial{\Delta}.\\
	\end{cases}
	\end{align*} 
\end{definition}

\noindent 
One may think of the defect as a local way to measure how far a complex is from being systolic. Note that each inner vertex of a systolic disc $\Delta$ has nonpositive defect.  

\begin{definition}[Defect along a geodesic]\label{def:def-geodesic}
	Let $\Delta$ be a simplicial $2$-disc and $\gamma$ a path in the boundary of $\Delta$. 
	The \emph{defect along $\gamma$}, denoted by $\defect(\gamma)$, is defined to be the sum of the defects of all of its inner vertices, i.e. all vertices on $\gamma$ different from its endpoints.  If a path has no inner vertices its defect is defined to be  $0$. 
\end{definition}

The following lemma is an immediate consequence of \cite[Fact~3.1]{Elsner09_Flats} and its proof.

\begin{lemma}[Defects along geodesics in the boundary] \label{lem_sumdef}
	Let $\Delta$ be a systolic disc and $\gamma$  a geodesic in $\Delta$ which is contained in $\partial \Delta$. Then:
	\begin{enumerate}
		\item $\defect(v)\leq 1$ for any inner vertex $v$ of $\gamma$.
		\item for all inner vertices $v_i$ and $v_k$ of $\gamma$ with $\defect(v_i)=\defect(v_k)=1$ there exists $i<j<k$ such that $\defect(v_j)<0$.   
		\item $\defect(\gamma)\leq 1$.  
	\end{enumerate}
\end{lemma}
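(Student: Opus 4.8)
The plan is to extract the three assertions from a single structural fact about geodesics in the boundary of a systolic disc, namely Elsner's \cite[Fact~3.1]{Elsner09_Flats}: if $\gamma$ is a geodesic contained in $\partial\Delta$ and $v$ is an inner vertex of $\gamma$, then $v$ lies in at least two triangles of $\Delta$, and more precisely the triangles of $\Delta$ incident to $v$ together with the two boundary edges at $v$ form a fan whose ``opening angle'' is controlled by the geodesic condition. First I would recall why $\defect(v)\le 1$: an inner boundary vertex $v$ has $\defect(v)=3-k$ where $k$ is the number of triangles at $v$, so $\defect(v)\le 1$ is exactly the statement $k\ge 2$, and $k\ge 2$ follows because if $v$ were contained in a single triangle then the two boundary edges at $v$ would span that triangle and the vertex opposite to $v$ would be a diagonal shortcut, contradicting that $\gamma$ is geodesic. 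This gives part~(1).

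For part~(3) I would argue that the defect along $\gamma$ cannot exceed $1$. The idea is that between any two consecutive inner vertices of defect exactly $1$ (i.e.\ vertices lying in exactly two triangles) the geodesic must ``turn inward,'' forcing an intermediate vertex that lies in at least four triangles, hence has negative defect; summing the pattern $+1, (\text{something} \le 0), +1, \dots$ along $\gamma$ shows the total is at most $1$. Concretely, I would look at the link of $\gamma$ inside $\Delta$: it is a path, and the geodesic condition says consecutive inner vertices of $\gamma$ cannot both be ``straight with only two triangles'' without something in between bending back, since otherwise one produces a cycle of length $\le 5$ in a vertex link (or a path in $\Delta$ of the same endpoints but strictly shorter), contradicting $6$-largeness of links or geodesicity. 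This simultaneously yields part~(2): for inner vertices $v_i, v_k$ with $\defect(v_i)=\defect(v_k)=1$ there is an index $i<j<k$ with $\defect(v_j)<0$, which is precisely the ``bending back'' phenomenon; and then part~(3) follows by a telescoping/pairing argument — each $+1$ contribution is compensated by a strictly negative contribution occurring later along $\gamma$, so the running sum never rises above $1$.

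The cleanest route is probably to prove (1) and (2) first directly from the local picture at inner vertices of a boundary geodesic, and then deduce (3) as a formal consequence: order the inner vertices of $\gamma$, let $v_{i_1}<\dots<v_{i_m}$ be those with defect $1$; by (2), strictly between consecutive such vertices there is a vertex of negative (hence $\le -1$) defect, and also before $v_{i_1}$ or after $v_{i_m}$ one such negative vertex is not needed, so the sum of the $+1$'s is at most one more than the sum of the negative contributions in absolute value — giving $\defect(\gamma)\le 1$. The main obstacle I anticipate is making the ``bending back forces a negative-defect vertex'' step rigorous: one must translate the metric statement that $\gamma$ is geodesic in $\Delta$ into a combinatorial statement about the number of triangles at intermediate vertices, which is exactly the content buried in \cite[Fact~3.1]{Elsner09_Flats} and its proof, so I would lean on that fact rather than re-derive it, citing it carefully and checking that the hypothesis there (geodesic in the $1$-skeleton of $\Delta$, lying in $\partial\Delta$) matches ours.
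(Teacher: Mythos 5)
Your proposal is correct and follows essentially the same route as the paper, which simply records this lemma as an immediate consequence of \cite[Fact~3.1]{Elsner09_Flats} and its proof; your extra detail (the fan/shortcut argument showing that two defect-$1$ inner vertices without an intervening negative-defect vertex yield a strictly shorter path with the same endpoints, hence contradict geodesicity, and the interleaving count deducing (3) from (1) and (2)) is exactly the content of Elsner's argument. No gaps.
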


Item two in the lemma says that if there are two inner vertices of defect one on a geodesic $\gamma$, then they are separated by an inner vertex of negative defect.


\section{Invariance sets: Subcomplexes stable under involutions}\label{sec:sub-complexes}

In this section we investigate the behavior of certain subcomplexes that are almost fixed by an involution. The main result in this section which plays a crucial role in the proof of \cref{thm:fixedPoint} is the bicycle property stated in \cref{prop:H}. 

\subsection{Invariance sets}
We define here for arbitrary simplicial involutions their invariance sets and discuss some of their general properties. 
We emphasize that all results in the present section 
hold true for arbitrary simplicial involutions of a systolic complex $X$. 

Here and in the following we denote the image of a simplex $\Sigma\in X$ under a simplicial automorphism $u$ by  $\Sigma^u$ and the image under the product $u \cdot v\in\Gamma$ by $\Sigma ^{vu}$. Note that simplices which are (setwise) fixed under $u$ are contained in $X_u$. 

\begin{definition}[Invariance sets]
	For a  simplicial involution  $u$ on a systolic complex $X$ we define its \emph{invariance set} $X_u$ to be the flag simplicial complex in $X$ spanned by those vertices $x$ in $X^{(0)}$ for which either $x^u=x$ or $x^u \sim x$. 
\end{definition}

\begin{lemma}[u-invariant simplices]\label{lem:u-stableSplx}\label{lemma:2}
	Let $u$ be a simplicial involution of a systolic complex $X$. Suppose $a_1, \ldots, a_k$ is a clique in $X_u$. Then $a_i\sim a_j^u$ for all $1\leq i,j\leq k$. In other words, the vertices $a_1, \ldots, a_k, a_1^u, \ldots, a_k^u$ span a $u$-invariant simplex in $X$.  
	In particular, for any pair of adjacent vertices $a\sim b$ in $X_u$ one has $a^u \sim b$ and $a\sim b^u$. 
\end{lemma}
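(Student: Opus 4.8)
The claim is local: given a clique $a_1,\dots,a_k$ in $X_u$, we must show $a_i \sim a_j^u$ for all $i,j$. The plan is to reduce to the case of two adjacent vertices and then analyze a short cycle. First I would dispose of the diagonal entries: for each $i$, by definition of $X_u$ the vertex $a_i$ satisfies either $a_i^u = a_i$ (so trivially $a_i \sim a_i^u$ in the degenerate sense, or rather $a_i = a_i^u$) or $a_i^u \sim a_i$; in either case the pair $\{a_i, a_i^u\}$ spans a simplex (a vertex or an edge). So the content is in the off-diagonal pairs $i \neq j$.

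Fix $i \neq j$ and set $a = a_i$, $b = a_j$, so $a \sim b$ in $X_u$. I want to show $a \sim b^u$ (and by symmetry $a^u \sim b$). Consider the vertices $a, a^u, b^u, b$. We know $a \sim b$ by hypothesis; applying the automorphism $u$ gives $a^u \sim b^u$. We also know $a \sim a^u$ (or $a = a^u$) and $b \sim b^u$ (or $b = b^u$) from membership in $X_u$. The degenerate cases where $a = a^u$ or $b = b^u$ are easy: e.g. if $a = a^u$ then $a \sim b$ gives $a^u \sim b$, and $a \sim b^u$ follows by applying $u$ to $a^u \sim b$, i.e. $a \sim b^u$. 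So assume $a \neq a^u$ and $b \neq b^u$. Then $(a, a^u, b^u, b)$ is a closed path of length $4$. If all four vertices are distinct this is a $4$-cycle, and since $X$ is systolic (hence $5$-large, hence in particular every $4$-cycle has a diagonal) it has a diagonal: either $a \sim b^u$ or $a^u \sim b$. I would then need to argue that one diagonal forces the other — this is where $u$-equivariance comes in: if $a \sim b^u$, apply $u$ to get $a^u \sim b$, so both hold, giving the full claim for the pair $(i,j)$. If instead $a^u \sim b$, apply $u$ to get $a \sim b^u$. Either way both diagonals are present. The remaining subcase is when the four vertices are not all distinct, e.g. $a = b^u$ (equivalently $b = a^u$): then $a \sim b^u$ holds tautologically, and applying $u$ gives $a^u \sim b$; the case $a^u = b^u$ is impossible since $u$ is injective and $a \neq b$, and $a = b$ is excluded since they are distinct clique vertices.

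Once $a_i \sim a_j^u$ is established for all $i,j$ (including $i = j$), the ``in other words'' conclusion is immediate: the set $\{a_1,\dots,a_k,a_1^u,\dots,a_k^u\}$ is pairwise adjacent — $a_i \sim a_j$ by hypothesis, $a_i^u \sim a_j^u$ by equivariance, $a_i \sim a_j^u$ by what we just proved — so in the flag complex $X$ it spans a simplex, and this simplex is setwise fixed by $u$ since $u$ permutes its vertex set. The ``in particular'' statement is just the case $k = 2$.

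\textbf{Main obstacle.} I expect the only real subtlety to be the bookkeeping of degenerate configurations: when $a^u = a$, when $b^u = b$, and when the four vertices $a, a^u, b^u, b$ collide (so that the would-be $4$-cycle degenerates and one cannot directly invoke $5$-largeness). Each such case is elementary but must be checked, and the key recurring trick throughout is that applying the involution $u$ to any adjacency turns a half-statement ($a \sim b^u$) into its mirror ($a^u \sim b$), so that $5$-largeness only needs to supply one of the two diagonals. I should also make sure the complex being flag is used (to pass from pairwise adjacency to spanning a simplex) — this is built into the definition of $X_u$ and of systolic complexes, so no extra hypothesis is needed.
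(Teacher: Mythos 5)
Your proposal is correct and follows essentially the same route as the paper: reduce to a pair of adjacent vertices, form the closed path $(a,a^u,b^u,b)$, get one diagonal from largeness of the systolic complex, and use $u$-equivariance to obtain the other; your treatment of the degenerate collisions (e.g.\ $a=b^u$) is slightly more explicit than the paper's, which is a minor plus rather than a difference in method.
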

\begin{proof}
	Recall that either $a_i^u=a_i$ or $a_i^u\sim a_i$ by definition of $X_u$.
	Suppose first that $k=2$.  We want to conclude that then $a_1\sim a_2^u$. This is clear if one of the vertices $a_1, a_2$ is fixed by $u$, as the action of $u$ on $X$ is simplicial.  
	We assume for a contradiction that $a_i \neq a_i^u$ for both $i=1$ and $2$. Then $(a_1,a_1^u,a_2^u,a_2)$ is a 4-cycle which has a diagonal as $X$ is systolic. Since $u$ acts simplicially on $X$, the existence of one of the diagonals implies the existence of the other. Therefore both diagonals are contained in $X$ and the vertices $a_1, a_2, a_1^u, a_2^u$  span a $u$-invariant simplex in $X$. The rest of the statement follows by induction on $k$ and the fact that we can apply the first induction step to any pair $a_i, a_j$. 
\end{proof}

The next lemma shows that commuting involutions give rise to a simplex that is stable under their span. 

\begin{lemma}[$\langle u,v\rangle$-invariant simplices]\label{lem:commuting uv}
	Suppose $u,v$ are commuting involutions on $X$. Then for any $x\in X_u\cap X_v$ the set $x^{\langle u,v\rangle}$ spans a simplex that is invariant under $u$ and $v$. 
\end{lemma}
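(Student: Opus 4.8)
The plan is to deduce this from the two-vertex case of \cref{lem:u-stableSplx} together with the commutation relation, so there is not much to do. Since $u$ and $v$ are commuting involutions, the orbit $x^{\langle u,v\rangle}$ consists of at most the four vertices $x$, $x^u$, $x^v$ and $(x^u)^v=(x^v)^u$, and both $u$ and $v$ act on this set as (possibly trivial) transpositions: $u$ interchanges $x\leftrightarrow x^u$ and $x^v\leftrightarrow (x^v)^u$, while $v$ interchanges $x\leftrightarrow x^v$ and $x^u\leftrightarrow (x^u)^v$. All of this uses only $uv=vu$ and $u^2=v^2=\mathrm{id}$.

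The first real step is to check that the whole orbit lies in $X_u$. By definition $x\in X_u$, so $x^u=x$ or $x^u\sim x$; applying the simplicial map $v$ and using $(x^u)^v=(x^v)^u$ gives $(x^v)^u=x^v$ or $(x^v)^u\sim x^v$, i.e. $x^v\in X_u$, and likewise $(x^u)^v\in X_u$. Moreover $x\in X_v$ gives $x^v=x$ or $x^v\sim x$. Hence $\{x,x^v\}$ is a clique in $X_u$: it is a single vertex if $x^v=x$, and an edge otherwise.

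Now I would apply \cref{lem:u-stableSplx} to this clique (with $k\le 2$): it yields that the vertices $x,\ x^v,\ x^u,\ (x^v)^u$ span a $u$-invariant simplex $\sigma$ of $X$. By the first paragraph the vertex set of $\sigma$ is precisely the orbit $x^{\langle u,v\rangle}$; since $v$ permutes this set, $\sigma^v$ is a simplex with the same vertex set as $\sigma$, hence $\sigma^v=\sigma$. Thus $\sigma$ is invariant under both $u$ and $v$, and therefore under all of $\langle u,v\rangle$, which is what we wanted.

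I do not expect a genuine obstacle here; the only care needed is the bookkeeping in the degenerate cases where two or more of $x,x^u,x^v,(x^u)^v$ coincide, but these only shrink the clique $\{x,x^v\}$ and make \cref{lem:u-stableSplx} easier to apply. (If $\langle u,v\rangle$ happens to fix $x$ outright the statement is trivial.)
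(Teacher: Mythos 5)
Your proof is correct and is essentially the paper's argument in different packaging: the paper directly observes that the orbit forms (at most) a $4$-cycle $(x,x^u,x^{uv},x^v)$ and extracts both diagonals from $6$-largeness, whereas you first check that $x^v\in X_u$ and then invoke \cref{lem:u-stableSplx} on the clique $\{x,x^v\}$ --- whose proof is precisely that same $4$-cycle argument. Your supporting observations (commutativity places the whole orbit inside $X_u$, and $v$ permutes the vertex set so the resulting simplex is automatically $v$-invariant) are accurate, so the reduction is legitimate and arguably cleaner than repeating the diagonal argument.
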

\begin{proof}
	It is clear that if they span a simplex it must be stable under both $u$ and $v$. 
	As $x\in X_u\cap X_v$ we have that $x\sim x^u$ and $x\sim x^v$. As the action is simplicial we also have that $x^{uv}\sim x^v$ and that $x^{vu}\sim x^u$. But then from the fact that $u$ and $v$ commute we obtain $x^{uv}=x^{vu}$ and the vertices in the orbit $x^{\langle u,v\rangle}$ either form an edge, a triangle or a $4$-cycle. In case they form a  $4$-cycle there must exist at least one and hence both diagonals.  
\end{proof}

\begin{lemma}[u-invariant mid-simplex]\label{lem:dist_2_simplex}
	Let $u$ be a simplicial involution on $X$. For any $x\in X^{(0)}$ with $d(x,x^u)=2$ the vertices adjacent to both $x$ and $x^u$ span a $u$-invariant simplex.  
\end{lemma}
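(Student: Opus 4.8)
The plan is to mimic the proof of \cref{lem:u-stableSplx}, only now using that $x$ and $x^u$ are not adjacent but at distance $2$. Fix $x\in X^{(0)}$ with $d(x,x^u)=2$ and let $a_1,\dots,a_k$ be the vertices adjacent to both $x$ and $x^u$; note this set is nonempty since $d(x,x^u)=2$. The goal is to show these $a_i$ form a clique and that each $a_i$ is fixed by $u$, so that they span a $u$-invariant simplex. First I would observe that $u$ maps the set $\{a_1,\dots,a_k\}$ to itself: if $a_i\sim x$ and $a_i\sim x^u$, then applying the simplicial involution $u$ gives $a_i^u\sim x^u$ and $a_i^u\sim x^{u^2}=x$, so $a_i^u$ is again adjacent to both $x$ and $x^u$, hence $a_i^u=a_j$ for some $j$.

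Next I would show each $a_i$ is actually fixed by $u$. Suppose not, say $a_i^u=a_j$ with $i\neq j$. Then $(x, a_i, x^u, a_j)$ is a closed path of length $4$; since $x\neq x^u$ and $a_i\neq a_j$, and since $x\not\sim x^u$ (as $d(x,x^u)=2$), this would become a genuine $4$-cycle unless $a_i\sim a_j$. The systolicity of $X$ forces a diagonal of this $4$-cycle, and since $x\not\sim x^u$, that diagonal must be $a_i\sim a_j$. But then consider the triangle $x, a_i, a_j$: applying $u$ sends it to $x^u, a_j, a_i$, so $x, x^u, a_i, a_j$ span a subcomplex on four vertices. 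One checks that the edges $a_i\sim a_j$, $x\sim a_i$, $x\sim a_j$, $x^u\sim a_i$, $x^u\sim a_j$ are all present, so the only missing edge among these four vertices is $x\sim x^u$; these five edges form the cycle $(x,a_i,x^u,a_j)$ with diagonal $a_i a_j$, i.e.\ two triangles glued along $a_ia_j$, which is a triangulated $2$-disc whose boundary is a $3$-cycle through $x$, contradicting nothing by itself --- so instead I would derive the contradiction more directly: the four vertices $x, x^u, a_i, a_j$ with the six possible edges minus $xx^u$ form a configuration in which $\{x, a_i, a_j\}$ and $\{x^u, a_i, a_j\}$ are triangles, hence $a_i a_j x$ and $a_i a_j x^u$ are $2$-simplices sharing the edge $a_ia_j$. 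This means $x$ and $x^u$ both lie in the link of the edge $(a_i,a_j)$; but in a systolic complex the link of every edge is $6$-large, in particular connected, and more to the point one uses that a minimal-length obstruction of this type cannot occur --- here the cleanest route is: the $4$-cycle $(x, a_i, x^u, a_j)$ together with diagonal $a_ia_j$ already exhibits a $u$-invariant $1$-simplex $(a_i,a_j)$, and since $x, x^u$ are distinct vertices of its link with $x^u = x \cdot u$, we may instead replace the whole analysis by an appeal to \cref{lem:u-stableSplx} applied inside $\mathrm{lk}(a_i,a_j)$. I expect I will want to streamline this: the honest short argument is that $a_i\sim a_j$ plus $a_i^u = a_j$ makes $(a_i, a_j)$ a $u$-invariant edge, and then $x, x^u$ are both adjacent to both endpoints, so by \cref{lemma:2} applied to the clique $\{a_i\}\subseteq X_u$ --- wait, $a_i$ need not be in $X_u$ a priori. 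So the genuinely clean contradiction is: from $a_i\sim a_j$ and $a_i\sim x\sim a_j$ we get that $x$ is adjacent to the $u$-invariant edge $(a_i,a_j)$, forcing $x^u$ adjacent to it too, giving the $2$-disc above whose only non-edge is $xx^u$; projecting to $X_u$ one shows $a_i, a_j\in X_u$, so $a_i\sim a_j^u = a_i$ by \cref{lemma:2}, i.e.\ $a_j = a_i$, contradiction.

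Once all $a_i$ are fixed by $u$, it remains to see they form a clique. For any pair $a_i, a_j$ with $i\neq j$: both are adjacent to $x$ and both fixed by $u$, so $a_i, a_j\in X_u$ by definition of the invariance set, and they are vertices of the link of $x$. The closed path $(a_i, x, a_j, x^u)$ has length $4$; its vertices are distinct (since $a_i\neq a_j$ and $x\neq x^u$) and $x\not\sim x^u$, so systolicity forces the diagonal $a_i\sim a_j$. Thus $\{a_1,\dots,a_k\}$ is a clique of vertices in $X_u$, all fixed by $u$, hence they span a $u$-invariant simplex.

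\textbf{Main obstacle.} The delicate point is the middle step --- ruling out $a_i^u = a_j$ with $i \neq j$ --- because a naive $4$-cycle argument only yields the diagonal $a_i \sim a_j$ (since $x \not\sim x^u$), which is consistent rather than contradictory; one must then do a little more work (as sketched above, descending to the invariance set or to an edge link) to extract an actual contradiction. I would present that step carefully. The first and last steps ($u$ preserves the set of common neighbors, and common neighbors fixed by $u$ form a clique) are routine systolicity-of-$4$-cycles arguments of exactly the type already used in \cref{lemma:2}.
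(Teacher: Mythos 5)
Your first and last steps are exactly the paper's proof: $u$ permutes the set of common neighbours of $x$ and $x^u$ (since it swaps $x$ and $x^u$), and any two common neighbours $a_i,a_j$ lie on the $4$-cycle $(a_i,x,a_j,x^u)$ whose only possible diagonal is $(a_i,a_j)$ because $x\nsim x^u$; hence the common neighbours form a clique, span a simplex by flagness, and that simplex is setwise $u$-invariant. That is all the lemma claims --- ``$u$-invariant simplex'' means setwise stabilized, not pointwise fixed --- and indeed the paper's one-line justification is ``it is stabilized by $u$ since $u$ preserves distances.''

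Your middle step, where you try to show each $a_i$ is \emph{fixed} by $u$, is therefore unnecessary, and moreover it is false and your derivation of a contradiction does not work. Suppose $a_i^u=a_j$ with $i\neq j$. As you correctly deduce, the $4$-cycle forces $a_i\sim a_j$, so $a_i\sim a_i^u$ and both $a_i,a_j$ lie in $X_u$. But applying \cref{lem:u-stableSplx} to the clique $\{a_i,a_j\}$ then yields the degenerate conclusion ``$a_i\sim a_j^u=a_i$,'' which correctly read just says that $\{a_i,a_j,a_i^u,a_j^u\}=\{a_i,a_j\}$ spans a $u$-invariant simplex (namely the edge $(a_i,a_j)$); it does not force $a_i=a_j$. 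The configuration in which $u$ swaps two adjacent common neighbours of $x$ and $x^u$ is perfectly consistent and does occur; nothing in systolicity rules it out. So you should simply delete that step (and the phrase ``both fixed by $u$'' in your final paragraph, which the clique argument never actually uses): what remains is a complete and correct proof, identical in substance to the paper's.
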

\begin{proof}
	Let $a$ and $b$ two vertices which are simultaneously adjacent to $x$ and $x^u$. Then $(a,x,b,x^u)$ is a $4$-cycle which has a diagonal by $6$-largeness. As $x \nsim x^u$, the vertices $a$ and $b$ are connected by an edge. Since $X$ is flag, $(a,b,x)$ and $(a,b,x^u)$ span a simplex. With the same argument all the common neighbors of $x$ and $x^u$ span a simplex. It is stabilized by $u$ since $u$ preserves distances. 
\end{proof}

\begin{prop}[Geodesics in $X_u$]\label{prop:1}
	Any two vertices $x$ and $y$ in $X_u$ are connected by a (1-skeleton) geodesic in $X$ which is contained in $X_u$.
\end{prop}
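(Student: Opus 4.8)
The plan is to take an arbitrary $1$-skeleton geodesic $\gamma = (x = v_0, v_1, \ldots, v_n = y)$ between $x$ and $y$ in $X$ (recall $X$ is connected, so one exists) and modify it, if necessary, so that all of its vertices land in $X_u$, without increasing its length. The key observation is that $X_u$ is, by definition, the full subcomplex on the vertex set $\{w \in X^{(0)} : w^u = w \text{ or } w^u \sim w\}$; so a vertex $v$ on $\gamma$ fails to lie in $X_u$ precisely when $d(v, v^u) \geq 2$. We want to replace any such ``bad'' vertex by a ``good'' one, using the $u$-invariant simplices produced by the lemmas above.

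First I would handle the simplest bad situation: a single bad vertex $v = v_i$ with good neighbours $v_{i-1}, v_{i+1}$ on $\gamma$. The idea is to apply $u$ and look at $v^u$ together with $v_{i-1}, v_{i+1}$. Since $v_{i\pm 1} \in X_u$, we have $v_{i\pm 1}^u \sim v_{i \pm 1}$ or $v_{i\pm 1}^u = v_{i\pm1}$; combined with $v_{i-1}\sim v_i \sim v_{i+1}$ and simpliciality of $u$, one can try to show that $v_{i-1}$ and $v_{i+1}$ have a common neighbour $w$ which lies in $X_u$ (a candidate being a vertex on a short path realizing $d(v_{i-1}, v_{i+1}) \le 2$, which can be bounded using a short cycle through $v_i$ and $v_i^u$ and $6$-largeness, much as in \cref{lem:dist_2_simplex}). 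Replacing $v_i$ by $w$ keeps the length the same and, since $\gamma$ was a geodesic, forces $d(v_{i-1}, v_{i+1}) = 2$. More systematically, I expect the cleanest route is: pick $\gamma$ to be a geodesic that is \emph{closest to $X_u$} in the sense that it minimizes $\sum_i d(v_i, v_i^u)$ (or minimizes the number of bad vertices, then some secondary invariant), and derive a contradiction from the existence of any bad vertex by exhibiting a length-preserving modification that strictly decreases this quantity. The $u$-invariant simplices from \cref{lemma:2}, \cref{lem:commuting uv} (not needed here) and especially \cref{lem:dist_2_simplex} are the tools that let one find the replacement vertices and check they are good.

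The main obstacle I anticipate is the case of a \emph{block} of consecutive bad vertices $v_i, v_{i+1}, \ldots, v_{i+k}$ flanked by good vertices $v_{i-1}, v_{i+k+1}$: one cannot fix them one at a time, and one has to argue globally about the strip of triangles between $\gamma$ and its image $\gamma^u$. Here the natural approach is to consider the cycle formed by the bad portion of $\gamma$, its $u$-image, and the two short connecting paths at the ends (whose existence/shortness comes from $v_{i-1}, v_{i+k+1} \in X_u$), span a minimal surface $S$ on it via \cref{lem:systolic-surface}, and use systolicity of $S$ together with the combinatorial Gauss--Bonnet theorem \cref{prop:GaussBonnet} and the defect estimates along geodesics \cref{lem_sumdef} to locate vertices in $S$ that are fixed or nearly fixed by $u$ — i.e.\ that lie in $X_u$ — and that can be assembled into a geodesic from $x$ to $y$ of the same length inside $X_u$. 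An alternative, possibly lighter, route: induct on $d(x,y)$, using the one-bad-vertex fix as the engine, by first routing from $x$ to a good neighbour of $v_1$, etc. I would try the local exchange/induction argument first and fall back on the minimal-surface argument only if the bookkeeping for long bad blocks does not close cleanly; in either case the heart of the matter is controlling what happens when a long stretch of the geodesic is moved far by $u$.
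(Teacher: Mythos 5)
Your ``fallback'' route is essentially the paper's actual proof: one inducts on $d(x,y)$, which immediately reduces to the case where \emph{every} interior vertex of $\gamma$ fails to lie in $X_u$ (otherwise split at a good interior vertex and apply the induction hypothesis), and then spans a minimal surface $S$ on the cycle formed by $\gamma$, $\gamma^u$ and the length-$\le 1$ connections at $x$ and $y$, using \cref{lem:systolic-surface}, \cref{prop:GaussBonnet} and \cref{lem_sumdef}. So you have identified the right strategy, but your sketch stops exactly where the argument has to produce something. The missing step is the corner analysis: Gauss--Bonnet gives total boundary defect $\ge 6$, the defect along each of the two geodesic sides is $\le 1$, so the corners $x,x^u,y,y^u$ carry defect $\ge 4$ with each contributing at most $2$; one must then check, case by case on how this defect is distributed, that either (i) the triangles of $S$ at a corner force $x_1=x_1^u$ or $x_1\sim x_1^u$, i.e.\ $x_1\in X_u$, contradicting the reduction, or (ii) there is a vertex $x'$ of $S$ adjacent to both $x_1$ and $x_1^u$, whence $x'\in X_u$ by \cref{lem:dist_2_simplex}, and then either $d(x',y)<n$ (handled by induction) or replacing $x$ by $x'$ produces a strictly smaller surface. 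That final contradiction requires the extremal object to be chosen as the configuration $(x,y,\gamma)$ minimizing the \emph{area of $S$}; your proposed invariant $\sum_i d(v_i,v_i^u)$ (or the count of bad vertices) is not visibly decreased by this replacement, so it would have to be exchanged for the area.

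Your first route (local exchange of a single bad vertex) is unlikely to close even in its simplest instance: with $v_{i\pm1}\in X_u$ and $v_i$ bad, the relevant closed path $(v_{i-1},v_i,v_{i+1},v_{i+1}^u,v_i^u,v_{i-1}^u)$ generically has length $6$, so $6$-largeness forces no diagonal and you get no common neighbour of $v_{i-1}$ and $v_{i+1}$ in $X_u$ for free. This is precisely why the paper does not attempt a vertex-by-vertex repair and goes directly to the surface argument.
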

\begin{proof}
	The proof is by induction on the distance $n$ of $x$ and $y$. The statement is clear if $n=0$ or $1$. 
	
	Let $x, y$ in $X_u$ be at distance $n$ in $X$. As $X$ is connected there exists a geodesic $\gamma= (x_0 , x_1, \dots ,x_n)$ from $x=x_0$ to $y=x_n$ in $X$. We want to show that $\gamma$ can be chosen in $X_u$. 
	
	If $x_i = x_i^u$ or $x_i \sim x_i^u$ for some $i \in \{1, \dots, n-1\}$, then $x_i \in X_u$ and we can find via the induction hypothesis a geodesic in $X_u$ connecting $x$ and $y$. Hence we assume that no $x_i$ is contained in $X_u$ for all $i \in \{1,\dots,n-1\}$. Let $S$ be a minimal surface spanned on the two geodesics $\gamma$ and $\gamma^u$. We choose $x$, $y$ and  the geodesic $\gamma$ connecting them in such a way that the area of $S$ is minimal.
	
	By Lemma~\ref{lem:systolic-surface} the surface  $S$ is systolic and hence the sum of the defects at its boundary vertices is $\geq 6$ by \cref{prop:GaussBonnet}. Lemma~\ref{lem_sumdef} then implies that $\defect(\gamma)\leq 1$ and $\defect(\gamma^u)\leq 1$. Let $D\define \sum_{v\in\{x, x^u, y, y^u\}} \defect(v)$ be the sum of the defects of $x, x^u, y, y^u$ in $S$, where we omit possible repetition. From what we have argued $D\geq 4$. 
	
	\emph{{Case 1:} $x\neq x^u$ and $\defect(x)+\defect(x^u)\geq 3$.}\\
	\noindent 
	In this case $\defect(x)+\defect(x^u)= 3$ as otherwise $x_1=x_1^u$ which contradicts the assumption that $x_i\notin X_u$ for all $i$. Therefore one of the vertices $x, x^u$ has defect $1$ and the other defect $2$. Hence $x_1 \sim x_1^u$, i.e. $x_1 \in X_u$, which is a contradiction.
	
	\begin{figure}[h!]
		\centering
		\includegraphics{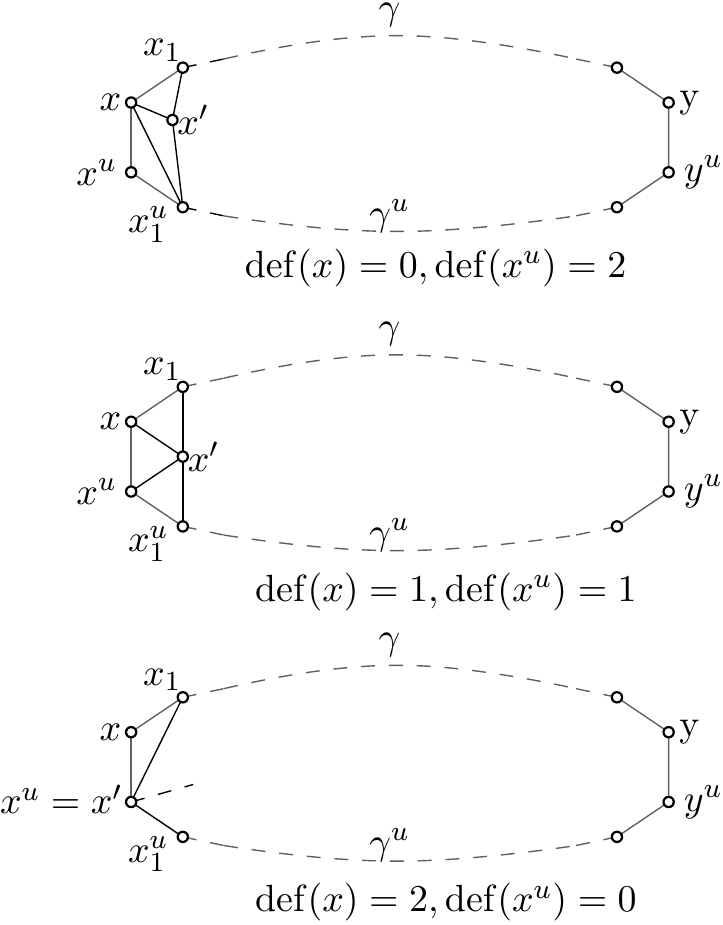}
		\caption{This illustrates Case 2 of the proof of Proposition~\ref{prop:1}.}
		\label{fig:ConnSets_1}
	\end{figure}

	\emph{Case 2: $x\neq x^u$ and $\defect(x)+\defect(x^u)=2$.}\\
	\noindent
	We will obtain that $\dd(x_1,x_1^u) =2$ and that there exists some $x'$ in $S$ adjacent to $x$, $x_1$ and $x_1^u$. This is clearly fulfilled if $x$ has defect $0$ and $x^u$ has defect $2$ or in case that $x$ and $x^u$ have both defect $1$.  In the remaining case, vertex $x$ has defect $2$ and $x^u$ has defect $0$. Then $x^u$ is adjacent to $x$, $x_1$ and $x_1^u$. We illustrate these situations in \cref{fig:ConnSets_1}. 
	
	By Lemma~\ref{lem:dist_2_simplex} the vertices adjacent to both $x_1$ and $x_1^u$ span a $u$-stabilized simplex.
	In particular $x'$ is contained in $X_u$. If $\dd(x',y) < n$, we could find by induction hypothesis a geodesic in $X_u$ connecting $x$ and $y$ via $x'$ which contradicts minimality of $S$. Thus $\dd(x',y) = n$. Then $\gamma' \define (x', x_1, \dots ,x_n \define y)$ is a geodesic connecting $x'$ and $y$ and the minimal surface spanned by $\gamma'$ and $\gamma'^u$ does not contain vertex $x$, i.e. is properly contained in $S$.  Replacing $x$ by $x'$, we obtain a minimal surface with a smaller area than $S$, contradicting the way we have chosen $S$. 
	
	\emph{{Case 3:}} $x=x^u$.\\
	\noindent 
	In this case $x$ has defect at most one, as otherwise $x_1 \sim x_1^u$ or $x_1=x^u_1$ contradicting the fact that $x_1 \notin X_u$. So the defects at $y$ and $y^u$ sum up to at least three and since any vertex on the boundary of $S$ has defect at most two, the vertices $y$ and $y^u$ are distinct. Thus, switching the roles of $x$ and $y$, we are in case $1$.
	
	\emph{{ Case 4:} $x\neq x^u$ and $\defect(x)+\defect(x^u)\leq 2$. }\\
	\noindent 
	This case is covered by cases (1)--(3) by switching the roles of $x$ and $y$. 
	
	Summarizing the observation of the four cases we obtain that all of the $x_i$ must be contained in $X_u$ and the assertion follows. 
\end{proof}

\begin{prop}[Properties of $X_u$]\label{prop:X_s}\label{prop:XuProperties}
	For any $u\in \{r,s,t\}$ the complex $X_u$ is a systolic, isometrically embedded, full subcomplex of $X$ and its maximal simplices are stabilized by $u$.
\end{prop}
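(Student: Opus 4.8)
The plan is to establish the four asserted properties of $X_u$ in turn, leaning on the lemmas already proved in this section. First I would show that $X_u$ is \emph{full}: by definition $X_u$ is the flag complex spanned by the vertex set $\{x : x^u = x \text{ or } x^u \sim x\}$, so a collection of these vertices spanning a simplex in $X$ automatically spans one in $X_u$; fullness is thus immediate from the definition. Next, for the statement about \emph{maximal simplices}, let $\sigma$ be a maximal simplex of $X_u$ with vertices $a_1,\dots,a_k$. By \cref{lem:u-stableSplx} the vertices $a_1,\dots,a_k,a_1^u,\dots,a_k^u$ span a $u$-invariant simplex $\tau$ of $X$; all of these vertices lie in $X_u$ (since simplices setwise fixed by $u$ are contained in $X_u$, and each $a_i^u$ is adjacent to or equal to $a_i$, hence in $X_u$). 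So $\tau$ is a simplex of $X_u$ containing $\sigma$, and maximality forces $\tau = \sigma$; in particular $\sigma$ is $u$-invariant.

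The heart of the proposition is the claim that $X_u$ is \emph{isometrically embedded} and \emph{systolic}. For isometric embedding, one inequality is trivial ($d_X \le d_{X_u}$), and the reverse inequality $d_{X_u}(x,y) \le d_X(x,y)$ for $x,y \in X_u$ is exactly the content of \cref{prop:1}: any two vertices of $X_u$ are joined by a $1$-skeleton geodesic of $X$ lying inside $X_u$. I would then invoke \cref{lem:systolic-surface} (or rather the reasoning behind it) once more — or, more directly, I would argue systolicity of $X_u$ by checking the defining conditions. Connectedness and simple connectivity of $X_u$: connectedness follows from \cref{prop:1}. For $k$-largeness (equivalently, for $X_u$ systolic via links being $6$-large), the strategy is: take a cycle $C$ in $X_u$ that one wishes to fill or a short cycle in a link; fill it by a minimal surface $S$ in the ambient systolic complex $X$ using \cref{lem:systolic-surface}, and then show $S$ is actually contained in $X_u$. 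The key observation making this work is that $X_u$ is a union of $u$-invariant simplices — more precisely, every vertex adjacent to a vertex of $X_u$ and "trapped between" vertices of $X_u$ lands back in $X_u$ by Lemmas~\ref{lem:u-stableSplx} and~\ref{lem:dist_2_simplex}. Alternatively, and perhaps more cleanly, I would use the standard fact that an isometrically embedded, full, locally-$6$-large subcomplex of a systolic complex is systolic, so that once isometric embedding and fullness are in hand, it remains only to verify that vertex links in $X_u$ are $6$-large — and a link in $X_u$ is a full subcomplex of the corresponding link in $X$, to which one reapplies the filling argument.

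The step I expect to be the main obstacle is precisely showing that the minimal surface $S$ filling a short cycle of $X_u$ (or of a link) can be taken inside $X_u$, i.e.\ that $X_u$ is "convex enough" to detect the diagonals guaranteed by systolicity of $X$. The delicate point is that a diagonal of a short cycle $C \subseteq X_u$ is an edge of $X$ whose endpoints lie in $X_u$, but one must produce it \emph{within} $X_u$; since $X_u$ is full this is automatic once the edge exists in $X$, so the real issue is only whether the relevant short cycles in links behave well — and here I would propagate the surface argument of \cref{prop:1}, using minimality of area together with \cref{lem:dist_2_simplex} to push offending interior vertices of $S$ back into $X_u$, exactly as in Cases~1--4 of that proof. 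Thus the proposition should follow by assembling \cref{prop:1}, \cref{lem:u-stableSplx}, \cref{lem:dist_2_simplex}, and \cref{lem:systolic-surface}, with no genuinely new idea beyond a careful link-level repetition of the surface-minimization argument.
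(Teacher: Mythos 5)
Your proposal is correct and, in its clean form, coincides with the paper's proof: fullness is immediate from the definition, isometric embedding is exactly \cref{prop:1}, the claim about maximal simplices is \cref{lem:u-stableSplx}, and for systolicity the paper simply cites Elsner's Proposition~3.4 from \cite{Elsner09_Isom} --- precisely the ``standard fact'' you offer as your cleaner alternative (a full, isometrically embedded subcomplex of a systolic complex is systolic). Two remarks. First, $6$-largeness of the links of $X_u$ needs no filling argument at all: since $X_u$ is full, the link of a vertex in $X_u$ is a full subcomplex of the corresponding $6$-large link in $X$, and any short cycle there inherits its diagonal from $X$ by fullness; the only genuinely nontrivial point beyond local $6$-largeness is simple connectivity of $X_u$, which is exactly what the cited proposition supplies and which your sketch does not isolate. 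Second, your primary route --- filling a cycle of $X_u$ by a minimal surface in $X$ and pushing its interior vertices back into $X_u$ via \cref{lem:dist_2_simplex} and area minimality --- amounts to reproving Elsner's proposition from scratch; as you yourself flag, that pushing step is not actually carried out, so if you decline to invoke the external result you still owe that argument. As written, the proposal stands only because the alternative route it names is available.
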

\begin{proof}
	The complex $X_u$ is stable under $u$ and by Proposition~\ref{prop:1} its $1$-skeleton is isometrically embedded into $X$.  The fact that $X_u$ is a full subcomplex of $X$ is clear by definition. Now \cite{Elsner09_Isom}[Prop. 3.4] implies that $X_u$ is systolic. The fact that maximal simplices are $u$-stable directly follows from Lemma~\ref{lem:u-stableSplx}. 	
\end{proof}

\subsection{The bicycle property}

In this section we examine the shape of the orbits of vertices in a systolic complex under the action of a small dihedral group. The main result is  \cref{prop:bicycle} which explains the occurring dichotomy and will be referred to as the bicycle property.  The reason why we chose this name is illustrated in \cref{fig:bicycle}.

We first prove a technical lemma. 

Denote by $H$  the dihedral group of order $2n$. Write $u$ and $v$ for the two involutions that generate $H$ and suppose that $H$ acts geometrically on a systolic complex $X$. Let $a$ be a vertex in $X_u \cap X_v$.

%

\begin{lemma}\label{lem:diagonals}
	Denote by $H$ be a dihedral group of order $2n$ with $n \leq 5$ generated by two involutions $u$ and $v$ and suppose $H$ acts on a systolic complex $X$. For any vertex $a$ in $X_u \cap X_v$ the orbit $a^H$ either spans a simplex or the $1$-skeleton of $X$ contains a Hamiltonian cycle of $a^H$, i.e.\ a cycle whose vertex set is $a^H$, of length $2n$ without a diagonal.  
\end{lemma}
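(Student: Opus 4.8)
\textbf{Proof plan for Lemma~\ref{lem:diagonals}.}

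The plan is to understand the combinatorial structure of the orbit $a^H$ under the cyclic rotation $w = uv$ of order $n$, and to argue that consecutive elements of this $w$-orbit, together with the two involutions, force either many diagonals (giving a simplex) or none (giving a diagonal-free Hamiltonian cycle). First I would set up notation: since $a \in X_u \cap X_v$, we have $a \sim a^u$ and $a \sim a^v$ (or equalities, which only make the orbit smaller and the statement easier, so I would dispose of those degenerate cases first). Applying \cref{lem:u-stableSplx} repeatedly along the orbit, each pair of $H$-translates of the edge $a \sim a^u$ that share the ``$u$-type'' relation spans a simplex; more importantly, the images of these adjacencies under $H$ give a natural closed path through all $2n$ vertices of $a^H$, namely $a \sim a^u \sim a^{uv} \sim a^{uvu} \sim \cdots$, alternating applications of $v$ and $u$. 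This closed path visits $a, a^u, a^{uv}, \dots$ and returns to $a$ after $2n$ steps; I would first check it is a genuine cycle (all vertices distinct) in the case the orbit has full size $2n$, again handling smaller orbits separately.

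The heart of the argument is then a dichotomy on whether this Hamiltonian cycle $C$ has a diagonal. If $C$ has no diagonal we are in the second alternative and done. So suppose $C$ has a diagonal, say connecting two vertices at ``distance $\geq 2$'' along $C$. Here I would use the $H$-equivariance: the existence of one diagonal, say between $a^g$ and $a^h$, implies by applying powers of $w$ and the involutions the existence of a whole $H$-orbit of such diagonals. I expect that for $n \leq 5$ the combinatorics are rigid enough that any single diagonal, once spread around by the group action, forces \emph{all} pairs of orbit vertices to be adjacent: the cycle has length $\le 10$, so a diagonal is a ``short chord'', and short chords plus the flag/$6$-large condition plus equivariance should cascade. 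Concretely, I would examine the $4$-cycles and $5$-cycles that a diagonal chord cuts off from $C$, apply $6$-largeness (and \cref{lem:u-stableSplx} to control where new edges land relative to $u$ and $v$), and show the chords multiply until the whole vertex set $a^H$ is a clique; since $X$ is flag this clique spans a simplex, the first alternative.

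The main obstacle I anticipate is the case analysis on $n$ in the ``diagonal exists'' branch: the length-$2n$ cycle admits diagonals of several types (skipping one vertex, skipping two, the long ``antipodal'' diagonal when $n$ is such that $2n$ is even, which it always is here), and for each type one must check that the $H$-orbit of that diagonal, together with systolicity of $X$, collapses $a^H$ to a simplex rather than stabilizing at some intermediate configuration. I would organize this by the $w$-shift length $k$ of the chord (i.e.\ a chord from $a^g$ to $a^{w^k g}$ or a chord crossing between the two cosets of $\langle w\rangle$), note that $k$ ranges over a small finite set, and for each $k$ either exhibit a forbidden short cycle without diagonal (contradicting $6$-largeness) unless more chords are present, or directly build the clique. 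Keeping careful track of which vertices are $u$-images versus $v$-images of which others — using \cref{lem:u-stableSplx} so that an edge $a^g \sim a^h$ automatically gives $a^{gu} \sim a^h$ and $a^g \sim a^{hu}$ — is what makes the cascade go through, and is the delicate bookkeeping step.
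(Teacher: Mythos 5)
Your plan matches the paper's proof essentially step for step: form the natural closed path $a, a^u, a^{vu}, a^{uvu}, \dots$ through the orbit (treating the degenerate/small-orbit cases separately), then dichotomize on whether the resulting Hamiltonian cycle has a diagonal, and in the affirmative case spread that diagonal around by the simplicial $H$-action so that every chord of the same shift appears, after which $6$-largeness on the resulting $4$- and $5$-cycles (possible precisely because $2n\le 10$) forces all diagonals and hence a simplex. This is the same argument as in the paper, so no further comparison is needed.
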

\begin{proof}
	Since $a \in X_u \cap X_v$, the complex $X$ contains the closed path \[P=(a,a^u, a^{vu}, a^{uvu},  \dots, a^{(vu)^n}=a).\] This closed path contains all elements of $a^H$. Note that $a$ might be fixed by $u$ or $v$ or both. Hence, two consecutive vertices in $P$ might coincide. Let $C$ be the graph whose vertex set consists of all the vertices that occur in $P$ and whose edge set consists of all edges that occur in $P$. Then $C$ is either a single vertex or a single edge or a Hamiltonian cycle of $a^H$. We have to study the case that $C$ is a Hamiltonian cycle of $a^H$. By Burnside's lemma, $2n$ is divisible by the length of the cycle $C$.
	
	If $\vert a^H \vert < \vert H \vert$ then $\vert a^H \vert \in\{1,2,4, 5\}$ again using Burnside's lemma. 
	By $6$-largeness the cycle $C$ then has at least one diagonal $d$ connecting a pair of vertices at distance $m>1$ on $C$. Using the group action we may conclude that any pair of vertices at distance $m$ on $C$ is connected by an edge. Using $6$-largeness again we conclude that $C$ has all possible diagonals and hence its vertices span a simplex. 
	
	Suppose now that $\vert a^H \vert = \vert H \vert$. If there is no diagonal on $C$ then we are done. Now let $d$ be a diagonal connecting two vertices of distance $1<m\le n$ on $C$. By the same argument as above any pair ov vertices at distance $m$ is connected by an edge. Since $2n< 10$ this implies the existence of cycles of length $4$ or $5$ each of which has one and hence all diagonals by $6$-largeness and using the group action. Thus $C$ spans a simplex. 
\end{proof}

The following example illustrates that \cref{lem:diagonals} is not satisfied by dihedral groups of order larger than $10$.

\begin{example}
	\label{example1}
	Fix a natural number $n$ and construct as follows a simplicial complex which admits a natural action of the dihedral group of order $2n$. Let $C$ be a cycle of length $2n$.  Add to $C$ an edge between every pair of vertices at distance two and let $Z$ be the flag simplicial complex on the cone over the resulting graph. \cref{fig:Example1} shows the complex one obtains for $n=6$. The action of the dihedral group of order $2n$ on $C$ extends to an action on $Z$ having the conepoint as a fixed point. Examining the links of all vertices one can verify that the complex $Z$ is systolic if and only if $n$ is at least $6$. 
\end{example} 

\begin{figure}[h!]
	\centering
	\includegraphics[]{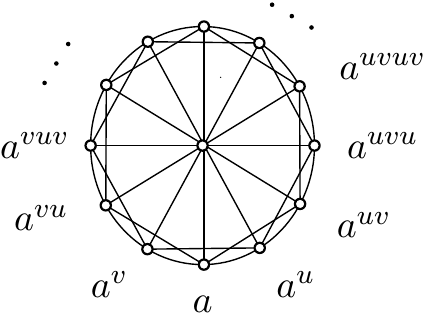}
	\caption{ An example that not all dihedral groups satisfy Lemma~\ref{lem:diagonals}. The dihedral group of order $12$ acts on the pictured systolic complex.}
	\label{fig:Example1}
\end{figure}

We now prove one of the main properties needed for the proof of \cref{thm:fixedPoint}. 

\begin{prop}[bicycle property]\label{prop:H}\label{prop:bicycle}
	Fix $n\leq 5$ and let $H$ be the dihedral group of order $2n$ generated by involutions $u$ and $v$. Suppose further that $H$ acts on a systolic complex $X$ and let $a$ be a vertex in $X_u \cap X_v$. Then one of the following two statements is true:
	\begin{enumerate}
		\item The orbit $a^H$ spans a simplex.
		\item  The orbit $a^H$ forms an $\vert H \vert$-cycle $C$ without diagonals and there exists a vertex $b$ in $X$ such that $b^H$ supports an $H$-stabilized simplex $\Sigma$ and such that the vertices of $C$ and the vertices of $\Sigma$ span a complete bipartite graph. Moreover, each vertex of $\Sigma$ is contained in $X_u\cap X_v$. 
	\end{enumerate}
	If $n \in \{1,2\}$, only the first case occurs. 
\end{prop}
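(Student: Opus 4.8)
The plan is to reduce the bicycle property to the structural dichotomy furnished by \cref{lem:diagonals}, and then, in the nontrivial case, to promote the ``no-diagonal Hamiltonian cycle'' picture into the bipartite-graph-plus-stabilized-simplex picture. First I would invoke \cref{lem:diagonals} directly: for $a \in X_u \cap X_v$, either $a^H$ spans a simplex --- which is conclusion (1) --- or the $1$-skeleton of $X$ contains a Hamiltonian cycle $C$ of $a^H$ of length $2n$ with no diagonal. So from now on we are in the second case of the lemma and must produce the vertex $b$ and the simplex $\Sigma$.

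The next step is to build a surface. The cycle $C$ has length $2n \le 10 < \infty$, so by \cref{lem:systolic-surface} it spans a minimal, and hence systolic, surface $S$. By \cref{prop:GaussBonnet} the sum of the defects of the boundary vertices of $S$ is at least $6$, and since $C$ has length $2n \le 10$ and no diagonal, each boundary vertex lies in at least one triangle of $S$ (the two cycle-edges at that vertex, being edges of $S$, bound at least one triangle since $C$ is not a single triangle here), so each boundary defect is at most $2$; the total being $\ge 6$ forces either enough large defects or the presence of inner vertices, and in the small range $n \le 5$ this is very restrictive. The key observation is that $C$ having no diagonal means no two boundary vertices at cyclic distance $2$ are adjacent, so the triangles of $S$ incident to the boundary must use inner vertices. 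I would argue that there is a single inner vertex $b$ adjacent to several (eventually all) vertices of $C$: because $H$ acts on $X$ and the minimal surface can be chosen $H$-equivariantly (or, failing that, one transports the local picture around $C$ using the group action as in the proof of \cref{lem:diagonals}), any vertex $b$ of $S$ adjacent to one boundary vertex gives, under the $H$-orbit, an $H$-invariant set $b^H$ of inner vertices; systolicity of $S$ together with $2n \le 10$ then collapses $b^H$ to a simplex $\Sigma$, exactly as diagonals were forced to exist in \cref{lem:diagonals}. Once $\Sigma$ is $H$-stabilized, equivariance plus $6$-largeness (filling in the obvious $4$-cycles of the form $(c, b_1, c', b_2)$ with $c,c'$ adjacent on... wait, $c \not\sim c'$, so the diagonal $b_1 \sim b_2$ appears, which we already have; the point is rather that each $c \in C$ is adjacent to each $b_i \in \Sigma$) yields the complete bipartite graph between $C$ and $\Sigma$. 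Finally, each vertex $b_i$ of $\Sigma$ is adjacent to $a$ and to $a^u, a^v$ (which are vertices of $C$), and since $\Sigma$ is $u$- and $v$-invariant, $b_i^u, b_i^v \in \Sigma$, so $b_i \sim b_i^u$ and $b_i \sim b_i^v$, placing $b_i \in X_u \cap X_v$; this gives conclusion (2).

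For the last sentence, if $n \in \{1,2\}$ then $|H| \le 4$, and a ``Hamiltonian cycle'' of $a^H$ would have length at most $4$. Length $\le 2$ is not a cycle at all, and a $4$-cycle has a diagonal by $6$-largeness, so $C$ is never a no-diagonal cycle; hence only case (1) can occur. (Concretely, for $n=1$ the orbit has at most $2$ vertices; for $n=2$, $a^H$ has at most $4$ vertices and by \cref{lem:commuting uv} even spans a simplex outright.)

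The main obstacle I anticipate is the middle step: extracting a \emph{single} $H$-stabilized inner simplex $\Sigma$ and showing the bipartite graph between $C$ and $\Sigma$ is complete, rather than merely that every boundary vertex has \emph{some} inner neighbor. This requires a careful defect count on the minimal surface $S$ --- using \cref{prop:GaussBonnet} and \cref{lem_sumdef} to control where the negative/positive defects sit --- combined with the $H$-equivariance of the configuration to propagate local adjacencies all the way around $C$; the smallness $2n \le 10$ is exactly what makes the resulting $4$- and $5$-cycles short enough to be filled by $6$-largeness, mirroring the mechanism already used in the proof of \cref{lem:diagonals}.
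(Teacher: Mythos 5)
Your opening and closing moves are fine: reducing to \cref{lem:diagonals}, spanning a minimal systolic surface $S$ on the no-diagonal cycle $C$, and the $n\in\{1,2\}$ remark all match the paper. But the crux of conclusion (2) --- actually producing a vertex $b$ that is adjacent to three consecutive vertices of $C$, lies in $X_u\cap X_v$, and has $b^H$ spanning a simplex --- is where your argument has a genuine gap. You appeal to choosing the minimal surface ``$H$-equivariantly (or, failing that, transporting the local picture around $C$ using the group action)''. Neither option is available as stated: a minimal surface spanned by $C$ need not admit any $H$-action, and an inner vertex $b$ of $S$ adjacent to one boundary vertex does not give you an $H$-orbit of \emph{inner} vertices, nor does it give you $b\in X_u\cap X_v$ --- which you need before \cref{lem:diagonals} can even be applied to $b$. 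The adjacencies $b\sim b^u$, $b\sim b^v$ and the diagonal $b^u\sim b^v$ in the Hamiltonian cycle of $b^H$ have to be extracted from explicit short cycles in $X$ (e.g.\ $(b,a^u,b^u,a^v)$ and $(a^u,b^u,a^v,b^v)$, using that $C$ has no diagonals), and for that you first need $b$ adjacent to enough consecutive vertices of $C$, which is exactly what the defect analysis must deliver.

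That defect analysis splits into two cases you do not engage with. If $\partial S$ contains three consecutive vertices of defect $1$ (note: no diagonals on $C$ forces boundary defects $\le 1$, not $\le 2$ as you write), the inner neighbor of the middle one is adjacent to five consecutive boundary vertices and the above cycle arguments go through. If not, Gauss--Bonnet forces $2n=10$, and an isoperimetric/Pick's-formula count pins $S$ down to one specific surface with exactly four inner vertices; verifying $b\in X_u\cap X_v$ and $b^u\sim b^v$ there requires a further chain of $4$- and $5$-cycle arguments. You flag this middle step as the anticipated obstacle, which is accurate --- but it is the proof, not a detail, so as it stands the proposal does not establish the proposition.
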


\begin{remark}\label{rmk:highern}
	Proposition~\ref{prop:H} plays a crucial role in the proof of our main result. The key features we need there is on one hand the property that two of the three standard Coxeter generators commute and on the other hand that any pair of generators generates a dihedral group with the properties listed in Proposition~\ref{prop:H}.
	Note that the groups (2,3,3), (2,3,4) and (2,3,5) also show this behavior. But in contrast to (2,4,4) (2,4,5) and (2,5,5) they are finite. 
	We deal with larger $n$ in Example~\ref{ex:bicycle} below. 
\end{remark}

We are now ready to prove \cref{prop:bicycle}. 

\begin{proof}
	Let $C$ be the Hamiltonian cycle on $a^H$ as obtained in \cref{lem:diagonals} and $S$ a minimal surface spanned by $C$. We aim to prove existence of a vertex $b$ with the following three properties: 
	\begin{enumerate}
		\item $b \in X_u \cap X_v$
		\item $b$ is adjacent to three consecutive vertices on $C$
		\item $b^H$ spans a simplex. 
	\end{enumerate}
	
	It then remains to prove that the vertices of $C$ and those of $b^H$ span a complete bipartite graph. We obtain this from the $6$-largeness of $X$ and the way in which the dihedral group acts. Assume without loss of generality that $b$ is adjacent to $a$, $a^u$ and $a^v$. Using the group action we obtain that $a$ is connected to every vertex of $b^H$. But then $X$ contains the closed path $(a,a^v,b^{uv},b^u)$ and $6$-largeness implies that $a~\sim~b^{uv}$. Analogously we obtain that $a \sim b^{vu}$ and also that  $a^v \sim b^{vuv}$. We then conclude that $X$ contains the closed path $(a,a^v,b^{vuv},b^{vu})$. Using  $6$-largeness again we obtain that $a \sim b^{vuv}$ and with similar arguments that $b \sim b^{uvu}$. Repeating these steps we can show that $a$ is adjacent to every element of $b^H$.

	It thus remains to prove that we have a vertex $b$ satisfying the listed properties. 
	This is done by an examination of the defects of vertices on the boundary of $S$. There are two cases. We study two cases.
	
	\emph{Case 1: The boundary of $S$ contains three consecutive vertices of defect $1$}\\
	\noindent 
	Using the group action we may assume that these vertices are $a$, $a^u$ and $a^v$. Let $b$ the neighbor of $a$ in the interior of $S$. Since $a^u$ and $a^v$ have defect $1$ we obtain that $b\sim a^u$, $b \sim a^v$,  $b \sim a^{vu}$ and $b \sim a^{uv}$. Therefore all these $5$ consecutive vertices on $C$ are adjacent to $b$. We show that $b^H$ spans the simplex we are looking for.  
	
	Using the  group action we have that $b^u \sim a^v$ and $a^u \sim b^v$. 
	As $X$ contains the cycles $(b,a^u,b^u,a^v)$ and $(b,a^u,b^v,a^v)$ and as $C$ does not have diagonals, $b$ has to be either equal or adjacent to $b^u$ and $b^v$, i.e. $b \in X_v\cap X_u$.  Furthermore $X$ contains the cycle $(a^u, b^u, a^v, b^v)$ and since $C$ does not have diagonals, $b^u$ is adjacent to $b^v$. Thus the natural Hamiltonian cycle of $b^H$ has a diagonal. We apply Lemma~\ref{lem:diagonals} to conclude that $b^H$ is a simplex.

	\emph{Case 2: The boundary of $S$ does not contain three consecutive vertices of defect $1$}\\
	\noindent
	As a first step we will show that $S$ is isometric to the surface pictured in Figure \ref{fig:Sonderfall}. 
	The vertex $b$ pictured in Figure \ref{fig:Sonderfall} is adjacent to $a$, $a^u$, $a^v$ and $a^{vu}$. We will prove that $b \in X_u \cap X_v$ and that $b^H$ spans a simplex. 
	
	In order to prove that $S$ is isometric to the surface shown in Figure \ref{fig:Sonderfall}observe that $C$ has length $10$: Because $C$ does not contain diagonals, each vertex on the boundary of $S$ has a defect of at most $1$. By Proposition \ref{prop:GaussBonnet} the defects along the boundary of $S$ sum up to at least $6$. If $2n< 10$ and  $\partial S$ does not contain three consecutive vertices of defect $1$, the sum of defects has to be less than $6$ which is a contradiction. Hence the length of $C$ is at least $10$.

	\begin{figure}[ht!]
		\centering
		\includegraphics[width=0.4\textwidth]{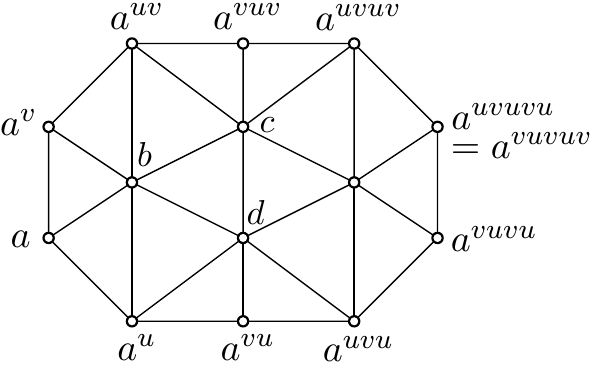} 
		\caption{This surface is a special case appearing in the proof of \cref{prop:bicycle}.}
		\label{fig:Sonderfall}
	\end{figure}

	We show as a next step that $S$ contains exactly four inner vertices. Using the isoperimetrical inequalities of Lemma 3.4 in \cite{Elsner09_Flats} we obtain that any systolic disc $\Delta$ of perimeter $l$  has $\area \le \frac{1}{6}l^2$.
	Hence $S$ contains at most $16$ $2$-simplices. Picks Formula, see \cite[Le 3.3]{Elsner09_Flats}, implies that  $S$ has at most four inner vertices. If $\partial S$ contains at most three inner vertices, the interior of $S$ contains at most one $2$-simplex. This implies that the boundary of $S$ contains at most three vertices of defect $0$ and thus three consecutive vertices of defect $1$ which contradicts our assumptions. Therefore, $S$ contains four inner vertices. Because these $4$ vertices span a subcomplex consisting of two $2$-simplices, exactly four  vertices on the boundary of $S$ have defect $0$ and $6$ vertices on the boundary of $S$ have defect $1$. The surface shown in \cref{fig:Sonderfall} is the only systolic complex satisfying these conditions which implies that $S$ is isometric to this surface. 
	
	The vertex $b$ in \cref{fig:Sonderfall} is adjacent to $a$, $a^u$, $a^v$ and $a^{vu}$. We will prove that $b \in X_u \cap X_v$ and that $b^H$ spans a simplex. 
	
	First we prove that $b \in X_u \cap X_v$. 
	As $X$ contains the closed $4$-path $(b,a,b^v, a^{uv})$ we obtain that $b \sim b^v$ or $b = b^v$ by $6$-largeness. It remains to prove that $b = b^u$ or that $b \sim b^u$. If $b = b^u$, we are done. Hence we assume that $b \neq b^u$. 
	Since $C$ does not contain a diagonal one has that $a\nsim a^{vu}$. If $b \sim a^{vu}$ we can exchange $S$ with a surface whose boundary contains three consecutive vertices of defect $1$. Then $b \sim b^u$ by Case $1$. Hence we suppose that $b \nsim a^{vu}$. Furthermore, $a \nsim d$.  Otherwise $(a,a^v,a^{uv},c,d)$ is a $5$-cycle and by $6$-largeness it can be triangulated to a surface containing three $2$-simplices. These three $2$-simplices then lead to the existence of a surface with boundary $C$ that has less triangles than $S$ which contradicts minimality. Thus,  $a \nsim d$. By $6$-largeness, the  cycle $(a,b,d,a^{vu},b^u)$ contains the remaining two diagonals, i.e. we have $b \sim b^u$ and $b^u \sim d$. It follows that $b \in X_u \cap X_v$.
	
	It remains to prove that $b^H$ spans a simplex.  By Lemma \ref{lem:diagonals}, $b^H$ spans a simplex if the Hamiltonian cycle of $b^H$ contains a diagonal. We prove this by showing that $b^v \sim b^u$. 
	This follows from the existence of a certain $5$-cycle. To form this $5$-cycle, we need that $c^v\sim d$.  To prove this we consider the closed path $(b^v,b,c^v,a^{vu},d)$. If $b \sim a^{vu}$ or $b^v \sim a^{vu}$,  we can exchange $S$ with a surface whose boundary contains three consecutive vertices of defect $1$. Hence we suppose that $b \nsim a^{vu}$ and $b^v \nsim a^{vu}$. Then $c^v\sim d$, because otherwise there is a $4$- or $5$-cycle without a diagonal. 
	Recall that we have seen already that $b^u \sim d$. Hence, $X$ contains the closed path $(a, b^v, c^v, d, b^u)$. We show that this closed path contains the desired diagonal connecting $b^v$ and $b^u$. Recall that we have proven that $a \nsim d$. Analogously we see that $a \nsim v^c$. Hence $b^v \sim b^u$ because otherwise $X$ contains a $4$- or $5$-cycle without a diagonal. 
\end{proof}

\cref{prop:bicycle} is not true for dihedral groups of order larger than $10$ as illustrated in the next example. 

\begin{example}\label{ex:bicycle}
	Consider the flag simplicial complex whose $1$-skeleton is obtained as follows. We take the $1$-skeleton of the simplicial complex of Example \ref{example1}. This $1$-skeleton consists of a vertex $v$ and a cycle $C= (x_1,x_2,\dots,x_{2n-1})$. We add a further  cycle $C'=(y_1,y_2,\dots,y_{2n-1})$ of length $2n$. We connect each vertex $y_i$ on $C'$ with the vertices $x_{m(i-1)}$, $x_{m(i)}$ and $x_{m(i+1)}$, where $m(i)$ denotes $i \text{ mod }{2n}$, $i \in \{1,\dots,2n-1\}$. The example is illustrated in \cref{fig:Example} for $n = 6$ . The obtained simplicial complex is systolic if and only if $n$ is at least $6$. Accordingly, a vertex $a$ on the cycle $C'$ does not satisfy the properties of the last proposition if and only if $n$ is at least $6$.
\end{example}

\begin{figure}[ht!]
	\centering
	\includegraphics{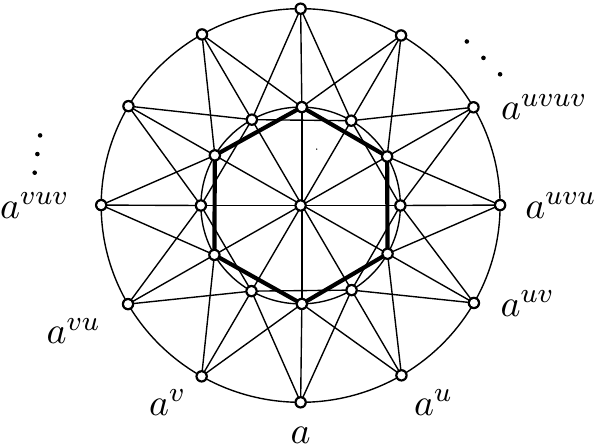}
	\caption{The dihedral group of order $12$ acts on the pictured systolic complex and does hence not satisfy the assertion of \cref{prop:H}. Compare also \cref{ex:bicycle}.}
	\label{fig:Example}
\end{figure}

\begin{remark}\label{rem:wilks1}
	The bicycle property discussed in \cref{prop:bicycle} is related to properties Wilks proves in Lemmas~4.2,~4.6 and~4.8 of~\cite{Wilks}. In addition what we prove in Lemma~\ref{lem:diagonals} is similar to what is done in Lemmas~4.3 and~4.4 in \cite{Wilks}. 
\end{remark}

\section{Construction of a minimal surface $S$}\label{sec:minimalSurface} 

The aim of the present section is to show existence of a very specific minimal surface in $X$. We start by fixing the following notation for the entire section. 

\subsection{The setup}\label{sec:surfaceConstruction}

We begin with fixing some notation. 

\begin{notation}\label{notation:triangle-groups}
	Suppose $\Gamma$ is either $(2,4,5)$ or $(2,5,5)$, that is $\Gamma$ admits one of the two following presentations: 
	\[
	\big\langle r,s,t, \vert r^2 = s^2 = t^2 = (rs)^2 = (st)^j = (rt)^5 \big\rangle \;\text{ where } j= 4 \text{ or }5.
	\]
	Let $X$ be a systolic complex and suppose that $\Gamma$ acts on $X$.
\end{notation}

Using the notation we just introduced  we obtain by Theorem~\ref{thm:fixedpointthm} that the intersection $X_u\cap X_v$ of the respective stabilized subcomplexes of $X$ is nonempty for any choice of $u\neq v$ with $\{u, v\} \subset S$. 

We then choose vertices $x \in X_s \cap X_r$, $y \in X_s \cap X_t$, and $z\in X_r \cap X_t$ and three geodesics connecting them, i.e.\  $\gamma_s: x \rightsquigarrow y$ in $X_s$, $\gamma_t: y \rightsquigarrow z$ in $X_t$, and $\gamma_r: z \rightsquigarrow x$ in $X_r$ in such a way that the resulting cycle is minimal in length.

Note that if $X_r\cap X_s\cap X_t\neq\emptyset$ this means that $x=y=z$, all the geodesics are trivial and the cycle formed by the three geodesics consist of a single vertex. 

In case that $X_r\cap X_s\cap X_t=\emptyset$ the three vertices will be pairwise different and each geodesic is of length at least one.  
Let $C$ be the cycle formed by the concatenation of the three geodesics, i.e.\ $C=\gamma_r\star\gamma_t\star\gamma_s$ and choose a surface $S$ spanned by $C$ that is minimal in area. By \cref{lem:systolic-surface}, such a surface $S$ exists. We will refer to the vertices $x,y,z$ as the \emph{corners} and call the three geodesics  \emph{sides} of $S$. We illustrate this situaiton in Figure~\ref{fig:FlaecheS}.

\begin{figure}[h!]
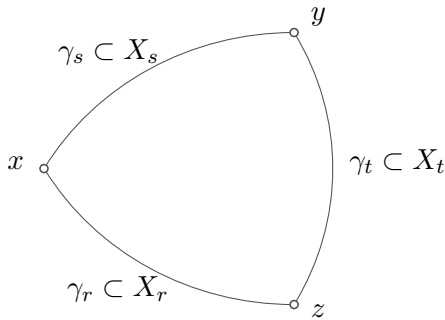
	
	\centering
	\begin{overpic}[scale=1.0,,tics=10]
		{{FlaecheS}}
		\put(-10,48){$x$}
		\put(92,98){$y$}
		\put(92,-2){$z$}
		
		\put(7,85){$\gamma_s \subset X_s$}
		\put(105,48){$\gamma_t\subset X_t$}
		\put(10,5){$\gamma_r\subset X_r$}
	\end{overpic}
	\caption{The surface $S$ with corners $x$, $y$ and $z$ and sides $\gamma_s$, $\gamma_t$ and $\gamma_r$.}
	\label{fig:FlaecheS}
\end{figure}

In the next lemma we prove that in case the intersection of the invariance sets is nonempty there exits a global fixed point.  

\begin{lemma}[Existence of a fixed point]
	\label{lemma:fixpoint}
	If $\Gamma$ acts on a systolic complex $X$ with $X_r \cap X_s \cap X_t\neq\emptyset$ then {there exists a $\Gamma$-invariant simplex in $X$. In particular, every such action on the geometric realization of $X$ has a global fixed point.} 
\end{lemma}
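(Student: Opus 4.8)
Since $\Gamma=\langle r,s,t\rangle$, a simplex is $\Gamma$-invariant exactly when it is invariant under each of $r$, $s$ and $t$, and the barycenter of a $\Gamma$-invariant simplex is a $\Gamma$-fixed point of $|X|$; so the plan is to build a simplex fixed by all three standard generators. Fix a vertex $a\in X_r\cap X_s\cap X_t$. I first exploit that $r$ and $s$ commute: by \cref{lem:commuting uv} the orbit $a^{\langle r,s\rangle}$ spans a simplex invariant under $\langle r,s\rangle$, and since $d(v,v^s)\le 1$ implies $d(v^r,v^{rs})\le 1$ (as $r,s$ commute and act by isometries), every vertex of this orbit still lies in $X_r\cap X_s$. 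Together with $r$ and with $s$, the remaining generator $t$ generates the dihedral groups $\langle r,t\rangle$ of order $10$ and $\langle s,t\rangle$ of order $8$ or $10$, so both fall under the bicycle property.

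Now I apply \cref{prop:bicycle} to $\langle r,t\rangle$ and $\langle s,t\rangle$ at the vertex $a$. For each of these subgroups, either the orbit of $a$ already spans a simplex, or case (2) holds and yields an auxiliary simplex, stable under the dihedral group, all of whose vertices are joined to $a$ and lie in the corresponding pairwise intersection of invariance sets. I would choose $a\in X_r\cap X_s\cap X_t$ so as to minimize, lexicographically, the number of these two subgroups for which the $a$-orbit fails to span a simplex, and then the total size of the two orbits; if case (2) occurred for one of them, the associated auxiliary simplex would — once one checks, again via $rs=sr$, that it still meets the third invariance set — contain a vertex of $X_r\cap X_s\cap X_t$ with a strictly smaller value, a contradiction. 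Hence for such a minimal $a$ both orbits $a^{\langle r,t\rangle}$ and $a^{\langle s,t\rangle}$ span simplices; moreover, using the argument inside the proofs of \cref{lem:diagonals} and \cref{prop:bicycle} that a single diagonal of such an orbit cycle forces all diagonals, together with $rs=sr$, every vertex appearing in $a^{\langle r,s\rangle}$, $a^{\langle s,t\rangle}$ or $a^{\langle r,t\rangle}$ lies in $X_r\cap X_s\cap X_t$.

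Finally I let $\Sigma$ be the full subcomplex of $X$ spanned by $a^{\langle r,s\rangle}\cup a^{\langle s,t\rangle}\cup a^{\langle r,t\rangle}$. One must first show that $\Sigma$ is a simplex — the only edges not yet accounted for are those joining an $\langle r,s\rangle$-translate of $a$ to a $t$-translate of $a$, and these should come from the usual $6$-largeness arguments along the short closed paths through $a$, as in the closing-up steps of \cref{prop:bicycle} — and then that $\Sigma$ is invariant under $r$, $s$ and $t$, by checking for each generator $u$ and each vertex $a^w\in\Sigma$ (with $w$ in one of the three spherical subgroups) that $a^{wu}$ is again one of the listed orbit vertices. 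I expect the verification of this full $\Gamma$-invariance to be the main obstacle: since $t$ commutes with neither $r$ nor $s$, the generator $t$ a priori carries a vertex $a^{rs}$ of the $\langle r,s\rangle$-orbit to a vertex $a^{rst}$ that need not obviously lie among the three orbits, and excluding this is where the tight control the bicycle property gives over $a^{\langle r,t\rangle}$ and $a^{\langle s,t\rangle}$, together with the minimal choice of $a$, has to be pushed through. Everything else reduces to flagness and $6$-largeness.
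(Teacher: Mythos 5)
Your overall strategy matches the paper's: reduce to the case where the dihedral orbits of a vertex $a\in X_r\cap X_s\cap X_t$ span simplices, then take the simplex spanned by a union of orbits. But there is a genuine gap exactly where you flag it, and it is not a routine verification that can be "pushed through": the set $M=a^{\langle r,s\rangle}\cup a^{\langle s,t\rangle}\cup a^{\langle r,t\rangle}$ is simply not $\Gamma$-invariant in general. The vertex $a^{rst}$ corresponds to a group element lying in none of the three spherical subgroups, so no amount of control over the three orbits will place it back inside $M$. The paper resolves this by enlarging the candidate to the subcomplex spanned by $M\cup M^r\cup M^s\cup M^t$, and then closes the orbit under each generator by exploiting that, once each $a^{\langle u,w\rangle}$ spans a simplex, \cref{prop:bicycle} forces $|a^{\langle u,w\rangle}|\le 5$, whence $a^{uw}=a^{wu}$ or $a^{uw}=a^{uwu}$; these rewriting identities are what make the enlarged set stable under all of $r,s,t$. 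Without the enlargement, invariance fails; without the rewriting identities, invariance of the enlarged set is unproved. Neither appears in your proposal.

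Two further points are weaker than you suggest. First, your lexicographic-minimization reduction needs the auxiliary simplex from case (2) of \cref{prop:bicycle} to contain a vertex of $X_r\cap X_s\cap X_t$; the bicycle property only puts its vertices in $X_u\cap X_v$ for the relevant pair, so membership in the third invariance set is an unproved claim (and is the parenthetical "once one checks" you defer). Second, your assertion that every vertex of the three orbits lies in $X_r\cap X_s\cap X_t$ is stronger than what is needed or what the cited lemmas give; the paper only shows the vertices of $a^{\langle s,t\rangle}\cup a^{\langle r,t\rangle}$ lie in $X_t$ (via the identities above) and then assembles the simplex from $\tau$, $\tau^r$, $\tau^s$, $\tau^{sr}$ by $6$-largeness. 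As written, your argument establishes neither that your $\Sigma$ is a simplex nor that it is $\Gamma$-invariant, and the second failure is structural rather than technical.
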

\begin{proof}
	First observe that we may assume that $x^{\langle u,w \rangle}$ spans a simplex for all $u,w\in  S$. If $x^{\langle u,w \rangle}$ does not span a simplex for some $u, w\in\{r,s,t\}$ there exists another vertex with the desired properties by Proposition~\ref{prop:H} and $6$-largeness. 
	
	Let $M=x^{\langle r,s \rangle} \cup x^{\langle r,t \rangle} \cup x^{\langle s,t \rangle}$ and let $X'$ be the simplicial subcomplex of $X$ spanned by $M\cup M^t\cup M^s\cup M^r$. We aim to show that $X'$ is a simplex stabilized by $\Gamma$. Figure \ref{fig:fixpoint} serves as an illustration of the situation. 
	
	Proposition~\ref{prop:H} implies that the orbit  $x^{\langle u,w \rangle}$ contains at most $5$ elements for arbitrary $u,w\in\{r,s,t\}$. Using the  $\Gamma$-action we conclude that then either $x^{uw}= x^{wu}$ or  $x^{uw}=x^{uwu}$. Hence the set of vertices in $x^{\langle s,t \rangle} \cup x^{\langle r,t \rangle}$  is contained in $X_t$ and spans a simplex $\tau$ by Lemma~\ref{lemma:2}. Using $6$-largeness one can see that the union $\tau^s\cup \tau^r\cup\tau^{sr}$ of these simplices also forms a simplex and therefore $X'$ is a simplex.  
	
	It remains to prove that $X'$ is stable under $\Gamma$. 
	Fix a vertex $a \in X'$. We need to show that $a^u \in X'$ for all $u \in\{r,s,t\}$. In case $a\in M$ this is clear by construction. 
	In case $a$ is contained in $M^w$, $w\in\{r,s,t\}$ we proceed as follows. If $u = w$, the claim follows directly. Otherwise $a = vuw.x=x^{wuv}$ or $a = vwu.x=x^{uwv}$. So $u$ appears as the second or third letter in the element that is acting.  By Proposition~\ref{prop:H} the orbit  $x^{\langle u,w \rangle}$ contains at most $5$ elements for any choice of $u,w\in\{r,s,t\}$. 
	
	We had observed earlier that either $x^{uw}= x^{wu}$ or $x^{uw}=x^{uwu}$. By Proposition~\ref{prop:H}, the orbit $ux^{\langle v,w \rangle}$ spans a simplex with at most $5$ vertices. The same is true for $wx^{\langle u,v \rangle}$. Hence $(ux)^{vw}= (ux)^{wv}$ or $(ux)^{vw}=(ux)^{vwv}$ and $(wx)^{vu}= (wx)^{uv}$ or $(wx)^{vu}=ux^{vuv}$ which implies that $a^u$ is contained in $X'$. 
	Therefore $\Gamma X'=X'$ and we are done. 
\end{proof}

\begin{figure}[htb]
	\centering
	\includegraphics[width=0.7\textwidth]{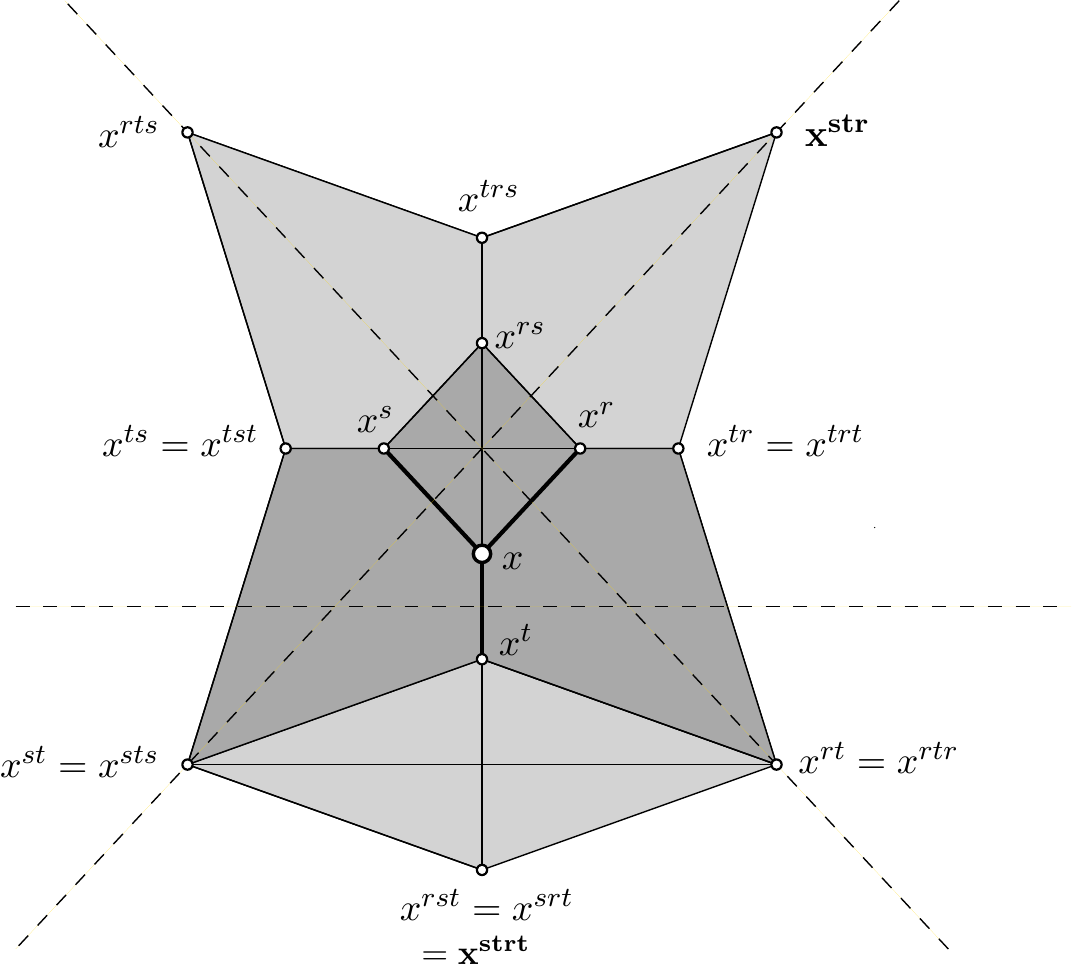}
	\caption{$X'$ as described in the proof of \cref{lemma:fixpoint}. The complex spanned by $M$ is shown in dark gray. }
	\label{fig:fixpoint}
\end{figure} 

\begin{remark}\label{rem:wilks2}
	\cref{lemma:fixpoint} is closely related to \cite[Thm~1.2]{Wilks} which  implies that for the group $(2,4,5)$ the intersection $X_r \cap X_s \cap X_t$ is always nonempty. Note that the proof of Theorem 1.2 in \cite{Wilks} is quite involved. 	
\end{remark}

We will now show that in case the given action {does not fix a simplex} the minimal surface we have constructed will not be degenerate. 

\begin{prop}[Existence of a nondegenerate minimal surface]\label{prop:existence}
	Suppose {$\Gamma$ acts on a systolic complex $X$ without fixing a simplex}. Then $S$ contains at least one $2$-simplex. 
\end{prop}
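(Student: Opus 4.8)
The plan is to argue by contradiction on the existence of a fixed simplex: first dispatch the degenerate case using \cref{lemma:fixpoint}, and then exploit the length-minimality built into the choice of the boundary to show that $C$ is an honest simple cycle, which immediately forces $S$ to be two-dimensional.

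First I would observe that if $X_r\cap X_s\cap X_t\neq\emptyset$, then \cref{lemma:fixpoint} produces a $\Gamma$-invariant simplex, contrary to the hypothesis; hence we may assume $X_r\cap X_s\cap X_t=\emptyset$. As already recorded above, this makes $x$, $y$, $z$ pairwise distinct and forces each of the geodesics $\gamma_r,\gamma_s,\gamma_t$ to have length at least one, so the closed path $C=\gamma_r\star\gamma_t\star\gamma_s$ has combinatorial length at least three.

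The core step is to show that $C$ is a simple cycle, i.e.\ that the three geodesics meet pairwise only in their common corner. Each $\gamma_u$ is a shortest path, hence a simple path, so failure of simplicity means that two of the geodesics share a vertex $w$ distinct from their common corner. Suppose $\gamma_s$ and $\gamma_t$ both contain such a $w\neq y$; since $\gamma_s\subseteq X_s$ and $\gamma_t\subseteq X_t$, we get $w\in X_s\cap X_t$. If $w$ is interior to both geodesics, I would replace the corner $y$ by $w$, and replace $\gamma_s$, $\gamma_t$ by their initial segment from $x$ to $w$ and terminal segment from $w$ to $z$ respectively; a subpath of a geodesic in $X_s$ (resp.\ $X_t$) is again a geodesic there (recall that $X_u$ is isometrically embedded by \cref{prop:XuProperties}), so this is an admissible configuration of corners and connecting geodesics, but of strictly smaller total length, contradicting the minimality in the choice of $C$. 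The only remaining possibility is $w\in\{x,z\}$: but then, for instance, $w=z$ gives $z\in\gamma_s\subseteq X_s$ and hence $z\in X_r\cap X_s\cap X_t$, while $w=x$ gives $x\in\gamma_t\subseteq X_t$ and hence $x\in X_r\cap X_s\cap X_t$, both impossible in the case under consideration. The same argument applies verbatim to the pairs $\{\gamma_t,\gamma_r\}$ and $\{\gamma_r,\gamma_s\}$. Hence $C$ is a simple cycle of length at least three.

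Finally, a surface $S$ spanned by a simple cycle of length at least three cannot be a single vertex or a single edge: a triangulated $2$-disc whose boundary is an honest cycle of length at least $3$ must contain at least one $2$-simplex (otherwise $S$ would be at most one-dimensional, hence, being a disc, a tree, which contains no simple cycle of length at least $3$ to serve as its boundary). Therefore $S$ contains at least one $2$-simplex, which is exactly the assertion. I expect the main obstacle to be nothing deep, but rather the careful bookkeeping in the case analysis of the core step: one must make sure that every way two of the three geodesics can intersect is accounted for and leads either to a shortening (contradicting length-minimality) or to a vertex of $X_r\cap X_s\cap X_t$ (contradicting the case we are in).
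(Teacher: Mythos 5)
Your proof is correct and follows essentially the same route as the paper: both arguments reduce everything to \cref{lemma:fixpoint} via the observation that degeneracy of the configuration forces $X_r\cap X_s\cap X_t\neq\emptyset$. You are in fact somewhat more careful than the paper's three-line proof, since you explicitly verify (using length-minimality of $C$) that the three geodesics meet only at the corners, so that $C$ is a genuine simple cycle of length at least three — a point the paper's setup in \cref{sec:surfaceConstruction} takes for granted.
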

\begin{proof} 
	Suppose for a contradiction that $S$ does not contain a $2$-simplex and thus is either a single vertex or an edge. In this case, at least two of the three corners of $S$ agree and we have that $X_s\cap X_t\cap X_r\neq\emptyset $. But then \cref{lemma:fixpoint} implies that the action {stabilizes a simplex}  which is a contradiction.
	Hence $S$ contains at least one $2$-simplex.  
\end{proof}

\subsection{The nondegenerate case}

Under the assumption that $\Gamma$ acts {without stabilizing a simplex} on a systolic complex $X$ - and thus that $S$ is nondegenerate - we prove two lemmata. While \cref{lemma:3} will only be used once in the proof of Lemma~\ref{lemma:commute1} the second Lemma~\ref{lemma:4} will be crucial in numerous proofs throughout the paper to show nonexistence of certain diagonals.

\begin{lemma}[Adjacency]\label{lemma:3}
	Suppose $\Gamma$ acts {without stabilizing a simplex} on a systolic complex $X$. Let $S$ be the surface constructed in \cref{sec:surfaceConstruction}. Fix $u \in \{r,s,t\}$ and suppose that $a$ is an inner vertex of the side $\gamma_u$ of defect 1. Then the unique neighbor of $a$ in the interior of $S$ is adjacent to $a^u$.
\end{lemma}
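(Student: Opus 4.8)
The plan is to read off the two triangles of $S$ incident to $a$, use membership $a\in X_u$ together with \cref{lemma:2} to produce the adjacencies $a^-\sim a^u$ and $a^u\sim a^+$ (where $a^-$ and $a^+$ denote the two neighbours of $a$ along $\gamma_u$), and then close a short cycle by $6$-largeness of $X$.

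First I would describe the local picture at $a$. Since $a$ is an inner vertex of the side $\gamma_u$, it lies on $\partial S$ and its two neighbours $a^-,a^+$ along $\gamma_u$ also lie on $\partial S$; moreover all of $a,a^-,a^+$ belong to $\gamma_u\subset X_u$. The hypothesis $\defect(a)=1$ means $a$ is contained in exactly two triangles of $S$, so, $S$ being a triangulated disc, the link of $a$ in $S$ is a path with two edges; hence those two triangles are $(a^-,a,b)$ and $(a,b,a^+)$ for a single vertex $b$, namely the unique vertex for which $(a,b)$ is an interior edge of $S$ — this is the ``unique neighbor of $a$ in the interior of $S$'' in the statement. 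In particular $b\sim a^-$ and $b\sim a^+$.

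Next I would exploit $a\in X_u$. By definition of $X_u$ either $a^u=a$, in which case $b\sim a=a^u$ and we are done, or $a^u\sim a$; assume the latter. If $a^u$ happens to coincide with one of $a^-$, $a^+$ or $b$, we are again immediately done, since $b$ is adjacent to all three; so from now on assume $a^u$ is distinct from $a$, $b$, $a^-$ and $a^+$. Since $(a^-,a)$ and $(a,a^+)$ are edges of $X_u$, two applications of \cref{lemma:2} give $a^-\sim a^u$ and $a^u\sim a^+$.

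Finally I would close the square. The four pairwise distinct vertices $b,a^-,a^u,a^+$ satisfy $b\sim a^-$, $a^-\sim a^u$, $a^u\sim a^+$ and $a^+\sim b$, so they form a $4$-cycle in the systolic, hence $6$-large, complex $X$, which must therefore carry a diagonal: either $b\sim a^u$ or $a^-\sim a^+$. The second is impossible, because $a^-$ and $a^+$ lie at distance $2$ along the geodesic $\gamma_u$ and $X_u$ is isometrically embedded in $X$ by \cref{prop:XuProperties}, so $d_X(a^-,a^+)=2$. Hence $b\sim a^u$, as claimed. The only point that needs attention is the enumeration of the coincidences among $a,a^u,a^-,a^+,b$ before the $6$-largeness step; each such coincidence trivially yields the conclusion, so this is bookkeeping rather than a genuine obstacle, and the argument reduces to a single application of $6$-largeness once the two triangles at $a$ have been identified.
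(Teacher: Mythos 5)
Your proof is correct and follows essentially the same route as the paper's: identify the two triangles at the defect-$1$ vertex, use \cref{lemma:2} to connect $a^u$ to both neighbours of $a$ on $\gamma_u$, and resolve the resulting $4$-cycle by $6$-largeness, ruling out the other diagonal because $\gamma_u$ is a geodesic. Your extra bookkeeping of possible coincidences among $a^u$, $b$, $a^\pm$ is a minor tidying of a step the paper leaves implicit.
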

\begin{proof} 
	If $a=a^u$, then there is nothing to prove, so suppose otherwise. Let $b$ and $c$ be the neighbors of $a$ on $\gamma_u$. By Lemma~\ref{lemma:2} we have that $b \sim a^u \sim c$.  As $a$ has defect $1$ it has a unique neighbor $d$ in the interior of $S$ and $b\sim d\sim c$. 
	But then there is a 4--cycle on the vertices $(b,d,c,a^u)$ which has to have a diagonal. However, $b$ and $c$ are not adjacent, as $\gamma_u$ is a geodesic. But then  $a^u$ must be adjacent to $d$. 
\end{proof}

\begin{lemma}[nonadjacency] \label{lemma:4}
	Suppose $\Gamma$ acts {without stabilizing a simplex} on a systolic complex $X$. Let $S$ be the surface constructed in \cref{sec:surfaceConstruction}. Fix a vertex $u \in \{r,s,t\}$ and suppose there exist two adjacent vertices $a$ and $b$ on $\gamma_u$, where $a$ is an inner vertex of defect 1. Let $c$  be the unique neighbor of $a$ in $S$ not contained in $\gamma_u$. Then $b$ is not adjacent to $c^u$ and $c$ is not adjacent to $b^u$. In particular $b\neq b^u$. 
\end{lemma}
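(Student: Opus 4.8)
I want to show that if $a$ is an inner vertex of $\gamma_u$ of defect $1$, adjacent to $b\in\gamma_u$, and $c$ is the unique interior neighbor of $a$ in $S$, then neither $b\sim c^u$ nor $c\sim b^u$ holds. The strategy is proof by contradiction, invoking \cref{lemma:3} (Adjacency) to pin down the local structure around $a$ and then producing a forbidden diagonal or a contradiction to minimality of the surface.

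First I would set up the local picture. Since $a$ has defect $1$, it has exactly one interior neighbor, which is $c$ by hypothesis, and its two neighbors on $\gamma_u$ are $b$ and some vertex $b'$. By \cref{lemma:3}, $c\sim a^u$. By \cref{lemma:2} applied to the edge $a\sim b$ in $X_u$, we get $b\sim a^u$ (and $b\sim a$, $a^u\sim a$ unless $a=a^u$). Now suppose for contradiction that $b\sim c^u$. Applying $u$ to the already-established adjacencies gives $c^u\sim a$ (from $c\sim a^u$) and $c^u\sim a^u$; combined with the assumed $b\sim c^u$ and the known edges, I expect to find a short cycle — something like $(b,a,c^u)$ or a $4$-cycle $(b, c, a^u, c^u)$ or $(a, b, c^u, a^u)$ — that, together with $6$-largeness and simpliciality of the $u$-action, forces $b$ and $c^u$ to lie in a common simplex, and then forces extra triangles in or near $S$. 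The point is to show that these extra triangles (which must actually be present in $X$, since $X$ is flag) produce a surface with the same boundary $C$ but strictly smaller area, contradicting minimality of $S$ — or else force $a^u$ to be adjacent to $b'$ in a way that lets one shorten or re-route $\gamma_u$, contradicting the minimal-length choice of the sides. The symmetric statement $c\sim b^u$ is immediate by applying $u$ to $b\sim c^u$.

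The "in particular $b\neq b^u$" part should then follow quickly: if $b=b^u$, then the asserted non-adjacency $b\sim c^u$ would read $b^u=b\sim c^u$, i.e. $b\sim c^u$ after applying $u$ to $c\sim b = c\sim b^u$ — more precisely, since $a\sim b$ in $X_u$ we have $c$ (the interior neighbor) and one checks that $b=b^u$ together with $c\sim a^u$ forces $c\sim b$, hence $c\sim b^u$, contradicting the first part. So $b\neq b^u$.

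**Expected main obstacle.** The delicate point is identifying \emph{which} short cycle to use and making sure it genuinely has length $\le 5$ without already possessing a diagonal that trivializes the argument — the case analysis has to separate $a=a^u$ from $a\neq a^u$, and in the latter case track whether $c=c^u$ or not, since several of the vertices $a,a^u,c,c^u,b,b^u$ may coincide or already be adjacent. The real work is then to convert the forced adjacency (from $6$-largeness) into either a strictly smaller spanning surface (contradicting minimality of area of $S$) or a strictly shorter replacement for one of the three geodesic sides (contradicting minimality of the length of $C$); I expect the former, via an argument that the simplex containing $b$ and $c^u$ lets one "fold" a triangle of $S$ across the edge $ab$ or excise the vertex $a$ from $S$ as in the proof of \cref{prop:1}.
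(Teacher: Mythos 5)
Your overall strategy (assume $b\sim c^u$, exhibit a short cycle, and play $6$-largeness against the minimality of $S$) is the right flavour, but the proposal stops exactly where the work begins, and none of the cycles you actually name can carry the argument. Each of $(b,c,a^u,c^u)$ and $(a,b,c^u,a^u)$ already possesses a diagonal independently of the contradiction hypothesis: $b\sim a^u$ by \cref{lemma:2}, and $a\sim c^u$ is equivalent to $c\sim a^u$, which is your own application of \cref{lemma:3}. So $6$-largeness extracts nothing new from them, and $(b,a,c^u)$ is merely a triangle. You also never pin down the concrete forbidden configuration the argument is supposed to land on: \emph{extra triangles giving a smaller surface} is not substantiated, since triangles produced this way need not assemble into a disc spanned by a competitor boundary cycle of smaller area. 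This is a genuine gap, not a presentational one.

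For comparison, the paper's proof uses two ingredients that are absent from your sketch. First, $c\nsim c^u$: otherwise $c\in X_u$, and since $a$ has defect $1$ it lies in exactly the two triangles $(d,a,c)$ and $(a,b,c)$ (where $d$ is the \emph{other} neighbour of $a$ on $\gamma_u$, your $b'$), so $d\sim c\sim b$ and one may replace $a$ by $c$ on $\gamma_u$, obtaining a geodesic of the same length whose spanned surface drops those two triangles --- contradicting area-minimality. Second, the relevant cycle is the pentagon $(d,c,b,c^u,d^u)$ (a square if $d=d^u$), closed because $d\in X_u$. Here $b\nsim d$ since $\gamma_u$ is a geodesic, and $b\nsim d^u$ because $b,d^u\in X_u$ and \cref{lemma:2} would then force $b\sim d^{uu}=d$. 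With these two diagonals excluded, $6$-largeness forces the diagonal $(c,c^u)$ in every case, contradicting the first point. Your treatment of the clause $b\neq b^u$ is essentially correct once one observes that $c\sim b$ holds automatically from the defect-$1$ configuration, so $b=b^u$ would immediately violate $c\nsim b^u$.
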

\begin{proof}
	Note first that $c$ is not adjacent to $c^u$ by minimality of the area of $S$. Let $d$ be the neighbor of $a$ other than $b$ on $\gamma_u$ (which exists as $a$ is an inner vertex), and assume that $c$ is adjacent to $b^u$. As the action is simplicial this is equivalent to the case where $b$ is adjacent to $c^u$. Then the path $(d,c,b,c^u,d^u)$ forms a 4- or a 5-cycle (depending on whether $d=d^u$ or not). In each case, $b$ is adjacent with neither $d$ by minimality of the length of $\gamma_u$ and also not adjacent to $d^u$ by Lemma~\ref{lem:u-stableSplx}. Hence it would follow that $c$ is adjacent with $c^u$, which is a contradiction.
\end{proof}


\section{The surface $S$ is not a $2$-simplex}\label{sec:notasimplex}

In this section we will show, see \cref{prop:notasimplex}, that for an action without a $\Gamma$- invariant simplex the surface $S$ will not consist of a single simplex. 

For the rest of this section suppose that $\Gamma$ acts without stabilizing a simplex. 
Thus \cref{prop:existence} implies that there is a nondegenerate surface $S$ satisfying the properties of \cref{sec:surfaceConstruction}. We will use the Notation as introduced in  \cref{notation:triangle-groups} in particular.

\begin{lemma}[Existence of many edges]\label{lem:2-simplex_2_help1}
	If $S$ is a $2$-simplex, then $a \sim b$ for all vertices $a \in x^{\langle r,s\rangle }$ and $b \in y^{\langle r,s\rangle }\cup z^{\langle r,s\rangle }$. 
\end{lemma}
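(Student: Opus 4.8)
The plan is to exploit that $S$ being a $2$-simplex forces its three corners $x$, $y$, $z$ to be pairwise adjacent (they are the vertices of the single triangle), and then to push adjacency around the orbits using \cref{lemma:2} together with $6$-largeness. First I would record the base facts: $x\sim y$, $y\sim z$, $z\sim x$, with $x\in X_r\cap X_s$, $y\in X_s\cap X_t$, $z\in X_r\cap X_t$. Since both $x$ and $y$ lie in $X_s$ and $x\sim y$, \cref{lemma:2} (applied to the clique $\{x,y\}$ in $X_s$) gives $x\sim y^s$ and $x^s\sim y$, and in fact the four vertices $x,y,x^s,y^s$ span an $s$-invariant simplex; in particular $x^s\sim y^s$ as well. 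This already handles the ``$s$-half'' of the orbit $y^{\langle r,s\rangle}$: every vertex of $\{y,y^s\}$ is adjacent to every vertex of $\{x,x^s\}$.

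Next I would bring in $r$. The subtlety is that $y$ need not lie in $X_r$, so one cannot directly invoke \cref{lemma:2} with $r$. Instead, I would use that $x\in X_r$, so $x\sim x^r$, and that the whole orbit $x^{\langle r,s\rangle}$ spans a simplex — this can be assumed by \cref{prop:bicycle} (the bicycle property) and $6$-largeness, exactly as in the proof of \cref{lemma:fixpoint}: if $x^{\langle r,s\rangle}$ does not span a simplex we may replace $x$ by a vertex of the bipartite partner clique $\Sigma$, which still lies in $X_r\cap X_s$ and still spans a simplex. Having arranged that $x^{\langle r,s\rangle}$ is a clique, I want to show each vertex $b\in y^{\langle r,s\rangle}$ is adjacent to each $a\in x^{\langle r,s\rangle}$. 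Write $b=y^g$ for $g\in\langle r,s\rangle$; applying $g^{-1}$ and using that the $\Gamma$-action is simplicial, it suffices to show $y\sim a'$ for every $a'$ in the orbit $x^{\langle r,s\rangle}$ (which $g^{-1}$ permutes). From $x\sim y$ and $x^s\sim y$ (the latter from the previous paragraph) I get $y$ adjacent to $x$ and $x^s$. To reach $x^r$ and $x^{rs}=x^{sr}$, consider the $4$-cycle $(y,x,x^r,\,?)$ — more carefully, I would look at the closed path obtained from $y\sim x\sim x^r$ together with the action of $s$: since $y\sim x$ and $y\sim x^s$, the simplicial action gives $y^r\sim x^r$ and $y^r\sim x^{sr}$ is not yet available, so instead I would argue via $6$-largeness on the $4$-cycle $(x^r, y, x, x^{?})$ using that $x^{\langle r,s\rangle}$ is already a clique to supply the needed diagonal, concluding $y\sim x^r$ and then $y\sim x^{rs}$ similarly.

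Finally, the same argument with $t$ in place of $r$ and $z$ in place of $y$ gives the statement for $b\in z^{\langle r,s\rangle}$: here $z\in X_r$ so $z\sim z^r$, and $z\sim x$ starts the induction, with the clique $x^{\langle r,s\rangle}$ and $6$-largeness propagating adjacency to $z\sim x^s$, $z\sim x^r$, $z\sim x^{rs}$, and then the $\langle r,s\rangle$-action sweeps this to all of $z^{\langle r,s\rangle}$. I expect the main obstacle to be the bookkeeping in the middle step: $y$ and $z$ need not lie in $X_r$, so the clean tool \cref{lemma:2} is unavailable for the generator $r$, and one must instead feed $6$-largeness the right small cycles, using the already-established fact that $x^{\langle r,s\rangle}$ spans a simplex (after the \cref{prop:bicycle} reduction) to guarantee the diagonals land where we need them. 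Everything else is a routine unwinding of \cref{lemma:2} and the simpliciality of the action.
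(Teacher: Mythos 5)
There is a genuine gap, and it sits exactly where you flag the ``bookkeeping in the middle step'': that step is the entire content of the lemma, and your proposal does not supply it. After the easy part (Lemma~\ref{lemma:2} applied to $x,y\in X_s$ gives $y\sim x^s$, and to $x,z\in X_r$ gives $z\sim x^r$; and $x^{\langle r,s\rangle}$ spans a simplex), the only small cycle available to reach $y\sim x^r$ or $z\sim x^s$ is the $4$-cycle $(z,x^r,x^s,y)$, whose edges are $z\sim x^r$, $x^r\sim x^s$ (clique), $x^s\sim y$, $y\sim z$. By $6$-largeness this forces \emph{one} of the two diagonals $z\sim x^s$ or $y\sim x^r$ — not both, and not a specific one. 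Your sketch asserts ``concluding $y\sim x^r$ and then $y\sim x^{rs}$ similarly'' without exhibiting a cycle that forces that particular diagonal; no such cycle exists from the data you have at that point. The paper's proof is built around precisely this asymmetry: it takes the diagonal that does come for free (WLOG $z\sim x^s$), propagates it to all of $x^{\langle r,s\rangle}$ via the $4$-cycle $(z^r,z,x^s,x^{sr})$, and then recovers the \emph{other} corner's adjacencies through the $5$-cycle $(y,z,x^{rs},z^s,y^s)$ and a further diagonal chase using $(y^s,x^{rs},x^r,y)$. Treating $y$ and $z$ by ``the same argument'' independently and symmetrically, as you propose, cannot work, because the two corners are coupled through the single free diagonal of $(z,x^r,x^s,y)$.

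Two smaller points. First, you do not need \cref{prop:bicycle} to get that $x^{\langle r,s\rangle}$ spans a simplex: since $r$ and $s$ commute and $x\in X_r\cap X_s$, this is immediate from \cref{lem:commuting uv}; moreover, replacing $x$ by another vertex mid-proof would be illegitimate here, since the lemma is a statement about the fixed surface $S$ with corner $x$. Second, your closing suggestion to run ``the same argument with $t$ in place of $r$'' for $z^{\langle r,s\rangle}$ is off target: the orbit in the statement is still under $\langle r,s\rangle$, and the relevant difficulty for $z$ is that $z\notin X_s$ (not anything involving $t$).
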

\begin{proof}
	By Lemma~\ref{lemma:2}, $z \sim x^r$ and $y \sim x^s$ and, since $r$ and $s$ commute, $x^{\langle r,s\rangle }$ spans a simplex which is stabilized by $r$ and $s$ according to Lemma~\ref{lem:commuting uv}. Thus $X$ contains the closed path $(z,x^r,x^s,y)$. Then either $z \sim x^s$ or $y \sim x^r$ by $6$-largeness. Without loss of generality we assume that $z \sim x^s$. Then $X$ contains the closed path $C'=(z^r,z,x^s,x^{sr})$ and hence $x^s\sim z^r$ and $ z \sim x^{rs}=x^{sr}$. Therefore $z \sim a$ for all $a\in x^{\langle r,s\rangle }$. The action of $\Gamma$ is simplicial which implies that $a \sim b$ for all $a \in x^{\langle r,s\rangle }$ and $b \in z^{\langle r,s\rangle }$.
	
	The closed path $C=(y,z,x^{rs},z^s,y^s)$ in $X$ is either a $4$-- or a $5$--cycle depending on whether $y = y^s$ or not.
	We show that it has the diagonal $(y^s,x^{rs})$. Note that $z \nsim z^s$ since otherwise $z$ would be contained in $X_s\cap X_t \cap X_r$ and $S$ would not be minimal. If $C$ has length $4$ it follows by $6$-largeness that $y = y^s \sim x^{rs}$. If $C$ has length $5$ and $(y^s,x^{rs})$ is not in $X$, then $C$ has the diagonal $(y^s,z)$, as otherwise $(y^s,z,x^{rs},z^s)$ would form a $4$-cycle without diagonals. nonexistence of $(y^s,z)$ implies nonexistence of the diagonal $(y,z^s)$. Thus $C$ would contains at most one diagonal which contradicts $6$-largeness. 
	
	We conclude that $X$ contains the closed path $(y^s,x^{rs},x^r,y)$ and by $6$-largeness follows that $y^s \sim x^r$ and $y \sim x^{rs}$. Furthermore $y \sim x^s$ by Lemma~\ref{lemma:2}. Hence $y \sim a$ where $a\in x^{\langle r,s\rangle }$. Since $r$ and $s$ commute, $x^{\langle r,s\rangle }$ spans a simplex stabilized by $r$ and $s$ by Lemma~\ref{lem:commuting uv}. The action of $\Gamma$ is simplicial, therefore $a \sim b$ for all $a \in x^{\langle r,s\rangle }$ and $b \in y^{\langle r,s\rangle }$.
\end{proof}

We show that we can reduce the considerations to the case in which $y^{\langle s,t\rangle }$ and  $z^{\langle r,t\rangle }$ span a simplex. 

\begin{lemma}[Existence of a simplex]\label{lem:simplexfall}
	If $S$ is a $2$-simplex, then we may assume that $y^{\langle s,t\rangle }$ and $z^{\langle r,t\rangle }$ span a simplex.
\end{lemma}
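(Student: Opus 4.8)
The plan is to run the same kind of ``propagate adjacencies via $6$-largeness and the $\Gamma$-action'' argument that appears in \cref{lem:2-simplex_2_help1}, but now applied simultaneously to the two orbit cliques sitting over $y$ and over $z$. Assume $S$ is a $2$-simplex. By \cref{lem:2-simplex_2_help1} every vertex of $x^{\langle r,s\rangle}$ is adjacent to every vertex of $y^{\langle r,s\rangle}\cup z^{\langle r,s\rangle}$; in particular the three orbit cliques $x^{\langle r,s\rangle}$, $y^{\langle r,s\rangle}$, $z^{\langle r,s\rangle}$ are pairwise joined along all edges, so their union spans a simplex $\sigma$ of $X$. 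The point is that then $y^{\langle s,t\rangle}$ (resp.\ $z^{\langle r,t\rangle}$) is ``almost'' forced to span a simplex: it lives inside $X_s\cap X_t$ (resp.\ $X_r\cap X_t$), so \cref{prop:bicycle} applies and gives the dichotomy — either it already spans a simplex, or it forms a diagonal-free $|H|$-cycle $C$ that is completely joined to an $H$-stabilised simplex $\Sigma$ whose vertices again lie in $X_s\cap X_t$ (resp.\ $X_r\cap X_t$).

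The key step is to rule out the cyclic alternative by exploiting the simplex $\sigma$ we already produced, or — what amounts to the same thing — to show that in the cyclic case we may \emph{replace} the original choice of corner $y$ (resp.\ $z$) by a vertex of $\Sigma$ without increasing the length of $C$ or the area of $S$. Concretely: if $y^{\langle s,t\rangle}$ is a diagonal-free cycle joined to an $H$-stabilised simplex $\Sigma \subset X_s\cap X_t$, pick $y' \in \Sigma$. Since $y' \in X_s\cap X_t$ it is a legitimate choice of the corner lying in $X_s\cap X_t$, and since $y'$ is adjacent to the original $y$ (the bipartite join) and to $y^s, y^t$, one checks that the sides $\gamma_s, \gamma_t$ can be rerouted through $y'$ without increasing total length; by minimality of $C$ this rerouting does not create a shorter cycle, and minimality of $S$ forbids a strictly smaller area, so we land in a situation with the same combinatorics but with $(y')^{\langle s,t\rangle}=\Sigma^{\langle s,t\rangle}=\Sigma$ a simplex. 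The analogous move replaces $z$ by a vertex of the corresponding $\Sigma' \subset X_r\cap X_t$. After both replacements we ``may assume'' $y^{\langle s,t\rangle}$ and $z^{\langle r,t\rangle}$ span simplices, which is exactly the claim. One should also double-check that the $r,s$-side is unaffected, i.e.\ that the hypotheses feeding \cref{lem:2-simplex_2_help1} still hold after the replacement; since that lemma's statement is symmetric in the roles of $y,z$ and only uses that $r,s$ commute, re-applying it to the new corners is harmless.

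The main obstacle I expect is the bookkeeping in the replacement step: one has to verify that after substituting $y'$ for $y$ (and $z'$ for $z$) the triple $(x,y',z')$ still satisfies \emph{all} the defining conditions of \cref{sec:surfaceConstruction} — pairwise distinctness where needed, the minimality of the bounding cycle's length, and the minimality of the area of $S$ — and that $S$ is still a $2$-simplex (so that \cref{lem:2-simplex_2_help1} remains applicable). In particular one must argue that the new cycle through $y'$ is not shorter than the old one (otherwise we contradict the original minimal choice rather than reducing to the desired case), which uses that $y'\sim y$ together with $y'\sim y^s, y'\sim y^t$ so that detours through $y'$ cost at most what detours through $y$ cost. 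Handling the degenerate sub-cases — e.g.\ $y=y^s$ or $y=y^t$, where the orbit $y^{\langle s,t\rangle}$ is small and \cref{prop:bicycle} forces the simplex alternative outright — is routine and should be dispatched first. Once the cyclic alternative is excluded in both places, the statement ``we may assume $y^{\langle s,t\rangle}$ and $z^{\langle r,t\rangle}$ span a simplex'' follows immediately.
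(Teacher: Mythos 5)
Your overall strategy is the same as the paper's: invoke \cref{prop:bicycle} to produce a replacement vertex $\bar z\in X_r\cap X_t$ (resp.\ $\bar y\in X_s\cap X_t$) whose $\langle r,t\rangle$-orbit (resp.\ $\langle s,t\rangle$-orbit) spans a simplex, and then swap it in for the original corner. However, there is a genuine gap at exactly the point you defer to ``one checks'': since $S$ is a $2$-simplex, the sides are single edges, so replacing $z$ by $\bar z$ is legitimate only if $\bar z$ is adjacent to \emph{both} $x$ and $y$ --- otherwise the new bounding cycle is strictly longer than $3$ and the minimality of the cycle length in the construction of \cref{sec:surfaceConstruction} forbids the replacement. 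The bicycle property only gives you that $\bar z$ is adjacent to the vertices of the orbit cycle $z^{\langle r,t\rangle}$ (e.g.\ $z$, $z^r$, $z^t$), not to $x$ or $y$; and your remark that ``detours through $y'$ cost at most what detours through $y$ cost'' is backwards --- a path $x\to y\to y'$ has length $2$, which is strictly more than the edge $(x,y)$. Establishing $x\sim\bar z$ and $y\sim\bar z$ is precisely the content of the paper's proof, and it requires several pages of $4$- and $5$-cycle analysis combining $6$-largeness, \cref{lemma:2}, \cref{lem:2-simplex_2_help1} and the diagonal-free structure of the orbit cycle from \cref{prop:bicycle}; it is not routine bookkeeping.

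A secondary error: from \cref{lem:2-simplex_2_help1} you conclude that $x^{\langle r,s\rangle}\cup y^{\langle r,s\rangle}\cup z^{\langle r,s\rangle}$ spans a simplex $\sigma$. That lemma only joins $x^{\langle r,s\rangle}$ to the other two orbits; it says nothing about edges between $y^{\langle r,s\rangle}$ and $z^{\langle r,s\rangle}$, nor even that $y^{\langle r,s\rangle}$ is a clique (note $y\notin X_r$ in general, so $y$ need not be adjacent to $y^r$). Since you do not actually use $\sigma$ afterwards this is not fatal, but it should be removed. Finally, note that the paper performs the two replacements in a specific order ($z$ first, then $y$) and observes that the second replacement does not disturb the first because $\bar z$ is unchanged; your symmetric phrasing glosses over this, but that part is easily repaired.
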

\begin{proof}
	We assume that $z^{\langle r,t\rangle }$ does not span a simplex. Then there exists by Proposition~\ref{prop:H} a vertex $\bar z \in X_r \cap X_t$ which is adjacent to $z$ so that $\bar z^{\langle r,t\rangle }$ spans a simplex. We will show that $x$ and $y$ are adjacent to $\bar z$. Then $x$, $y$ and $\bar z$ span a surface $S'$ with the same minimality properties as $S$ and $\bar z^{\langle r,t\rangle }$ spans a simplex. If $y^{\langle r,t\rangle }$ spans a simplex, $S'$ is a surface we are looking for. Otherwise repeat the arguments after replacing $z$ with $y$ and $r$ with $s$. We  then obtain a surface $S''$ spanned by $x$, $\bar y$ and $\bar z$ with the same minimality properties as $S'$ in which $\bar{y}^{\langle r,t\rangle }$ spans a simplex. As the vertex $\bar z$ does not change, also $\bar z^{\langle r,t\rangle }$ spans a simplex in $S''$ and $S''$ is a surface we are looking for. 
	
	First we will show that $x\sim \bar z$. Suppose that $x$ is not adjacent to $\bar z$. 
	
	Lemma~\ref{lemma:2} implies that $y\sim z^t$ and from Proposition~\ref{prop:H} one can conclude that  $\bar z$ is adjacent to $z^t$ and $z^r$. Hence $X$ contains the $5$-cycle $C: =(x,y,z^t,\bar z,z^r)$.  By  Proposition~\ref{prop:H}, $z^r$ and $z^t$ are not adjacent. As $x \nsim \bar z$, $x$ is not adjacent to $z^t$ as otherwise $(x,z^t,\bar z,z^r)$ would be an $4$-cycle without diagonals. Hence the remaining diagonals  $(y,z^r)$ and $(y,\bar z)$ of $C$ are contained in $X$. 
	
	We now construct a $5$--cycle $C'$. We have shown that $y\sim \bar z$. By Proposition~\ref{prop:H}, $\bar z \sim z^{tr}$. We have seen that $y \sim z^t$  and hence $z^{tr}\sim y^r$.  Lemma~\ref{lem:2-simplex_2_help1} implies that  $x\sim y^r$. 
	One may then conclude that the complex $X$ contains the $5$-cycle $C':=(x,y,\bar z,z^{tr},y^r)$.
	Recall that $y \nsim y^r$ and $x \nsim \bar z$ by assumption. Hence $\bar z \nsim y^r$ as otherwise $(y^r,x,y,\bar z)$ would form a $4$-cycle without diagonals. The remaining two diagonals $(x,z^{tr})$ and $(y,z^{tr})$ of $C'$ are contained in $X$. Then $z^t\sim y^r$ and $(x,y,z^t,r^y)$ forms a $4$-cycle without diagonals which is a contradiction.  Therefore $x\sim \bar z$. 
	
	It now remains to show that $y\sim \bar z$. Lemma~\ref{lemma:2} implies $y\sim z^t$ and by  Proposition~\ref{prop:H} one has $\bar z\sim z^t$ and $z \sim z^r$. As $x\sim \bar z$, $X$ contains the $4$-cycle $(x,y,z^t,\bar z)$. If it contains the diagonal $(y,\bar z)$, we are done. Otherwise it contains the diagonal $(x,z^t)$ by $6$-largeness. 
	Then $(x,z^t,\bar z^t,z^r)$ forms a $4$-cycle. By Lemma~\ref{lemma:2} the vertex $x$ is adjacent to $z^r$ and $z^t\sim \bar z^t$. By Proposition~\ref{prop:H} we have  $\bar z^t\sim z^r$.  Furthermore Proposition~\ref{prop:H} implies that this cycle does not contain the diagonal $(z^r \nsim z^t)$. Hence $x \sim \bar z^t$ and also $x^t\sim \bar z$. 
	
	Then $X$ contains the closed path $(x,y,y^t,x^t,\bar z)$ which does not contain the diagonal $(x,x^t)$ as otherwise $x\in X_t$. If $x \nsim y^t$ the $5$--cycle $(x,y,y^t, x^t, \bar z)$ has to have the diagonals $(y, \bar z)$ and $(y^t, \bar z)$.  If $x \sim y^t$ the complex $X$ contains the $4$-cycle $(x,y,x^t,\bar z)$ and again $y$ is adjacent to $\bar z$. 
\end{proof}

\begin{lemma}[Existence of edges]\label{lem:2-simplex_2_help2}
	If $S$ is a $2$-simplex and $X$ contains the edge $(y^s,z)$ then $y^{s}\sim z^{t}$.  
\end{lemma}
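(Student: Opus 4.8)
The goal is to show that if $S$ is a $2$-simplex and the edge $(y^s, z)$ is present, then $y^s \sim z^t$. The strategy is to exhibit a short cycle containing $y^s$ and $z^t$ whose other diagonals are ruled out, forcing the remaining diagonal $(y^s, z^t)$ to exist by $6$-largeness. I would first assemble all adjacencies that come for free: since $S$ is a $2$-simplex we have the corners $x, y, z$ forming a triangle, and by \cref{lem:simplexfall} we may assume $y^{\langle s,t\rangle}$ and $z^{\langle r,t\rangle}$ span simplices stabilized by the relevant involutions (using \cref{lem:commuting uv} for the commuting pair $r,s$, and noting $y^{\langle s,t\rangle}$, $z^{\langle r,t\rangle}$ are simplices by assumption). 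From \cref{lem:u-stableSplx} applied to the edge $(y,z)$ in $X_t$ we get $y \sim z^t$ and $z \sim y^t$; applied to $(x,z)$ in $X_r$ we get $x \sim z^r$, $z \sim x^r$; and \cref{lem:2-simplex_2_help1} gives a wealth of edges between $x^{\langle r,s\rangle}$ and $y^{\langle r,s\rangle} \cup z^{\langle r,s\rangle}$.

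Next I would build the cycle. The natural candidate is something like $(y^s, z, z^t, y^{st})$ or a $5$-cycle obtained by inserting a corner: starting from the hypothesized edge $(y^s,z)$, apply $t$ to get $(y^{st}, z^t)$, and use that $z \sim z^t$ is \emph{false} by minimality of $S$ (otherwise $z \in X_r \cap X_s \cap X_t$, contradicting \cref{lemma:fixpoint}), so the relevant $4$-cycle $(y^s, z, z^t, y^{st})$ must have the \emph{other} diagonal, namely $(y^s, z^t)$, unless $y^s \sim y^{st}$ — but $y^s \sim y^{st}$ means $y^s \in X_t$, which needs to be excluded, and if it cannot be excluded directly one passes to a $5$-cycle by going through $y$ or $y^t$. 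More precisely I expect to use the closed path $(y^s, z, z^t, (y^t)^s, y^t)$ or similar, where the vertex $(y^s)^t = y^{st}$ and $(z)^t = z^t$ connect up via the edge $z \sim y^t$ obtained above; one then argues the cycle is a $4$- or $5$-cycle (depending on $y^s = y^{st}$ or not, i.e.\ whether $y^s$ is $t$-fixed), rules out all diagonals except $(y^s, z^t)$ using \cref{lemma:4}-type nonadjacency arguments together with $z \nsim z^t$, and concludes.

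The main obstacle will be the bookkeeping of which diagonals to exclude in the $5$-cycle case: one needs that $y^s$ is not adjacent to whichever "intermediate" vertex was inserted (a statement in the spirit of \cref{lemma:4}, that an inner vertex of a geodesic of defect $1$ does not connect its neighbor to the reflected neighbor), and that the degenerate diagonal would force either $y^s \in X_t$ or $z \in X_r \cap X_s \cap X_t$, both of which contradict our standing assumptions or the minimality of $S$. A secondary subtlety is that $y^s$ might coincide with $y$ (if $y \in X_s$ is $s$-fixed) or be adjacent to $y$; in that case the statement either follows immediately from edges already established or reduces to one of the cycles above with fewer vertices. Once the correct short cycle is identified and the two "bad" diagonals are excluded, $6$-largeness delivers $y^s \sim z^t$ and the proof closes.
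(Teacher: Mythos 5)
You identified the right $4$-cycle, namely the closed path $(z, y^s, y^{st}, z^t)$, but your analysis of it contains a genuine error. You propose to exclude $z \sim z^t$ by claiming it would force $z \in X_r \cap X_s \cap X_t$. This is false: $z$ lies in $X_t \cap X_r$ by construction, so $z = z^t$ or $z \sim z^t$ holds automatically and says nothing about membership in $X_s$ (you have confused this with the true statement $z \nsim z^s$, which is what is used in the proof of \cref{lem:2-simplex_2_help1}). Moreover $(z, z^t)$ is not a diagonal of the cycle at all --- it is one of its four edges, precisely the edge needed to close the path. Likewise your worry that ``$y^s \sim y^{st}$ needs to be excluded'' is backwards: by \cref{lem:simplexfall} the orbit $y^{\langle s,t\rangle}$ spans a simplex, so $y^s = y^{st}$ or $y^s \sim y^{st}$, and this is exactly the edge that makes $(z, y^s, y^{st}, z^t)$ a closed path; it is not an obstruction.

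Once the cycle is set up correctly, no diagonal needs to be excluded. The two diagonals of $(z, y^s, y^{st}, z^t)$ are $(y^s, z^t)$ and $(z, y^{st})$, and the second maps to the first under $t$ because the action is simplicial; so whichever diagonal $6$-largeness provides, you obtain $y^s \sim z^t$. The degenerate cases in which the closed path is shorter than a $4$-cycle ($z = z^t$ or $y^s = y^{st}$) give the conclusion immediately. This is the paper's proof. Your version, as written, rests on the false nonadjacency $z \nsim z^t$ and on misidentifying edges of the cycle as diagonals, so it does not go through without this repair.
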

\begin{proof} 	
	Observe that by Lemma~\ref{lem:simplexfall} the orbit $y^{\langle s,t\rangle }$ spans a simplex. Since $y^{s}\sim z$ in $X$, there exists the closed path $(z,y^{s},y^{st},z^{t})$. If this path is not a $4$-cycle, we are done. Otherwise it contains one of the two possible diagonals by $6$-largeness. But then $y^{s} \sim z^{t}$.
\end{proof}

\begin{lemma}[Existence of more edges]\label{lem:2-simplex_help3}
	If $S$ is a $2$-simplex and $X$ contains the edge $(x,z^{rt})$, then $y \sim z^{r}$.   
\end{lemma}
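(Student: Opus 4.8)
The plan is to mimic the pattern of the two preceding lemmas (Lemmas~\ref{lem:2-simplex_2_help2} and its predecessor): start from the hypothesis that $X$ contains the edge $(x,z^{rt})$, combine it with the fact that $S$ being a $2$-simplex forces lots of adjacencies via Lemma~\ref{lem:2-simplex_2_help1}, Lemma~\ref{lem:simplexfall} (so $z^{\langle r,t\rangle}$ spans a simplex) and Lemma~\ref{lemma:2}, and then use $6$-largeness on a short closed path to deduce $y\sim z^r$. Concretely, I would first record the ``free'' adjacencies: since $S$ is a $2$-simplex we have $z\sim x^r$, $y\sim x^s$ from Lemma~\ref{lem:2-simplex_2_help1}; since $r,t$ commute and $z^{\langle r,t\rangle}$ spans a simplex (Lemma~\ref{lem:simplexfall}), this simplex is stabilized by $r$ and $t$ by Lemma~\ref{lem:commuting uv}; and $y\sim z^t$ by Lemma~\ref{lemma:2} together with $6$-largeness on the $4$-path $(y,z,z^r,z^{rt})$ (using the hypothesis $x\sim z^{rt}$ to close a path through $x$).

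The core step is to produce a short closed path containing the desired diagonal $(y,z^r)$. Starting from $x\sim z^{rt}$, applying the simplicial action of $t$ gives $x^t\sim z^{r}$ (since $(z^{rt})^t=z^r$), and applying $r$ to $x\sim z^{rt}$ gives $x^r\sim z^{t}$. Now I would assemble the closed path $(y,z,x^r,z^t,\dots)$ or better the $4$- or $5$-path whose vertices are among $\{y, z^r, x, z^{rt}, z^t\}$: e.g. $(y, x, z^{rt}, z^r, z)$, which is a closed path in $X$ because $y\sim x$ (corners of the $2$-simplex), $x\sim z^{rt}$ (hypothesis), $z^{rt}\sim z^r$ (the simplex $z^{\langle r,t\rangle}$), $z^r\sim z$ (same simplex, since $z^r\sim z$ holds as $z\in X_r$) and $z\sim y$ (corners). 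By Lemma~\ref{lem:simplexfall}/Proposition~\ref{prop:H} and $6$-largeness this $5$-cycle has diagonals; I then rule out the ``wrong'' diagonals (such as $x\sim z^r$, which would contradict minimality or Lemma~\ref{lem:u-stableSplx}, or $z^{rt}\sim y$ handled separately) so that the surviving diagonal must be $(y,z^r)$, or at worst chase one more short cycle exactly as in Lemma~\ref{lem:simplexfall}.

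The main obstacle I anticipate is the bookkeeping of which diagonals can be excluded: as in the proof of Lemma~\ref{lem:simplexfall}, several of the auxiliary adjacencies (like $x\sim z^r$, $x\sim z^t$, or $y\sim y^r$) must be ruled out using minimality of $S$, geodesicity of the sides, and Lemma~\ref{lem:u-stableSplx}, and getting a consistent choice of $4$- versus $5$-cycle (depending on whether various vertices are fixed by an involution) is where the argument is delicate rather than deep. I would therefore organize the proof by first fixing all adjacencies that follow unconditionally, then treating the case distinction ``$x\sim z^t$ or not'' (exactly the branch that appears in Lemma~\ref{lem:simplexfall}), and in each branch pointing to a single $4$- or $5$-cycle whose only admissible diagonal is $(y,z^r)$. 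I expect the whole argument to be no longer than the proof of Lemma~\ref{lem:simplexfall} and to reuse its $5$-cycle manipulations almost verbatim.
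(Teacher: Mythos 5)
There is a genuine gap in the core step. The $5$-cycle you propose, $(y,x,z^{rt},z^{r},z)$, cannot yield the conclusion, because most of its diagonals are already forced to be present rather than excludable: $x\sim z$ is an edge of the $2$-simplex $S$; $x\sim z^{r}$ holds automatically by Lemma~\ref{lem:u-stableSplx} (both $x$ and $z$ lie in $X_r$ and are adjacent), so your plan to \emph{rule out} $x\sim z^{r}$ ``by minimality or Lemma~\ref{lem:u-stableSplx}'' is exactly backwards --- that lemma \emph{guarantees} the edge; and $z\sim z^{rt}$ holds because $z^{\langle r,t\rangle}$ spans a simplex by Lemma~\ref{lem:simplexfall}. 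Since the cycle already has diagonals, $6$-largeness imposes no further constraint and gives you no access to the two genuinely unknown chords $(y,z^{r})$ and $(y,z^{rt})$. No amount of bookkeeping on this particular cycle will produce $y\sim z^{r}$.

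The missing idea is that the useful cycle must pass through the $t$-translates $x^{t}$ and $y^{t}$. The paper starts from $x\sim z^{r}$ (Lemma~\ref{lem:u-stableSplx}), applies $t$ to get $x^{t}\sim z^{rt}$, and combines this with the hypothesis $x\sim z^{rt}$ to form the closed path $(x,y,y^{t},x^{t},z^{rt})$. Here the excludable chords are $x\nsim x^{t}$ (otherwise $x\in X_r\cap X_s\cap X_t$, contradicting minimality of $S$) and, in the main case, $x\nsim y^{t}$; $6$-largeness then forces the diagonals $(y,z^{rt})$ and $(y^{t},z^{rt})$, and applying $t$ to the latter gives $y\sim z^{r}$. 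The subcase $x\sim y^{t}$ is handled with the $4$-cycle $(x,y^{t},x^{t},z^{rt})$. Your instinct to reuse the $5$-cycle machinery of Lemma~\ref{lem:simplexfall} is reasonable, but the cycle has to be chosen so that its unknown diagonals include the target edge and its remaining diagonals are ones that minimality actually forbids; your candidate satisfies neither condition.
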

\begin{proof}
	Let $(x,z^{rt})$ be contained in $X$.
	First we consider the case where $x \nsim y^{t}$. By Lemma~\ref{lemma:2}, $x \sim z^{r}$ and thus $x^{t}~\sim z^{rt}$. Hence $X$ contains the closed path $(x,y,y^{t},x^{t},z^{rt})$. It is a $4$- or $5$-cycle depending on whether $y = y^{t}$ or not. We consider the more difficult case where $y \ne y^{t}$. Then $x \nsim y^{t}$ by assumption and $x \nsim x^{t}$ since otherwise $x \in X_r\cap X_s \cap X_t$. But this contradicts minimality of $S$. By $6$-largeness the cycle has the two diagonals $(y,z^{rt})$ and $(y^{t},z^{rt})$ which implies $y\sim z^{r}$ using the $\Gamma$-action. 
	
	Consider now the remaining case where  $x \sim y^{t}$. Recall that $x^{t} \sim z^{rt}$ by Lemma~\ref{lemma:2}. Hence $X$ contains the $4$-cycle $(x,y^t,x^t,z^{rt})$. Since $x \nsim x^{t}$ it contains by $6$-largeness the diagonal $(y^{t},z^{rt})$ and hence $y \sim z^{r}$. 
\end{proof}

\begin{lemma}[nonexistence of edges]\label{lem:2-simplex_help4}
	If $S$ is a $2$-simplex and $X$ does not contain the edge $(y,z^{r})$, then $x \nsim y^t$.    
\end{lemma}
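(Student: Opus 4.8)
The plan is to argue by contraposition: assume $x \sim y^t$ and deduce that $X$ contains the edge $(y, z^r)$, which is exactly the negation of the hypothesis. This mirrors the endgame reasoning in \cref{lem:2-simplex_help3}, so the argument should run along similar lines. First I would invoke \cref{lem:simplexfall} to assume that $y^{\langle s,t\rangle}$ and $z^{\langle r,t\rangle}$ span simplices, and then collect the adjacencies that come for free from \cref{lemma:2} applied to the corners of the $2$-simplex $S$: in particular $x \sim z^r$ (since $z \sim x^r$ gives, via the simpliciality of the action and the fact that $x^{\langle r,t\rangle}$, resp.\ $z^{\langle r,t\rangle}$, spans a simplex, the edge $x \sim z^r$), $x^t \sim z^{rt}$, and $y \sim y^t$ is \emph{not} an edge (otherwise $y \in X_s \cap X_t \cap X_r$, contradicting \cref{prop:existence}/minimality), and similarly $x \nsim x^t$.

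\textbf{Main step.} With $x \sim y^t$ in hand, I would build a short cycle on which $6$-largeness forces the desired diagonal. Using $x^t \sim z^{rt}$ and $x \sim y^t$, the complex $X$ contains the closed path $(x, y^t, x^t, z^{rt})$ — a $4$-cycle, provided $y^t \ne x^t$, which holds because $x \notin X_t$. Since $x \nsim x^t$, $6$-largeness forces the other diagonal $(y^t, z^{rt})$ to be present. Applying the $t$-action to the edge $(y^t, z^{rt})$ and using that $z^{\langle r,t\rangle}$ spans a $t$-stable simplex, we get $y \sim z^r$, which is precisely what we needed to contradict. (If one worried about the case $y = y^t$, the path degenerates to a triangle and the conclusion $y = y^t \sim z^{rt}$ still yields $y \sim z^r$ after applying $t$; but $y = y^t$ together with $x \sim y^t$ would also be delicate, so it is cleaner to note that $y^t \ne x^t$ is all that is actually used.)

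\textbf{Expected obstacle.} The routine part is the $6$-largeness bookkeeping; the one place needing care is making sure the intermediate edges $x \sim z^r$ and $x^t \sim z^{rt}$ are genuinely available under the standing hypothesis that $S$ is a $2$-simplex, i.e.\ that we have not secretly used $(y,z^r)$ to derive them. These follow from \cref{lemma:2} and \cref{lem:simplexfall} alone (the simplex $z^{\langle r,t\rangle}$ contains $z^r$ and $z^{rt}$, and $z \sim x^r$ transports to $x \sim z^r$ by simpliciality and fullness), so no circularity arises. A secondary subtlety is confirming that $z^{rt} \ne x$: if they coincided, the "$4$-cycle" would collapse, but $z^{rt} = x$ would put $x$ in the $t$-stable simplex $z^{\langle r,t\rangle}$, forcing $x \sim x^t$ or $x = x^t$, again contradicting $x \notin X_t$ and the minimality of $S$. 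Once these degeneracies are excluded, the proof is a two-line application of $6$-largeness and the group action.
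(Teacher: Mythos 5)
Your argument has a genuine gap at the main step. The closed path $(x,y^t,x^t,z^{rt})$ is only a $4$-cycle if the edge $(x,z^{rt})$ is present, and nothing in your setup provides it: from $x\sim z^r$ you get $x^t\sim z^{rt}$, not $x\sim z^{rt}$, and $z^{rt}=(z^r)^t$ cannot be reached by applying \cref{lemma:2} to $x$ and $z^r$ since $x\notin X_t$. Worse, under the standing hypothesis the edge $(x,z^{rt})$ cannot exist at all: by \cref{lem:2-simplex_help3} it would already force $y\sim z^r$. (You could in principle split into cases --- if $x\sim z^{rt}$ invoke \cref{lem:2-simplex_help3} and be done; if not, argue further --- but as written your proof silently assumes the first case and never treats the second, which is the substantive one. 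Your ``secondary subtlety'' only worries about $z^{rt}=x$, not about the adjacency itself.)

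The repair is to close the cycle through $z^r$ rather than directly back to $x$: the paper considers the $5$-cycle $C=(x,y^t,x^t,z^{rt},z^r)$, using $z^{rt}\sim z^r$ (from the simplex spanned by $z^{\langle r,t\rangle}$, via \cref{lem:simplexfall}) and $z^r\sim x$ (\cref{lemma:2}). One then rules out the diagonals $(x,x^t)$ (else $x\in X_t$), $(y^t,z^{rt})$ (this is where the hypothesis $y\nsim z^r$ enters), $(x,z^{rt})$ (else $(x,z^{rt},x^t,y^t)$ is a $4$-cycle with no diagonal), and hence $(x^t,z^r)$ by the $t$-action, leaving at most one diagonal on a $5$-cycle, contradicting $6$-largeness. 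Your preparatory observations ($x\sim z^r$, $x^t\sim z^{rt}$, $x\nsim x^t$, the non-circularity check) are all correct and are exactly the ingredients needed; only the cycle you build from them is the wrong one.
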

\begin{proof}
	Observe that by Lemma~\ref{lem:simplexfall} the orbit $z^{\langle r,t\rangle }$ spans a simplex.  
	Suppose for a contradiction that $x \sim y^t$. By Lemma~\ref{lemma:2} the vertex $x\sim z^{r}$ and thus $x^t \sim z^{rt}$. The complex  $X$ then contains the closed path $C=(x,y^t,x^t,z^{rt},z^{r})$. It is a $4$- or $5$-cycle depending on whether $z^{r} = z^{rt}$ or not. We consider the most difficult case in which it is a $5$-cycle. Clearly it does not contain the diagonal $(x,x^t)$ since otherwise $x \in X_s\cap X_t\cap X_r$. It is $y^t \nsim z^{rt}$ by assumption. Furthermore it does not contain the diagonal $(x,z^{rt})$ because otherwise $(x,z^{rt},x^t,y^t)$ would form a $4$-cycle without diagonals. Using the $\Gamma$-action we conclude that $x^t\nsim z^{r}$. 
	But then $C$ contains at most $1$ diagonal which contradicts $6$-largeness. 
\end{proof}

We are now ready to prove the main result in this section saying that $S$ contains more than just a single $2$-simplex. 

\begin{prop}[$S$ is not a 2-simplex]\label{prop:notasimplex}
	If $\Gamma$ acts {without stabilizing a simplex} on a systolic complex $X$ the surface $S$ is not a $2$-simplex.
\end{prop}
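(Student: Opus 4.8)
The plan is to argue by contradiction: assume $S$ is a single $2$-simplex, so that $x$, $y$, $z$ are the three vertices of one triangle and in particular are pairwise adjacent. The whole point is that being adjacent is far too rigid a relationship between $x \in X_r\cap X_s$, $y\in X_s\cap X_t$, $z\in X_r\cap X_t$: once these three vertices are pairwise adjacent, the commuting pair $r,s$ together with the dihedral bicycle structure from \cref{prop:bicycle} force the whole orbit $x^{\langle r,s\rangle}\cup y^{\langle s,t\rangle}\cup z^{\langle r,t\rangle}$ and its images under $r,s,t$ to collapse into one simplex stabilized by all of $\Gamma$, exactly as in the proof of \cref{lemma:fixpoint}. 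That contradicts the standing assumption that $\Gamma$ does not stabilize a simplex.

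Concretely, I would first invoke \cref{lem:simplexfall} to assume that $y^{\langle s,t\rangle}$ and $z^{\langle r,t\rangle}$ both span simplices; by \cref{lem:commuting uv} the orbit $x^{\langle r,s\rangle}$ spans an $\{r,s\}$-stabilized simplex as well. Then \cref{lem:2-simplex_2_help1} gives that every vertex of $x^{\langle r,s\rangle}$ is adjacent to every vertex of $y^{\langle r,s\rangle}\cup z^{\langle r,s\rangle}$. The next step is to feed the hypotheses of Lemmas~\ref{lem:2-simplex_2_help2}, \ref{lem:2-simplex_help3}, \ref{lem:2-simplex_help4} with the adjacencies now available (e.g.\ $(y^s,z)$, $(x,z^{rt})$) to propagate edges between $y^{\langle s,t\rangle}$, $z^{\langle r,t\rangle}$ and their translates. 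The goal is to show that the union $M = x^{\langle r,s\rangle}\cup x^{\langle r,t\rangle}\cup x^{\langle s,t\rangle}$ (or rather the analogous union built from $x,y,z$) together with its images under $r$, $s$, $t$ spans a single simplex $X'$, and then to run the orbit-closure bookkeeping from the proof of \cref{lemma:fixpoint}: every vertex $a\in X'$ has $a^u\in X'$ for all $u\in\{r,s,t\}$, because $u$ occurs as the second or third letter of a word of length $\le 3$ acting on $x$, $y$ or $z$, and \cref{prop:bicycle} bounds the relevant orbits by $5$ so that $x^{uw}=x^{wu}$ or $x^{uw}=x^{uwu}$. Hence $\Gamma X' = X'$, contradicting the assumption.

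I expect the main obstacle to be the case analysis needed to establish the two "seed" adjacencies that trigger the helper lemmas — namely getting one of $(y^s,z)$ or $(y,z^r)$ and one of $(x,z^{rt})$ or $x\nsim y^t$ to hold, which requires chasing $4$- and $5$-cycles in $X$ using $6$-largeness together with the fact (from minimality of the single-simplex $S$ and \cref{lem:u-stableSplx}) that certain diagonals like $z\nsim z^s$, $x\nsim x^t$, $y^r\nsim y$ are \emph{forbidden}. The delicate point is that these forbidden diagonals are exactly the ones that would say some vertex lies in a triple intersection $X_r\cap X_s\cap X_t$, which \cref{lemma:fixpoint} already excludes under our hypothesis; so every cycle argument must terminate at an allowed diagonal, and one has to check that in each branch enough edges survive to push the argument forward. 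Once those adjacencies are in hand, the collapse of $X'$ to a simplex and the verification of $\Gamma$-invariance is essentially a transcription of the argument in \cref{lemma:fixpoint}, and the contradiction is immediate.
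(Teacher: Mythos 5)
Your opening moves match the paper exactly: argue by contradiction, invoke \cref{lem:simplexfall} to arrange that $y^{\langle s,t\rangle}$ and $z^{\langle r,t\rangle}$ span simplices, use \cref{lem:2-simplex_2_help1} for the adjacencies between $x^{\langle r,s\rangle}$ and $y^{\langle r,s\rangle}\cup z^{\langle r,s\rangle}$, and then fight for the ``seed'' edges $(y,z^s)$ versus $(y,z^r)$ by chasing $4$- and $5$-cycles with the forbidden diagonals coming from minimality. That is precisely the paper's Claims~1 and~2.

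The endgame, however, has a genuine gap. You propose to conclude by assembling the union $M\cup M^r\cup M^s\cup M^t$ (with $M$ built from the orbits of $x$, $y$, $z$) into a single $\Gamma$-stabilized simplex and then contradicting the standing assumption, ``essentially transcribing'' the proof of \cref{lemma:fixpoint}. This cannot work. Under the hypothesis that $\Gamma$ stabilizes no simplex, \cref{lemma:fixpoint} forces $X_r\cap X_s\cap X_t=\emptyset$, so in particular $x\notin X_t$, meaning $x\neq x^t$ and $x\nsim x^t$. Any $t$-invariant simplex containing $x$ would contain $x^t$ as well, and two distinct vertices of a simplex are adjacent; hence \emph{no} simplex containing $x$ (or $y$, or $z$) can be $\Gamma$-invariant, and the set you propose as $X'$ is provably not a simplex. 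The transcription of \cref{lemma:fixpoint} breaks at exactly the point where that proof uses that its base vertex lies in all three invariance sets, so that \cref{prop:bicycle} controls every pairwise orbit; here $x^{\langle s,t\rangle}$ and $x^{\langle r,t\rangle}$ are completely uncontrolled. The paper's actual conclusion is different in kind: after fixing (say) $(y,z^s)\in X$ and $(y,z^r)\notin X$, it propagates edges through the cycles $C_1=(y^s,z^{t},x^t,z^{st},y^{tst})$, $C_2$, $C_3$ using \cref{lem:2-simplex_2_help2,lem:2-simplex_help3,lem:2-simplex_help4}, and terminates in a $4$-cycle with no admissible diagonal, contradicting $6$-largeness directly. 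You would need to replace your final step with an argument of that type.
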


\begin{proof} Recall that in the given situation  $X_r\cap X_s\cap X_t =\emptyset$. 
	We prove the proposition by contradiction arriving at statements that either  contradict $6$-largeness or the fact that $X_r\cap X_s\cap X_t\neq\emptyset$. 	So suppose that $S$ is a $2$-simplex.
	
	By Lemma~\ref{lem:simplexfall} we know that $y^{\langle s,t\rangle }$ and $z^{\langle r,t\rangle }$ span a simplex. First we show that $X$ contains at least one of the two edges $(y,z^{r})$ and $(y,z^s)$. Second, we show that $X$ contains exactly one of these two edges. 
	
	We assume that one of the two edges exists and arrive at a contradiction to $6$-largeness. Hence $S$ is not a single $2$-simplex. 
	
	\emph{Claim 1: $X$ contains either $(y,z^s)$ or $(y, z^r)$.}\\
	\noindent
	Suppose for a contradiction that $X$ does neither contain $(y,z^s)$ nor $(y, z^r)$. Then the $4$-cycle $(x,z^r,z^t,y)$ has the diagonal $(x \sim z^t)$ by $6$-largeness. Then $X$ contains the closed path $(x,y^s,y^{st},x^t,z^t)$.  Lemma~\ref{lemma:2} implies that $x\sim y^s$  and that $y^s\sim y^{st}$ as $y^{\langle s,t\rangle }$ spans a simplex. Using the action we see that $y^{st}\sim x^t$ and $x^t\sim z^t$. By assumption $x\nsim x^t$ and $z \nsim y^s$ and we obtain that $z^t \nsim y^{st}$. Then $x \nsim y^{st}$ as otherwise $(x,z^t,x^t,y^{st})$ would form an $4$-cycle without diagonals.  Hence the remaining two diagonals are contained in $X$ by $6$-largeness. In particular $x^t\sim y^s$. But then $(x,y^s,x^t,z)$ forms a $4$-cycle without diagonals. This is a contradiction.  Hence $X$ contains either $(y,z^s)$ or $(y, z^r)$.
	
	\emph{Claim 2: $X$ contains exactly one of the two edges $(y,z^s)$ and $(y, z^r)$}.\\
	\noindent 
	We have proven already that at least one of both edges is contained in $X$. Hence it remains to prove that both edges are not contained simultaneously. Assume that $X$ contains both edges $(y,z^s)$ and $(y, z^{r})$.    
	The existence of  $(y,z^s)$ and $(y, z^{r})$ implies the existence of the edges $(z^s,y^{rs})$ and $(y^{rs},z^{r})$ as $r$ and $s$ commute. Hence $X$ contains the closed path $(y,z^s,y^{rs},z^{r})$ which is a $4$-cycle. 
	By $6$-largeness it contains a diagonal.
	If $y\sim y^{rs}$ then $y^{s} \sim y^r$ and $X$ contains the  $4$-cycle $(y^{sr},y,y^{s},y^r)$. 	Since $s$ and $r$ commute both diagonals exists which implies that $y \sim y^r$. But this is impossible since then $S$ would not be minimal. Thus the cycle has no  diagonals which contradicts $6$-largeness. Analogously if $z^s\sim z^{r}$  then $X$ contains the 4-cycle $(z^{rs},z,z^{r},z^s)$ without diagonals an we have arrived at  contradiction. 
	
	We are now ready to prove the main assertion.  
	
	By Claim $2$, the complex $X$ contains exactly one of the two edges $(y,z^s)$ and $(y,z^r)$. For symmetrical reasons we may assume that $X$ contains the edge $(y,z^s)$, but not the edge $(y,z^r)$. 
	As $z^{\langle r,t\rangle }$ and $y^{\langle s,t\rangle }$ span a simplex, $y^s = y^{tst}$ or $y^s \sim y^{tst}$. By Lemma~\ref{lem:2-simplex_2_help1}, $X$ contains the edge $(x,z^s)$ and by Lemma~\ref{lem:2-simplex_2_help2}, $X$ contains the edge $(y^s,z^{t})$. Furthermore $z \sim y^t$ by Lemma~\ref{lemma:2}. Thus $X$ contains the closed path $C_1=(y^s,z^{t}, x^t,z^{st},y^{tst})$. A case by case  analysis shows that $C_1$ is a cycle of length $4$ or $5$ depending on whether $y^s = y^{tst}$ or not. First we show that $C_1$ does not contain the diagonal $(y^s,x^t)$. 
	To arrive a contradiction we assume that $C_1$ contains the diagonal $(y^{s},x^t)$. Then  $X$ contains the closed path $C_2=(x,y^{s},x^t,z^{rt},z^{r})$, because of Lemma~\ref{lem:2-simplex_2_help1} and as $z^{\langle r,t\rangle }$ is a simplex. Observe that it is a $5$-cycle. 
	
	We show that $C_2$ contains the diagonal $(y^{s}, z^{r})$. Assume that this is not the case. By assumption $C_2$ does not contain the diagonal $(x, x^t)$. Thus it does not contain  the diagonal $(z^{r},x^t)$ since otherwise $(x,z^{r},x^t,y^{s})$ would be a $4$-cycle without a diagonal. So $C_2$ contains the remaining two diagonals. In particular $x \sim z^{rt}$. This contradicts Lemma~\ref{lem:2-simplex_help3}. Hence $C_2$ contains the diagonal $(y^{s}, z^{r})$. But then $X$ contains the closed path $C_3=(y^{s},z^{s},z^{rs},y^{rs},z^{r})$, as $s$ and $r$ commute and $X$ contains the edge $(z,y^s)$. 
	
	A case analysis shows that the length of $C_3$ is $4$ or $5$ depending on whether $z = z^{r}$ or not. If the length is only $4$, there exists the diagonal $(y^{s},y^{rs})$ or $(z^{r},z^{rs})$ which both leads to a contradiction. Thus the cycle has length $5$. By assumption it does not contain the diagonals $(z^r,z^{rs})$, $(y^{s},y^{rs})$ and $(z^s,y^{rs})$. Thus it contains the remaining diagonals. In particular $y^s \sim z^{rs}$. But then $(y^s,z^r,y^{rs},z^{rs})$ forms a $4$-cycle without a diagonal. This contradicts $6$-largeness. 
	
	We have now shown that $C_1$ does not contain the diagonal $(y^s,x^t)$. Since $z\notin X_s$ the cycle $C_1$ does not contain $(z^t,z^{st})$. Then $y^s \nsim z^{st}$ since otherwise $(y^s,z^{st},x^t,z^t)$ would be a cycle of length $4$ without a diagonal. So $C_1$ has the two  diagonals $(z^t,y^{tst})$ and $(y^{tst},x^t)$. In particular $y\sim z^{st}$. Then $X$ contains the $4$-cycle $(y,z^{st},x^t,z^t)$ by Lemma~\ref{lem:2-simplex_2_help1} and Lemma~\ref{lemma:2}. The diagonal $(z^t,z^{st})$ does not exist by construction and by Lemma~\ref{lem:2-simplex_help4} one has $y\nsim x^t$. But then it is a $4$-cycle without diagonals which contradicts $6$-largeness. 
\end{proof}


\section{Defects at corners of $S$}\label{sec:def-corner}

In this section we study the defects on the corners of the minimal surface $S$. Notation is as in Section~\ref{sec:minimalSurface}. Note that not all the sides of $S$ need to contain inner vertices. There exists however, by \cref{prop:notasimplex} at least one side with at least one inner vertex. 

We assume that $\Gamma$ {acts without stabilizing a simplex} on a systolic complex $X$. \cref{prop:existence} implies then the existence of a nondegenerate minimal surface $S$ satisfying the hypotheses of Section~\ref{sec:minimalSurface}.

\subsection{Defects at any corner}\label{subsec:arbdef-corner}

The statements of this first subsection hold for arbitrary sides and corners of $S$. We use the following notation. 

\begin{notation}\label{notation2}
	Let $a$, $b$ and $c$ denote the three corners of $S$ and $u$, $v$ and $w$ the tree involutions generating $\Gamma$. Here we suppose that 
	\begin{itemize}
		\item $a$ is  the vertex in $X_u \cap X_w$,
		\item $b$ the vertex in $X_u \cap X_v$ and
		\item $c$ the vertex in $X_v \cap X_w$.
	\end{itemize} 
	We denote the geodesic sides of $S$ by $\gamma_u \subset X_u$, $\gamma_v \subset X_v$ and $\gamma_w\subset X_w$.
	So $\{a,b,c\}=\{x,y,z\}$ and $\{u,v,w\}=\{r,s,t\}$, but we do not specify the pairwise orders of the generators. 
	Furthermore denote by
	\begin{itemize}
		\item $a_u$, respectively $b_u$, be the neighbors of $a$, respectively $b$,  on $\gamma_u$, 
		\item $b_v$, respectively $c_v$, be the neighbors of $b$, respectively $c$, on $\gamma_v$ and let 
		\item $c_w$, respectively $a_w$, be the neighbors of $c$, respectively $a$, on $\gamma_w$, 
	\end{itemize}
	in case the respective sides have interior vertices. Note that it is possible that $a_u=b_u$, $b_v=c_v$ and $c_w=a_w$.
	
	Figure~\ref{fig:FlaecheS_konf} summarizes these choices and should serve as a quick reminder for how we named the various vertices.
	
	\begin{figure}[h!]
		\centering
		\includegraphics[width=4cm]{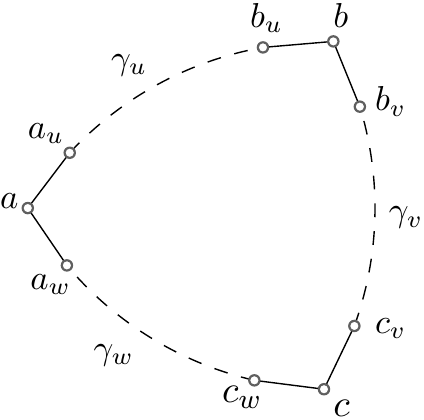}
		\caption{Notation for the surface $S$ in this section. }
		\label{fig:FlaecheS_konf}
	\end{figure}
\end{notation}

We may directly establish some upper bounds on defects. 

\begin{lemma}[Defect bounded by 2]\label{lemma:defects11}
	With notation as in~\ref{notation2} we have: 
	\begin{enumerate}
		\item The defect of any of the corners of $S$ is at most $2$.
		\item Suppose that both $\gamma_u$ and  $\gamma_w$ have at least one inner vertex and that $\defect(a_u)=\defect(a_w)=1$. Then $\defect(a)\leq 1$. 
	\end{enumerate}
\end{lemma}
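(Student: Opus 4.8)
\textbf{Proof strategy for Lemma~\ref{lemma:defects11}.}

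For part (1), I would argue that a corner of a systolic disc cannot have defect larger than $2$, i.e.\ a boundary vertex must be contained in at least one triangle. The plan is to suppose for contradiction that some corner, say $a$, lies in no triangle of $S$. Then the two boundary edges at $a$ are the only simplices of $S$ containing $a$, so $a_u$ and $a_w$ (the boundary neighbours of $a$ along the two incident sides) are not adjacent in $S$. Since $S$ is nondegenerate by \cref{prop:existence} and \cref{prop:notasimplex}, removing $a$ and its two edges and adding the edge $(a_u,a_w)$ --- which exists in $X$ by $6$-largeness applied to a short cycle through $a$, or can be realized by a smaller surface --- would produce a surface spanned by a strictly shorter cycle, contradicting the minimal choice of $C$ in \cref{sec:surfaceConstruction}. (One has to be slightly careful: the relevant shortening uses that $\gamma_u$ and $\gamma_w$ are geodesics in $X_u$ and $X_w$ respectively and that $a_u\in X_u$, $a_w\in X_w$; the edge $(a_u,a_w)$ together with the truncated geodesics gives a closed path of length two less than $C$, and a surface on it exists by \cref{lem:systolic-surface}.) Hence $a$ lies in at least one triangle and $\defect(a)=3-|\{\text{triangles at }a\}|\le 2$.

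For part (2), I would use \cref{lemma:3} applied to both incident sides. Assume $\gamma_u$ and $\gamma_w$ each have an inner vertex and that $\defect(a_u)=\defect(a_w)=1$. Since $a_u$ is an inner vertex of $\gamma_u$ of defect $1$, \cref{lemma:3} gives that the unique interior neighbour $d_u$ of $a_u$ is adjacent to $a_u^{\,u}$; in particular, because $a$ is adjacent to $a_u$ on $\gamma_u$ and $a\in X_u$, the vertex $a$ together with $a_u$ and $d_u$ already forms a triangle of $S$ at $a$, and moreover the defect-$1$ condition forces $d_u$ to be the \emph{only} interior vertex adjacent to $a_u$, so the fan of triangles at $a$ on the $\gamma_u$-side is controlled. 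The same holds on the $\gamma_w$-side via $d_w$. The point is then a counting argument: if $\defect(a)\ge 2$, then $a$ is in at most one triangle of $S$; but the defect-$1$ hypotheses at $a_u$ and $a_w$ force specific triangles $(a,a_u,d_u)$ and $(a,a_w,d_w)$ at $a$, and one checks these must be distinct (otherwise $d_u=d_w$ would be a single interior vertex adjacent to both $a_u$ and $a_w$ and to $a$, which combined with $\defect(a_u)=\defect(a_w)=1$ would collapse $S$ near $a$ into something contradicting nondegeneracy or the geodesic property, e.g.\ forcing $a_u\sim a_w$). Having two distinct triangles at $a$ gives $\defect(a)\le 1$.

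The main obstacle I anticipate is the careful bookkeeping in part (2): one must rule out the degenerate configuration where the corner $a$, its two side-neighbours $a_u,a_w$, and their interior neighbours $d_u,d_w$ overlap or coincide, and translate ``$\defect(a_u)=1$'' precisely into ``$a_u$ has exactly one interior neighbour and exactly two triangles of $S$ contain it.'' A clean way to organize this is to note that by \cref{lemma:2} we have $a\sim a_u^{\,u}$ and $a\sim a_w^{\,w}$, and that the $6$-largeness forced diagonals from \cref{lemma:3} pin down the local triangulation at $a$; then a short case distinction on whether $d_u=d_w$ finishes it. I would keep the argument combinatorial and local, invoking \cref{prop:GaussBonnet} only if needed to exclude stray interior vertices of negative defect crowding the corner.
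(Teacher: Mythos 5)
Part (1) of your proposal reaches the right conclusion but is more elaborate than necessary: in a nondegenerate triangulated $2$-disc every boundary vertex lies in at least one triangle (each of its two boundary edges does), so $\defect\le 2$ is essentially automatic, and the paper simply appeals to the minimality of the circumference. Your detour through $6$-largeness to manufacture the edge $(a_u,a_w)$ is not justified as written (no short cycle through $a$, $a_u$, $a_w$ is given a priori), but since the hypothesis you are contradicting cannot occur in a simplicial disc, this does no real harm.

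Part (2) has a genuine gap in the case analysis. You produce, from $\defect(a_u)=1$, a triangle $(a,a_u,d_u)$ of $S$ at $a$, and likewise $(a,a_w,d_w)$, and claim these are distinct unless $d_u=d_w$. That is the wrong split: if $d_u=d_w=d$ with $d\notin\{a_u,a_w\}$, the two triangles are still distinct (they differ in $a_u$ versus $a_w$) and you are done. The two triangles coincide precisely when $d_u=a_w$ and $d_w=a_u$, i.e.\ when the unique triangle at $a$ is $(a,a_u,a_w)$ and $a$ genuinely has defect $2$. This is exactly the configuration that must be excluded, and you dismiss it with ``would collapse $S$ near $a$ \dots\ e.g.\ forcing $a_u\sim a_w$'' --- but $a_u\sim a_w$ is not in itself a contradiction: $a_u\in X_u$ and $a_w\in X_w$ lie on different sides, so their adjacency violates neither the geodesic property of either side nor, by itself, minimality of the perimeter. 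The paper's proof is devoted entirely to this case: with $\defect(a)=2$ one has $a_u\sim a_w$, and the defect-$1$ conditions at $a_u$ and $a_w$ force the second triangle on the interior edge $(a_u,a_w)$ to have its apex $d$ equal to the next vertex on $\gamma_u$ \emph{and} to the next vertex on $\gamma_w$; hence $d\in X_u\cap X_w$, and replacing the corner $a$ by $d$ shortens both geodesics, contradicting the minimal choice of the bounding cycle. You need to supply this (or an equivalent) argument. Note also that your appeal to \cref{lemma:3} tacitly assumes the third neighbour of $a_u$ is an \emph{interior} vertex of $S$, which fails exactly in the critical case, where $d_u=a_w$ is a boundary vertex.
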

\begin{proof}
	The first item follows from the minimality of the circumference of $S$.  
	To see the second item suppose that $a$ has defect two. Then $a_u$ and $a_w$ are connected by an edge. In addition the defect of both $a_u$ and $a_w$ equals one, hence they are connected to a common vertex $d \ne a$. We may conclude that then the vertex $d$ is in $X_u \cap X_w$ and may then replace $a$ by $d$ and shorten $\gamma_u$ and $\gamma_w$ contradicting the minimality of $S$. 
\end{proof}

The next two lemmata may seem a bit random. However, these situations will arise naturally in later proofs. 

\begin{lemma}[Defect bounded by 1] \label{lemma:help}
	Suppose $a^{\langle u,w\rangle}$ spans a simplex and $\gamma_u$ has at least one inner vertex. If in addition one of the following two conditions holds, then $\defect(a)\leq 1$.  
	
	\begin{enumerate}
		\item $\gamma_w$ has at least one inner vertex, $a_u^w\sim a_w$ and $a^u \nsim a_w$.  	
		\item $\gamma_w$ has no inner vertices, $a_u^w\sim c$ and $a^u \nsim c$.  
	\end{enumerate}
\end{lemma}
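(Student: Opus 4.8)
The plan is to argue by contradiction, assuming $\defect(a)\geq 2$, and then to combine this with the hypothesis that $\gamma_u$ has an inner vertex to produce a vertex in the intersection of two invariance sets that allows us to shorten $S$, contradicting its minimality. By \cref{lemma:defects11}(1) the defect at the corner $a$ is at most $2$, so the assumption forces $\defect(a)=2$; in particular the two neighbors of $a$ on the boundary of $S$ — namely $a_u$ on $\gamma_u$ and the relevant neighbor on $\gamma_w$ (either $a_w$ in case~(1), or the corner $c$ in case~(2)) — are joined by an edge, and $a$ together with these two neighbors spans a $2$-simplex which is the unique triangle of $S$ at $a$.

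First I would treat case~(1). Here $\gamma_w$ has an inner vertex $a_w$, we know $a_u^w\sim a_w$ and $a^u\nsim a_w$, and from $\defect(a)=2$ we have $a_u\sim a_w$. I would now use \cref{lemma:2} applied to the edge $a\sim a_u$ in $X_u$ (so $a^u\sim a_u$, indeed $a,a_u,a^u,a_u^u$ span a $u$-invariant simplex) together with the fact that $a^{\langle u,w\rangle}$ spans a simplex (so in particular $a^w\sim a$ and $a^w\sim a^u$, and $a,a^u,a^w,a^{uw}$ span an $\langle u,w\rangle$-invariant simplex). The goal is to locate a vertex $d\in X_u\cap X_w$ adjacent to $a_u$ and to $a_w$: then $d\neq a$ because $a\nsim$ one of them is excluded, but more to the point $d$ lets us reroute $\gamma_u$ through $a_u,d$ and $\gamma_w$ through $a_w,d$, bypassing $a$ and strictly decreasing the area of $S$, a contradiction. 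To find $d$: the vertices $a_u$, $a_w$, $a^u$ sit in a configuration where $a_u\sim a_w$, $a_u^w\sim a_w$, and $a_u\sim a^u$ (by \cref{lemma:2}); one then examines the $4$-cycle or short cycle determined by $a_u, a_w, a_u^w$ and the image vertices under $w$, using $6$-largeness and the simpliciality of the $w$-action, to produce a $w$-invariant simplex containing $a_u$ and $a_w$ as in \cref{lem:dist_2_simplex} or \cref{lem:u-stableSplx}; the hypothesis $a^u\nsim a_w$ is exactly what is needed to rule out the degenerate alternative in which no new vertex appears. The vertex $d$ produced this way lies in $X_w$ by $w$-invariance and in $X_u$ by the symmetric argument (or because it is forced to be adjacent to $a_u$ and $a_u^u$).

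Case~(2) is the analogue where $\gamma_w$ has no inner vertices, so the role of $a_w$ is played by the corner $c$ itself: the hypotheses become $a_u^w\sim c$ and $a^u\nsim c$, and $\defect(a)=2$ gives $a_u\sim c$. The same reasoning produces a vertex $d\in X_u\cap X_w$ adjacent to $a_u$ and to $c$; replacing the corner $a$ by $d$ and rerouting $\gamma_u$ through $d$ while collapsing the (empty) side $\gamma_w$ to the edge $c\sim d$ again strictly decreases the area (or the circumference) of $S$, the desired contradiction.

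The main obstacle is the second paragraph: carefully chasing the short cycles around $a_u,a_w,a_u^w$ and their $w$-images to extract the invariant mid-simplex, and making sure the extracted vertex $d$ is genuinely distinct from $a$ and genuinely adjacent to both $a_u$ and $a_w$ (resp.\ $c$), so that the rerouting is legitimate. The conditions $a^u\nsim a_w$ (resp.\ $a^u\nsim c$) are there precisely to block the collapsed case, and verifying that they do their job — i.e.\ that without them one could have $d=a$ or no new vertex at all — is where the argument needs the most care. Once $d$ is in hand, the minimality contradiction is immediate from the construction of $S$ in \cref{sec:surfaceConstruction}.
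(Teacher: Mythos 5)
Your proposal does not follow the paper's argument, and as it stands it has two genuine gaps. The more serious one is the final step: even if you had a vertex $d\in X_u\cap X_w$ with $d\neq a$, $d\sim a_u$ and $d\sim a_w$ (resp.\ $d\sim c$), no contradiction with the minimality of $S$ follows. The replacement paths $(d,a_u,\dots,b)$ and $(d,a_w,\dots,c)$ have exactly the same lengths as $\gamma_u$ and $\gamma_w$, so the circumference of the bounding cycle does not drop; and since $\defect(a)=2$ gives $a_u\sim a_w$, the new cycle is spanned by the disc obtained from $S$ by exchanging the single triangle $(a,a_u,a_w)$ for $(d,a_u,a_w)$, so the area does not drop either. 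At best you obtain \emph{another} surface with the same minimality properties — which is exactly how the paper treats such corner replacements (see \cref{lemma:help2}), not a contradiction. A genuine shortening would require $d$ to be adjacent to the \emph{second} vertices of both geodesics, as in part (2) of \cref{lemma:defects11}, and nothing in the hypotheses of the present lemma supplies that. (Your stated reason that "$d\neq a$" is also not coherent: $a$ is adjacent to both $a_u$ and $a_w$.) The second gap is that the construction of $d$ is never carried out: "examine the $4$-cycle or short cycle determined by $a_u,a_w,a_u^w$ and the image vertices under $w$ \dots to produce a $w$-invariant simplex" is a statement of intent rather than an argument, and it is not clear that such a $d$ exists at all under the given hypotheses; you yourself flag this as the main obstacle.

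The paper's proof avoids both issues by never producing a new vertex. Assuming $\defect(a)=2$ (so $a_u\sim a_w$, with $a_w=c$ in case (2)), the hypothesis that $a^{\langle u,w\rangle}$ spans a simplex, the hypothesis $a_u^w\sim a_w$, and \cref{lem:u-stableSplx} yield the closed path $(a^u,a^{uw},a_u^w,a_w,a_u)$. The hypothesis $a^u\nsim a_w$ and the non-adjacency $a_u\nsim a_u^w$ (which would otherwise put $a_u$ in $X_u\cap X_w$ and contradict minimality) kill enough diagonals that $6$-largeness forces $a_u\sim a^{uw}$, and this in turn produces a $4$-cycle both of whose diagonals are excluded — a direct contradiction with $6$-largeness. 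Note in particular that $a^u\nsim a_w$ is used to exclude a specific diagonal of that $5$-cycle, not, as you suggest, to "rule out the degenerate alternative in which no new vertex appears." If you want to salvage your outline, you would need both an actual construction of $d$ and a mechanism by which its existence strictly decreases circumference or area; neither is present.
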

\begin{proof}
	Observe that in the second case, where $\gamma_w$ does not contain inner vertices, one has $a_w=c$. 
	We handle both cases simultaneously. Suppose for a contradiction that $a$ has defect $2$.
	The orbit $a^{\langle u,w\rangle}$ spans a simplex, by assumption $a_u^w \sim a_w$, and by \cref{lemma:2} we have that $(a^u,a^{uw}, a_u^w, a_w, a_u)$ forms a cycle. 
	Note that this cycle has at least length $4$ as $a_u^w \ne a_w \ne a_u$ by assumption and $a_u \ne a^u$ since otherwise $a^u=a_u\sim a_w$. 
	Minimality of $S$ implies that $a^u\nsim a_w$ and $a_u\nsim a_u^w$. Hence $a_u \sim a^{uw}$ by $6$-largeness. But then $(a_u,a^{uw},a_u^w,a_w^w)$ forms a 4-cycle, so either $a_u\sim a_u^w$ or $a^{uw} \sim a_w^w$ which both lead to contradictions. 
\end{proof} 

\begin{lemma}[Spanned simplex]\label{lemma:help2}
	Suppose we have $a_u \sim a_w^w$ in $S$ and that one of the following two conditions holds: 
	\begin{enumerate}
		\item $\defect(a)=2$,  $\defect(a_u)=1$ and $\defect(a_w)=0$ and both $\gamma_u$ and $\gamma_w$ have at least one inner vertex.
		\item $S$ consists of two $2$-simplices, the only inner vertex on $\gamma_u$ is $a_u$ and $X$ contains the edge $c^w \sim b$. 
	\end{enumerate}
	Assume further that $S$ contains the edge $(a_w,a_u^w)$, where $c = a_w$ in the second case.
	Then $S$ can be chosen so that $a^{\langle u,w\rangle}$ spans a simplex.
\end{lemma}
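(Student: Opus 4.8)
\emph{Plan.} If $a^{\langle u,w\rangle}$ already spans a simplex there is nothing to prove, so assume it does not. Since $\langle u,w\rangle$ is one of the dihedral subgroups of $\Gamma$ it has order $2n$ with $n\le 5$, so the bicycle property \cref{prop:bicycle} applies: the orbit $a^{\langle u,w\rangle}$ forms a diagonal-free cycle $C_0$ of length $2n$, and there is a vertex $\bar a\in X_u\cap X_w$ with $\bar a^{\langle u,w\rangle}$ spanning a simplex $\Sigma$ all of whose vertices lie in $X_u\cap X_w$, such that $C_0$ and $\Sigma$ span a complete bipartite graph. In particular $\bar a\sim a$, $\bar a\sim a^u$, $\bar a\sim a^w$, and $a$ is adjacent to every vertex of $\Sigma$. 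The plan is to replace the corner $a$ of $S$ by $\bar a$ through a surgery that is localized at that corner: this will produce a new surface $S'$ that agrees with $S$ away from the corner, has the same area and the same perimeter, still satisfies all the requirements of \cref{sec:surfaceConstruction}, and has $\bar a^{\langle u,w\rangle}=\Sigma$ spanning a simplex; renaming then gives the claim.

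\emph{The surgery, granting two adjacencies.} The surgery goes through provided $\bar a$ is adjacent to the neighbour of $a$ along each of the two sides meeting at that corner, that is, $\bar a\sim a_u$ and $\bar a\sim a_w$ in case~(1), and $\bar a\sim a_u$ and $\bar a\sim c$ in case~(2) (recall $a_w=c$ there). In case~(1), $\defect(a)=2$ forces $\{a,a_u,a_w\}$ to be the unique $2$-simplex of $S$ through $a$, in particular $a_u\sim a_w$; since $X$ is flag, $\{\bar a,a_u,a_w\}$ is a $2$-simplex too, and I would replace $\{a,a_u,a_w\}$ by it, rerouting $\gamma_u$ and $\gamma_w$ through $\bar a$. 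In case~(2), $\gamma_u$ being a geodesic gives $a\nsim b$, so $S$ is the union of $\{a,a_u,c\}$ and $\{a_u,b,c\}$ along the edge $(a_u,c)$, and the analogous swap of $\{a,a_u,c\}$ for $\{\bar a,a_u,c\}$ works. In either case $S'$ is again a triangulated disc of the same area; its boundary is obtained from that of $S$ by replacing the endpoint $a$ of $\gamma_u$ and $\gamma_w$ by $\bar a$, hence has the same length; and the two rerouted sides are still geodesics in $X_u$, resp.\ $X_w$, since by \cref{prop:1} a strictly shorter path from $\bar a$ to $b$ (or to $c$) inside $X_u$ (resp.\ $X_w$) would, together with the remaining sides, yield a cycle shorter than the minimal one fixed in \cref{sec:surfaceConstruction}, using $\bar a\in X_u\cap X_w$. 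The corner memberships are preserved, so renaming $\bar a$ as $a$ and $S'$ as $S$ finishes the argument.

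\emph{Establishing the adjacencies — the hard part.} What remains, and is the technical heart, is to prove $\bar a\sim a_u$ and $\bar a\sim a_w$ (resp.\ $\bar a\sim c$). The inputs are: the two hypothesized edges $a_u\sim a_w^w$ and $(a_w,a_u^w)$, which are equivalent by simpliciality of the $w$-action; \cref{lemma:2}, giving $a^w\sim a_w$, $a\sim a_w^w$, $a^u\sim a_u$, $a\sim a_u^u$, and, applying $w$ to $a\sim a_w$, also $a^w\sim a_w^w$; the bipartite-plus-simplex structure of $C_0\cup\Sigma$ from \cref{prop:bicycle}; and, in case~(2), the explicit two-triangle shape of $S$ together with the edge $c^w\sim b$. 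The obstacle I expect to fight is that $a\sim a^u$ and $a\sim a^w$ are edges of the full-length cycle $C_0$ (as $a\in X_u\cap X_w$), so any short cycle through $a$ already carries a diagonal and $6$-largeness yields nothing new; the way around it is to route $4$- and $5$-cycles instead through the auxiliary vertex $a_w^w$ (adjacent to $a$, $a^w$ and $a_u$) and through the vertices of $\Sigma$, and to use crucially that $C_0$ has \emph{no} diagonals in order to exclude the unwanted edges. Pushing these cycles through $6$-largeness — in the style of the proof of \cref{lem:simplexfall}, and invoking the defect data of case~(1), resp.\ the explicit description of $S$ in case~(2) — should force the two missing edges, after which the surgery above completes the proof.
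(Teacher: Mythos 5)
Your overall plan coincides with the paper's: invoke \cref{prop:bicycle} to produce a vertex $f=\bar a\in X_u\cap X_w$ whose $\langle u,w\rangle$-orbit spans a simplex, and then modify $S$ at the corner $a$. But the proof has a genuine gap exactly where you flag ``the hard part'': you never actually derive the adjacencies $\bar a\sim a_u$ and $\bar a\sim a_w$, you only assert that pushing $4$- and $5$-cycles through $6$-largeness ``should force the two missing edges.'' Worse, that target is not in general achievable, so no amount of cycle-chasing will close it. The paper's case analysis (via the cycle $(f,a^u,a_u,a_w,a^w)$, then $(f,a^u,a_u,a^w)$, then $(f^w,a_u^w,a_w,a_u,f)$, then $(a_u^w,f,a_u,a_w^w)$) terminates in a case where one can only conclude $f\sim a_u$ and $f\sim a_w^w$ --- \emph{not} $f\sim a_w$. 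In that case the single-corner surgery you describe is unavailable, and the correct move is a two-vertex replacement: exchange $a$ with $f$ \emph{and} $a_w$ with $a_w^w$ (respectively $c$ with $c^w$ in case~(2)). This is precisely where the lemma's extra hypotheses are consumed: the edge $c^w\sim b$ guarantees the new boundary is legitimate in case~(2), and in case~(1) the defect data ($\defect(a)=2$, $\defect(a_u)=1$, $\defect(a_w)=0$) yield $a_u'\sim a_w'$ and hence, via the $4$-cycle $(a_u,a_w^w,a_w',a_u')$, a diagonal that lets the modified disc close up. You list these hypotheses as ``inputs'' but never deploy them, which is the symptom of the missing branch.

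A secondary, smaller issue: even in the branch where the simple corner swap does work, your verification that the rerouted sides remain geodesics and that the total perimeter is unchanged is fine in spirit, but in case~(1) the unique triangle of $S$ at $a$ is $\{a,a_u,a_w\}$ only because $\defect(a)=2$; in the two-vertex-replacement branch the combinatorics of the new disc are different and need the separate argument above. So the proposal is not wrong in its first branch, but it is incomplete: it silently assumes the favorable outcome of the diagonal analysis and omits the fallback surgery that the lemma's hypotheses exist to support.
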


Note that the second case of \cref{lemma:help2} implies that $a_w = c$. 

\begin{proof}
	We handle both cases simultaneously. The surface $S$ as in item $1$ of the lemma is illustrated in Figure~\ref{fig:def10_b}\subref{help2b_SubfigA}. 
	Suppose that $a^{\langle u,w \rangle}$ does not span a simplex. In particular this implies that $\vert a^{\langle u,w \rangle} \vert \neq 4$ by $6$-largeness. Then Proposition~\ref{prop:H} implies that the orbit $a^H$ forms a cycle without diagonals and that there is a common neighbor $f \in X_u \cap X_w$ of $a$, $a^w$ and $a^u$ such that $f^{\langle u , w \rangle}$ is a simplex. Then $X$ contains the cycle $(f,a^u,a_u,a_w,a^w)$ by Lemma~\ref{lemma:2} which has diagonals. Lemma~\ref{lemma:4} implies that $a_w$ is not adjacent to $a^u$ and since the orbit $a^H$ forms a cycle without diagonals $a^u \ne a^w$. 
	So the cycle contains two of the three diagonals $(a_u,a^w)$, $(f,a_u)$ and $(f, a_w)$. If it contains the two diagonals $(f,a_u)$ and $(f, a_w)$, we obtain a new surface $S'$ with the same minimality properties as $S$ by replacing $a$ with $f$. Continue with this surface in place of $S$. 
	
	In the two remaining cases $a_u \sim a^w$. Then $X$ contains the cycle $(f,a^u,a_u,a^w)$ as illustrated in Figure~\ref{fig:def10_b}\subref{help2b_SubfigB}.  This cycle has to contain a diagonal and since the orbit $a^H$ forms a cycle without diagonals, $X$ contains the diagonal $(f,a_u)$. In particular $f^w \sim a_u^w$. By assumption $a_w\sim a_u^w$. Thus  $X$ contains the cycle $(f^w,a_u^w,a_w,a_u,f)$ as illustrated in Figure~\ref{fig:def10_b}\subref{help2b_SubfigC}. This cycle has to have diagonals and $a_u^w \nsim a_u$ by the minimality of $S$.  Hence either $a_w \sim f^w$, $a_u \sim f^w (\Leftrightarrow f \sim a_u^w)$ or $a_w \sim f$. If $a_w \sim f$, $X$ contains the two diagonals $(f,a_u)$ and $(f, a_w)$ and we obtain a new surface $S'$ with the same minimality properties as $S$ by replacing $a$ with $f$ like above.

	\subfiglabelskip=0pt
	\begin{figure}
		\centering
		\subfigure[][]{	\label{help2b_SubfigA}	
			\includegraphics{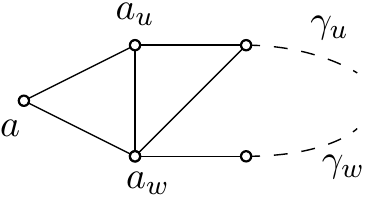}	}
		\subfigure[][]{\label{help2b_SubfigB}
			\includegraphics{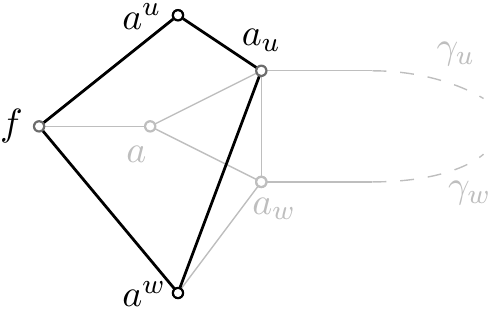}}	
		\subfigure[][]{\label{help2b_SubfigC}
			\includegraphics{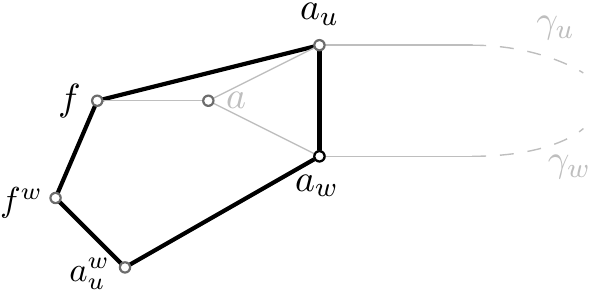}        }
		\caption{ A situation in the proof of Lemma~\ref{lemma:help2}. 
		}
		\label{fig:def10_b}
	\end{figure}

	
	The remaining case is that $a_u^w \sim f$. By assumption,  $a_w\sim a_u^w$ which implies that $a_w^w\sim a_u$ and $X$ contains the cycle  $(a_u^w,f,a_u,a_w^w)$ as illustrated in Figure~\ref{fig:def10_c}\subref{help2_SubfigA}. This cycle has to contain a diagonal. Minimality of the surface implies that $a_u \nsim a_u^w$ and that  $X$ contains the diagonal $(f, a_w^w)$.  In the second case $a_w=c$ and by assumption $c^w\sim b$. We then obtain a surface $S'$ with the same minimality properties as $S$ by exchanging $a$ with $f$ and $c$ with $c^w$. Otherwise $S$ satisfies the properties of the first item and $\gamma_u$ and $\gamma_w$ contain at least one inner vertex. Let $a_u'$ and $a_w'$ be the neighbors of $a_u$ and $a_w$ different from $a$ on the boundary of $S$ . Using the assumption on the defects of $a,a_u$ and $a_w$ we obtain that $a_u'\sim a_w'$. Moreover $a_w^w \sim a_w'$ by Lemma~\ref{lemma:2}. Hence $X$ contains the cycle $(a_u,a_w^w,a_w',a_u')$ as illustrated in Figure~\ref{fig:def10_c}\subref{help2_SubfigB}. Then  $6$-largeness implies that one of the diagonals $(a_u,a_w')$ and $(a_w^w,a_u')$ exists. In both cases we obtain a surface $S'$ with the same minimality properties as $S$ by exchanging $a$ with $f$ and $a_w$ with $a_w^w$ as illustrated in Figures~\ref{fig:def10_c}\subref{help2_SubfigC} and \ref{fig:def10_c}\subref{help2_SubfigD}.
\end{proof}

\subfiglabelskip=0pt
\begin{figure}
	
	\centering
	\subfigure[][]{	\label{help2_SubfigA}	
		\includegraphics{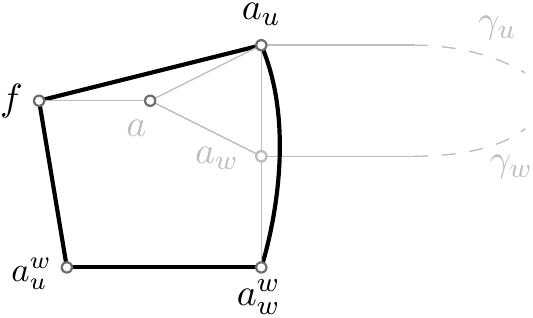}	}
	\subfigure[][]{\label{help2_SubfigB}
		\includegraphics{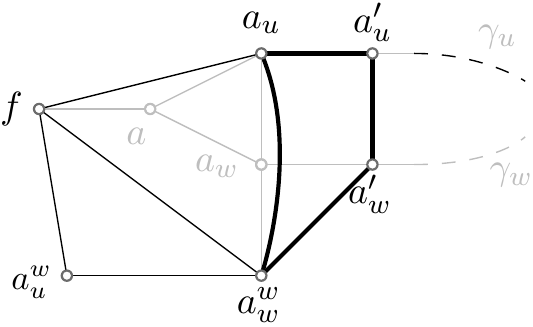}}	
	\subfigure[][]{\label{help2_SubfigC}
		\includegraphics{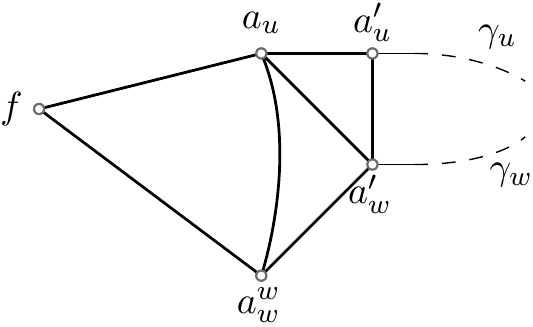}        }
	\subfigure[][]{\label{help2_SubfigD}
		\includegraphics{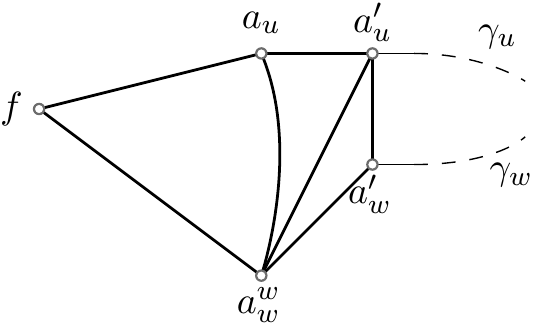}
	}	
	\caption{This figure illustrates the proof of Lemma~\ref{lemma:help2}. 
	}
	\label{fig:def10_c}
\end{figure}

\begin{lemma}[Bounding defects at a corner]\label{lemma:defectes10}
	Assume that both $\gamma_u$ and $\gamma_w$ have at least one inner vertex. If $a_u$ has defect $1$ and $a_w$ has defect $0$, then $a$ has defect  at most $1$.
\end{lemma}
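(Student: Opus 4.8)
The plan is to argue by contradiction, so suppose $a$ has defect $2$. By \cref{lemma:defects11}(1) this is the maximal possible value, so $a_u\sim a_w$ and $a$ together with its two boundary neighbours span a single $2$-simplex at the corner. Since $\defect(a_u)=1$, the vertex $a_u$ has a unique interior neighbour, and since $\defect(a_w)=0$, the vertex $a_w$ has exactly $3$ triangles, i.e.\ one interior neighbour besides $a$ and $a_u$; the goal will be to exploit these local pictures together with \cref{lemma:2} (applied to $\gamma_u$ and to $\gamma_w$) and the non-adjacency statement of \cref{lemma:4} to produce either a shorter cycle $C$ (contradicting minimality of the length of the sides) or a smaller-area surface on the same boundary (contradicting minimality of $S$).

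The key steps, in order, would be: (i) Record the local combinatorial data at the corner: $a\sim a_u$, $a\sim a_w$, $a_u\sim a_w$, and by \cref{lemma:2} (since $a\in X_u$ and $a_u$ lies on $\gamma_u\subset X_u$, and similarly on $\gamma_w$) the edges $a^u\sim a_u$ and $a^w\sim a_w$, as well as $a_u\sim a_u^w$ type relations where applicable. (ii) Since $\defect(a_w)=0$, the triangle $(a,a_u,a_w)$ forces $a_u$ into the link of $a_w$ in a specific position, so the interior neighbour $a_w'$ of $a_w$ (other than $a,a_u$) is adjacent to $a_u$; likewise the unique interior neighbour of $a_u$ must then be $a_w'$. (iii) Now try to apply \cref{lemma:help2} with this corner configuration: we have $a_u\sim a_w^w$ and the hypothesis $\defect(a)=2,\ \defect(a_u)=1,\ \defect(a_w)=0$ is exactly item (1) there, provided we can verify the edge $(a_w,a_u^w)$; if that edge is present we conclude $a^{\langle u,w\rangle}$ spans a simplex and then \cref{lemma:help}(1) (using $a_u^w\sim a_w$ and the $\cref{lemma:4}$-type non-adjacency $a^u\nsim a_w$) yields $\defect(a)\le 1$, the desired contradiction. (iv) If instead the edge $(a_w,a_u^w)$ is \emph{absent}, use $6$-largeness on the $4$-cycle $(a_u, a_w, a_w^w, a_u^w)$ (which exists by \cref{lemma:2}) to force $a_u\sim a_w^w$ — already known — together with the other diagonal $a_w\sim a_u^w$, a contradiction; or, if the relevant cycle has a longer length because $a_w=a_w^w$ or $a_u=a_u^w$ (i.e.\ these vertices lie in $X_w$, resp.\ $X_u$), handle these degenerate sub-cases directly: $a_w=a_w^w$ would put $a_w\in X_u\cap X_w$ at distance less than that of $a$ after shortening $\gamma_w$, contradicting minimality.

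I expect the main obstacle to be step (iv), the bookkeeping of degenerate cases where some of the auxiliary vertices $a^u, a^w, a_u^w, a_w^w$ coincide with vertices already in play (because the relevant corner or edge is actually fixed by $u$ or $w$). In each such coincidence one has to check that the collapsed configuration still either shortens a side or reduces area, and that no case slips through; this is where \cref{lemma:4} (which guarantees $b\ne b^u$ for an edge at an inner defect-$1$ vertex, hence $a_u\ne a_u^w$) is doing the real work, ruling out the worst collapses. Once all coincidences are excluded, the argument is just a finite check of which pair of diagonals a $4$- or $5$-cycle can carry, each branch terminating in a contradiction with minimality of $S$ or with $6$-largeness.
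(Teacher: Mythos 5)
Your overall skeleton is the right one and matches the paper's: assume $\defect(a)=2$, reduce via \cref{lemma:help2}(1) to the case that $a^{\langle u,w\rangle}$ spans a simplex, and then combine the \cref{lemma:4}-nonadjacency $a^u\nsim a_w$ with \cref{lemma:help}(1) to force $\defect(a)\le 1$. But there is a genuine gap at the point where everything hinges: establishing the edge $a_u\sim a_w^w$ (equivalently $a_w\sim a_u^w$, by applying $w$), which is a hypothesis of \cref{lemma:help2} and which you only assert. Your fallback in step (iv) does not work: $(a_u,a_w,a_w^w,a_u^w)$ is not a $4$-cycle, because the edge $a_u\sim a_u^w$ is not available from \cref{lemma:2} — it would mean $a_u\in X_u\cap X_w$, which is exactly what minimality of $S$ forbids (one could shorten the sides by moving the corner to $a_u$). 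So you cannot force the missing diagonal from that configuration, and since ``$(a_w,a_u^w)$ absent'' is the same case as ``$(a_u,a_w^w)$ absent'', your case (iv) is the whole difficulty, not a degenerate leftover.

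The paper closes this gap with a different cycle, and your slightly garbled local picture in step (ii) is what hides it. Since $\defect(a)=2$ the unique triangle at $a$ is $(a,a_u,a_w)$; since $\defect(a_u)=1$ its second triangle is $(a_u,a_w,a_u')$ (so the unique ``third'' neighbour of $a_u$ is $a_w$ itself, not some interior vertex); and since $\defect(a_w)=0$ its link in $S$ is the path $a,a_u,a_u',a_w'$, forcing $a_u'\sim a_w'$ — in particular $a_w$ has no interior neighbours at all. With these forced edges and \cref{lemma:2} (which gives $a\sim a_w^w$ and $a_w^w\sim a_w'$ along $\gamma_w$), the closed path $(a,a_w^w,a_w',a_u',a_u)$ is a $5$-cycle; $a\nsim a_u'$ and $a\nsim a_w'$ because $\gamma_u,\gamma_w$ are geodesics, so if $a_u\nsim a_w^w$ the only admissible diagonals are $(a_u,a_w')$ and $(a_w^w,a_u')$, and either one leaves a $4$-cycle with no diagonal, contradicting $6$-largeness. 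That yields $a_u\sim a_w^w$, after which your steps (iii) and the conclusion go through as in the paper. Without some argument of this kind your proof does not close.
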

\begin{proof}
	We prove the claim by contradiction. Assume that $a$ has defect $2$. Let $a_u'$ and $a_w'$ be the neighbors of $a_u$ and $a_w$ different from $a$ on $\gamma_u$ and $\gamma_w$ respectively.
	
	If $a_u$ is not adjacent to $a_w^w$, then the sequence $(a, a_w^w,a_w',a_u',a_u)$ forms a $5$-cycle by Lemma~\ref{lemma:2}. As $a$ is not adjacent to either $a_u'$ or $a_w'$ for minimality reasons, this $5$-cycle contains at least one of the two diagonals $(a_u,a_w')$ and $(a_w^w,a_u')$. In either case there occurs a $4$-cycle without a diagonal which is a contradiction. 
	So $a_u$ is adjacent to $a_w^w$. 
	
	By Lemma~\ref{lemma:help2} we may assume that $a^{\langle u,w \rangle}$ spans a simplex. 
	Then Lemma~\ref{lemma:4} implies that $a_u$ is not adjacent to $a_u^w$. But then $a$ has defect at most $1$ by Lemma~\ref{lemma:help} and we have reached a contradiction.
\end{proof}


\subsection{Defects at the corner of $S$ whose involutions commute   }\label{subsec:commdef-corner}

This subsection concerns the corner of $S$ whose involutions commute. Here we use the notation as in Section~\ref{sec:minimalSurface}. 

Note that the considered triangle groups contain two involutions which commute and recall from Notation~\ref{notation:triangle-groups} that we denoted them by $r$ and $s$. They correspond to two sides of $S$ which we denote by $\gamma_s$ and $\gamma_r$ with common corner $x$. Let $x_s$ be the neighbor of $x$ on $\gamma_s$ and $x_r$ be the neighbor of $x_r$ on $\gamma_r$. Notice that $x^r\sim x^s$ by 6-largeness.  The next three lemmas consider configurations at this special corner.

\begin{lemma}[Defects provided inner vertices]\label{lemma:commute1}
	With $r$ and $s$ the commuting involutions assume that both $\gamma_r$ and $\gamma_s$ have at least one inner vertex. If $\defect(x_r)=\defect(x_s)=1$ then $\defect(x)\leq 0$. 
\end{lemma}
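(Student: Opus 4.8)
The goal is to upgrade the conclusion of \cref{lemma:defectes10} (or its symmetric version) at the special corner $x$ where the two commuting involutions $r$ and $s$ meet: here both neighbouring defects equal $1$ rather than being split $1$/$0$. The plan is to argue by contradiction, so suppose $\defect(x)\ge 1$. By \cref{lemma:defects11}(2) we already know $\defect(x)\le 1$, so the only remaining possibility is $\defect(x)=1$. Then $x$ has a \emph{unique} interior neighbour, call it $d$, and because $\defect(x_r)=\defect(x_s)=1$, \cref{lemma:3} gives $d\sim x^r$ and $d\sim x^s$; also $d$ is adjacent to $x_r$ and to $x_s$. So $d$ is simultaneously adjacent to $x$, $x_r$, $x_s$, $x^r$, $x^s$.

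Next I would exploit the commuting structure at this corner. Since $r$ and $s$ commute, \cref{lemma:2} (applied to the edge $x\sim x_r$ in $X_r$ and to $x\sim x_s$ in $X_s$) together with \cref{lem:commuting uv} shows $x^{\langle r,s\rangle}$ spans an $\langle r,s\rangle$-invariant simplex, and in particular $x^r\sim x^s$ (as already noted in the paragraph preceding the lemma). The idea is now to feed $x^{\langle r,s\rangle}$ (spanning a simplex) and the hypothesis $\defect(x)=2$... wait, $\defect(x)=1$ here, so instead I would run the following direct cycle argument. Using $d\sim x^r$, $d\sim x_s$, and \cref{lemma:2} which gives $x_s\sim x^s$, together with $x^r\sim x^s$, I get a $4$-cycle $(d,x^r,x^s,x_s)$; by minimality of $S$ the diagonal $d\sim x^s$ is already present, but I want to play the same game one more step out. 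Applying the $r$-action to the interior picture (recall $x_s\in X_s$ but the relevant symmetry here is $s$, under which $d^s$ is another interior-type vertex) and using \cref{lemma:4} — which with $a=x_s$, $b=x$ forbids $x\sim d^s$ — I expect to produce a short cycle among $d$, $d^s$, $x^s$, $x_s$ and one of its neighbours that has no diagonal, contradicting $6$-largeness. Concretely: $x_s\sim x^s$ and, by \cref{lemma:3} applied at $x_s$ (which has defect $1$), the interior neighbour of $x_s$ — which must be $d$ since $x$ has only the interior neighbour $d$ and the triangles around $x$ force $d$ to lie in a triangle with $x_s$ — is adjacent to $x_s^s$. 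One then forms the $4$- or $5$-cycle $(d, x^s, x_s^s, d^s)$ (length depending on whether $x_s=x_s^s$), and checks that none of its diagonals can exist: $d\sim d^s$ is excluded by minimality of $S$ (it would put $d\in X_s$ and allow a smaller surface, as in \cref{lemma:4}'s proof), and $x^s\sim d^s$ is the $s$-image of $x\sim d$ which is fine — so I instead need the cycle $(d, x_s, x_s^s, d^s, \dots)$; the precise four or five vertices must be pinned down by tracking which triangles sit around $x$ and $x_s$.

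The cleanest route, which I would actually write up, replaces the ad hoc cycle-hunt by the machinery of \cref{lemma:help} and \cref{lemma:help2}: after reducing (via \cref{lemma:help2}, whose hypotheses — $a_u\sim a_w^w$, an edge $(a_w,a_u^w)$, and the relevant defect configuration — I would verify hold at this corner with $u=r$, $w=s$) to the situation where $x^{\langle r,s\rangle}$ spans a simplex, I then invoke \cref{lemma:4} to get $x_r\nsim x_r^s$, and \cref{lemma:help}(1) (with $a_u^w\sim a_w$ coming from \cref{lemma:2} and $a^u\nsim a_w$, i.e. $x^r\nsim x_s$, coming from minimality/geodesic-ness of $\gamma_s$) yields $\defect(x)\le 0$, contradicting $\defect(x)=1$. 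So the overall skeleton is: (i) reduce to $\defect(x)=1$ using \cref{lemma:defects11}(2); (ii) extract the interior neighbour $d$ and its adjacencies via \cref{lemma:3}; (iii) use commutativity of $r,s$ plus \cref{lemma:help2} to assume $x^{\langle r,s\rangle}$ spans a simplex; (iv) apply \cref{lemma:help}(1) to force $\defect(x)\le 0$.

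The main obstacle I anticipate is step (iii): verifying that the hypotheses of \cref{lemma:help2} (or of a bare-hands replacement for it) are actually met at this corner when \emph{both} neighbouring defects are $1$ rather than one being $0$. The $a_w\sim a_u^w$ hypothesis in particular needs the geometry around $x$ to force $x_s\sim x_r^s$ (equivalently $x_r\sim x_s^s$), which is not immediate and may require first establishing that $d$ lies in a common triangle with both $x_r$ and $x_s$ and then pushing this across by the $r$- or $s$-action; if \cref{lemma:help2} cannot be applied directly one must instead run a self-contained $6$-largeness argument on the pentagon $(x^r, x, x^s, \text{(neighbour)}, \dots)$ and rule out each diagonal using minimality of area, minimality of the side lengths, and \cref{lem:u-stableSplx}/\cref{lemma:4}. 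Either way the crux is controlling the single interior vertex $d$ tightly enough that the $6$-largeness accounting closes.
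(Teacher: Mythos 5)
There are several genuine gaps here. First, your step (ii) gets the key adjacencies backwards: \cref{lemma:3} applied to the inner vertex $x_r$ of $\gamma_r$ gives $d\sim x_r^{\,r}$ (the reflection of the \emph{inner} vertex), not $d\sim x^r$; and in fact \cref{lemma:4} (with $a=x_r$, $b=x$, $c=d$) shows $d\nsim x^r$, and likewise $d\nsim x^s$. These \emph{non}adjacencies are exactly what one needs: they force the $5$-cycle $(x^s,x^r,x_r,d,x_s)$ — whose edges come from \cref{lemma:2} and from $x^r\sim x^s$ — to have the diagonal $x_r\sim x_s$. Repeating the argument with $x_s^s$ and $x_r^r$ in place of $x_s$ and $x_r$ (legitimate by \cref{lemma:3}) and translating by $sr$ produces the $4$-cycle $(x_s,x_r,x_s^r,x_r^s)$, which can have no diagonal because $x_r\sim x_r^s$ or $x_s\sim x_s^r$ would place $x_r$ or $x_s$ in $X_r\cap X_s$ and contradict minimality of $S$; that $4$-cycle is the contradiction with $6$-largeness. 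Your proposal never arrives at a diagonal-free short cycle of this kind, and the "ad hoc cycle-hunt" paragraph explicitly leaves the relevant cycle unpinned.

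Second, the "cleanest route" (iii)--(iv) cannot close. \cref{lemma:help2} is not applicable: its hypotheses require $\defect(a)=2$, $\defect(a_u)=1$, $\defect(a_w)=0$ (or the two-$2$-simplex case), neither of which holds when both neighbouring defects are $1$ and $\defect(x)=1$; it is also superfluous, since for the commuting pair $r,s$ the orbit $x^{\langle r,s\rangle}$ spans a simplex automatically by \cref{lem:commuting uv}. More seriously, the conclusion of \cref{lemma:help} is only $\defect(a)\le 1$, so even with its hypotheses verified it cannot contradict $\defect(x)=1$ and cannot deliver the bound $\defect(x)\le 0$ that the lemma asserts. Only your step (i), excluding $\defect(x)=2$ via \cref{lemma:defects11}, is sound as written.
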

\begin{proof}
	Suppose that $\defect(x_r)=\defect(x_s)=1$ and let $d$ be the unique neighbor of $x$ in the interior of $S$. 
	
	By \cref{lemma:4} we have that $d$ is neither adjacent to $x^s$ nor to $x^r$. Examining the 5-cycle $(x^s,x^r,x_r,d,x_s)$ we obtain that $x_s\sim x_r$.
	
	We repeat the same argument using \cref{lemma:3} with $x_s$ replaced by $x_s^s$ or with $x_r$ replaced by $x_r^r$ and obtain that $x_s \sim x_r \sim x_s^r \sim x_r^s \sim x_s$ (as $x_s^s \sim x_r^r$ implies $x_s^r = (x_s^s)^{sr} \sim (x_r^r)^{sr} = x_r^s$). This yields a 4-cycle which cannot have a diagonal as $x_s$ is not adjacent with $x_s^r$ and $x_r$ is not adjacent with $x_r^s$. Otherwise $x_s$ or $x_r$ would be contained in $X_s \cap X_r$, contradicting the minimality of $S$. This proves the lemma. \end{proof}

\begin{lemma}[Bounding defect]\label{lemma:commute3}
	Assume that both $\gamma_r$ and $\gamma_s$ have at least one inner vertex. If $\defect(x_s)=1$ and $\defect(x_r)=-1$ then $\defect(x)\leq 1$.
\end{lemma}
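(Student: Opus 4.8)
The plan is to argue by contradiction, assuming $\defect(x)=2$ (this is the only remaining possibility by Lemma~\ref{lemma:defects11}(1)), and to extract from the $1$-skeleton near the corner $x$ a short cycle that either violates $6$-largeness or forces $x_r$ or $x_s$ into $X_r\cap X_s$, contradicting minimality of $S$. Recall $r$ and $s$ commute, so by Lemma~\ref{lem:commuting uv} (once we have reduced to the case where $x^{\langle r,s\rangle}$ spans a simplex, using Proposition~\ref{prop:H} and $6$-largeness exactly as in the proof of Lemma~\ref{lemma:fixpoint}) the orbit $x^{\langle r,s\rangle}$ spans an $\langle r,s\rangle$-stabilized simplex, and in particular $x^r\sim x^s$ as already noted before the statement.

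First I would set up the local picture. Write $d_1,\dots,d_k$ for the neighbors of $x$ in the interior of $S$ read in order from the side $\gamma_s$ to the side $\gamma_r$, so that $x_s\sim d_1$ and $d_k\sim x_r$; since $\defect(x)=2$ there are exactly three triangles at $x$, hence $k=2$, and the fan at $x$ is $(x_s,d_1,d_2,x_r)$ with $x_s\sim d_1\sim d_2\sim x_r$. Because $\defect(x_s)=1$, the vertex $x_s$ has a unique interior neighbor, which must be $d_1$; by Lemma~\ref{lemma:3} we get $d_1\sim x^s$. Because $\defect(x_r)=-1$, the vertex $x_r$ has four interior neighbors, among them $d_2$; and by Lemma~\ref{lemma:2} applied on $\gamma_s$ and $\gamma_r$ we know $x^s\sim x_s$, $x^r\sim x_r$, as well as $x_s\sim x^r$? — no: Lemma~\ref{lemma:2} only gives adjacency of a vertex in $X_u$ with the $u$-image of an \emph{adjacent} vertex of $X_u$, so I would use it on the edge $x\sim x_s$ in $X_s$ to get $x^s\sim x_s$ and similarly $x^r\sim x_r$, and on the edge $x\sim x_r$ in $X_r$ nothing more. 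I would also record $x^s\sim x^r$ from the commuting simplex.

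Next I would build the contradiction cycle. Using the fan $(x_s,d_1,d_2,x_r)$ together with $d_1\sim x^s$ and the $r$-action (noting $x_s\in X_s$ so $x_s$ is fixed or moved-to-adjacent under $s$ but we want the $r$-side), the natural candidate is a short cycle through $x^s$, $x^r$, $x_r$, $d_2$, $d_1$: I expect $(x^s,d_1,d_2,x_r,x^r)$ to close up, since $x^s\sim d_1$, $d_1\sim d_2$, $d_2\sim x_r$, $x_r\sim x^r$, $x^r\sim x^s$. This is a $5$-cycle, so by $6$-largeness it has a diagonal. Now I would rule out the diagonals one at a time: $x^s\sim d_2$ would (with $d_2\sim x_r$) give $x_s$-type collapse or contradict minimality via Lemma~\ref{lemma:4} applied at the edge $x_s\sim x$... —more carefully, $x^r\sim d_1$ combined with $x^s\sim d_1$ and the reflection structure would put $d_1$ into $X_r$, and together with $d_1\sim x^s\sim x^r$ this tends to contradict Lemma~\ref{lemma:4} (applied with the inner vertex $x_s$ of defect $1$, its side-neighbor $x$, and $c=d_1$); the diagonal $x^r\sim d_1$ or $x^s\sim x_r$ similarly leads to a vertex lying in $X_r\cap X_s$ contradicting minimality of $S$; and $x^s\sim d_2$ would force, via the $r$-image, a $4$-cycle $(x^s,d_2,\ldots)$ with no diagonal. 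Whichever diagonal survives, I push it through the $r$-action (using that $r$ acts simplicially and $x^{sr}=x^{rs}$) to obtain a $4$-cycle on $r$-translated vertices with both potential diagonals forbidden by minimality — exactly the mechanism used in Lemmas~\ref{lemma:commute1} and~\ref{prop:notasimplex} — yielding the contradiction.

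The main obstacle I anticipate is the bookkeeping for the defect$-1$ vertex $x_r$: unlike the clean situation of Lemma~\ref{lemma:commute1} where both side-neighbors have defect $1$ and hence unique interior neighbors, here $x_r$ has four interior neighbors and I must be careful about which one is $d_2$ and how $x_r$ interacts with $x_r^s$, $x_r^r$ and $x^r$. I would handle this by using only the single edge $d_2\sim x_r$ and the adjacency $x_r\sim x^r$ from Lemma~\ref{lemma:2}, avoiding any claim about the other interior neighbors of $x_r$; the defect$-1$ hypothesis is then used only to guarantee that $x_r$ is genuinely an interior-adjacent corner-neighbor situation (in particular $x_r\ne x^r$, else $x_r\in X_r$ is automatic but then one checks $x_r\in X_s$ too via the surviving diagonal, contradicting minimality). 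The rest is the routine case-chase on diagonals of the $5$-cycle, each case terminating either in a violation of $6$-largeness after applying the commuting involution $r$, or in a vertex of $X_r\cap X_s$ that shortens $S$.
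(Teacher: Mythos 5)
There is a genuine gap, and it occurs right at the start of your setup: you have the defect convention at the corner backwards. By Definition~\ref{def:def-vertex}, a boundary vertex $v$ has $\defect(v)=3-\#\{\text{triangles containing }v\}$, so $\defect(x)=2$ means $x$ lies in exactly \emph{one} triangle of $S$, not three. (This is exactly how it is used in Lemma~\ref{lemma:defects11}: a corner of defect $2$ has its two boundary neighbors adjacent.) Consequently $x$ has \emph{no} interior neighbors, the triangle at $x$ is $(x,x_s,x_r)$ with $x_s\sim x_r$, and your fan $(x_s,d_1,d_2,x_r)$ and the $5$-cycle $(x^s,d_1,d_2,x_r,x^r)$ simply do not exist. (A smaller instance of the same slip: $\defect(x_r)=-1$ gives $x_r$ four incident triangles and hence three, not four, interior neighbors.) The correct starting configuration is the $4$-cycle $(x^s,x^r,x_r,x_s)$, from which Lemma~\ref{lemma:4} excludes the diagonal $(x_r,x^s)$ and forces $x_s\sim x^r$; the defect hypotheses then give $x_r\sim d$ (where $d$ is the second neighbor of $x_s$ on $\gamma_s$) and $x_r\sim x_s^s$, and the contradiction is reached only after a further chain of adjacencies culminating in $x_s^s\sim x_s^r$ or $x_r^r\sim x_r^s$, either of which puts $x_s$ or $x_r$ into $X_r\cap X_s$ via a commuting-involutions $4$-cycle and contradicts minimality of $S$.

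Beyond the wrong local picture, the remainder of your plan is too indeterminate to count as a proof: phrases like ``whichever diagonal survives, I push it through the $r$-action'' conceal precisely the part of the argument that is delicate. The actual case analysis requires establishing several specific auxiliary edges (e.g.\ $x\sim x_s^r$, $x_s^r\sim x_r^{rs}$, $x_r^s\sim x_s^s$) before one can produce the $5$-cycle $(x_r^{rs},x_r^s,x_s^s,x,x_s^r)$ whose forced diagonals yield the conclusion, and one of the four surviving subcases spawns yet another $5$-cycle. Your overall strategy (contradiction from $\defect(x)=2$, exploiting that $r$ and $s$ commute, aiming to land $x_s$ or $x_r$ in $X_r\cap X_s$) is the right one, but as written the argument cannot be repaired without first correcting the triangle count at $x$ and then supplying the missing adjacency chain.
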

\begin{proof}
	A surface satisfying the conditions of the lemma is illustrated in Figure~\ref{fig:commute3}\subref{fig:def10_b_SubfigA}.
	We assume that $x$ has defect $2$ and show that this implies that $x_s^{s}\sim x_s^{r}$ or $x_r^{r}\sim x_r^{s}$. 
	If $x_s^{s}\sim x_s^{r}$, we conclude that $X$ contains the cycle $(x_s,x_s^s, x_s^r, x_s^{sr})$ using the action of $\Gamma$. This cycle is illustrated in Figure~\ref{fig:commute3}\subref{fig:def10_b_SubfigB}. By $6$-largeness this cycle has a diagonal. Since $r$ and $s$ commute, the existence of any diagonal implies that $x_s\sim x_s^r$. But then $x_s$ is contained in $X_s$ and $X_r$ and we can choose $x_s$ as corner of $S$ instead of $x$ yielding a surface with smaller area than $S$ which contradicts minimality of $S$. If $x_r^{r}\sim x_r^{s}$, we obtain by similar arguments that $x_r\sim x_r^s$, contradicting again minimality of $S$. 
	
	\subfiglabelskip=0pt
	\begin{figure}
		\centering
		\subfigure[][]{	\label{fig:def10_b_SubfigA}	
			\includegraphics{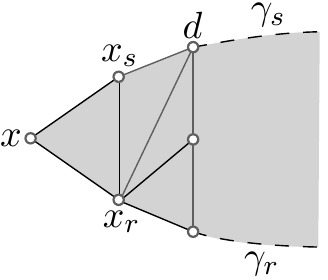}	}
		\subfigure[][]{\label{fig:def10_b_SubfigB}
			\includegraphics{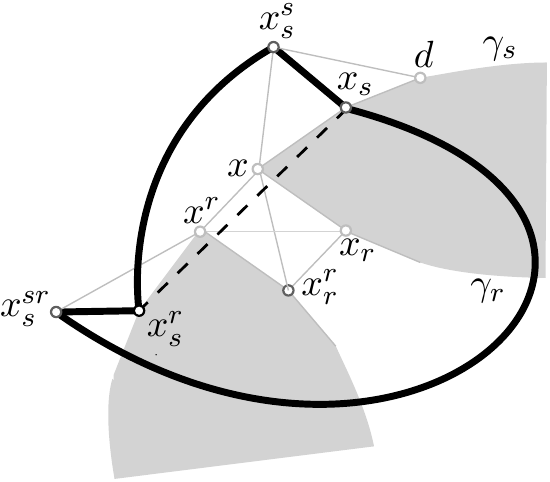}}	
		\caption{This illustrated the situation of Lemma~\ref{lemma:commute3}. The $4$-cycle in \subref{fig:def10_b_SubfigB} induces the existence of the dashed diagonal.}
		\label{fig:commute3}
	\end{figure}

	So it remains to show that $x_s^{s}\sim x_s^{r}$ or $x_r^{r}\sim x_r^{s}$. Let $d$ be the neighbor of $x_s$ different from $x$ on $\gamma_s$. 
	Notice that $x^{\langle r,s\rangle}$ has $4$ elements since $r$ and $s$ commute. Using $6$-largeness and the fact that $x\in X_s\cap X_r$ we obtain that $x^{\langle r,s\rangle }$ is a simplex. Thus $x^r\sim x^s$. By Lemma~\ref{lemma:2} the vertex $x_r \sim x^r$ and $x_s \sim x^s$. Since $x$ has defect $2$ the complex $X$ contains the cycle $(x^s,x^r,x_r,x_s)$.  By Lemma~\ref{lemma:4} this cycle does not have the diagonal $(x_r,x^s)$. Thus $x_s \sim x^r$ which implies that $x \sim x_s^r$. 
	
	Since $\defect(x_r)=-1$ and $\defect(x_s)=1$ we obtain that  $x_r\sim d$. Since $x_s$ and $x_r$ are both contained in $X_s$, Lemma~\ref{lemma:2} implies that $d \sim x_s^s$ and $x_s^s \sim x$. Thus $X$ contains the cycle $x \sim x_r \sim d \sim x_s^s$. Since $\gamma_s$ is a geodesic, this cycle does not have the diagonal $(x,d)$. Thus $x_r \sim x_s^s$. 
	
	We obtain moreover that $x_s^r \sim x_r^{rs}$, $x_r^s\sim x_r^{rs}$ and $x_r^s \sim x_s^s$ from the fact that $x_s \sim x_s^s$, $x_r \sim x_r^r$ and $x_r \sim x_s$. Since $x\in X_s$ we also conclude using Lemma~\ref{lemma:2} that $x \sim x_s^s$ and that $X$ contains the cycle $(x_r^{rs}, x_r^s,x_s^s,x,x_s^r)$. This cycle has to have diagonals. By Lemma~\ref{lemma:4} we obtain that $x_r\nsim x^s$ and thus this cycle does not contain the diagonal $(x,x_r^s)$. 
	There are thus four remaining possibilities for the diagonal. If $x_s^s \sim x_s^r$, we are done. If $x_r^s \sim x_s^r$, $X$ contains the cycle  $(x_r^{s}, x_s^r,x,x_s^s)$ and since $x \nsim x_r^s$ it follows $x_s^r \sim x_s^s$ and we are done. In the remaining two cases  $x_s \sim x_r^r$. If $x_s^s \sim x_r^{rs}$, then $x_s \sim x_r^r$ since $\Gamma$ operates simplicial. If $x \sim x_r^{rs}$, then $X$ contains the cycle  $(x_r^{rs}, x_r^s,x_s^s,x)$ and since $x \nsim x_r^s$ we conclude that $x_r^{rs}\sim x_s^s$ and thus $x_s \sim x_r^r$. 
	
	Using the $\Gamma$-action and the fact that $x_s \sim x_r^r$ we obtain a cycle $(x_s,x_r^r,x_s^{sr},x_r^s,x_s)$. But then $x_s^r \sim x_s^s$ or $x_r^r \sim x_r^s$ and we are done. 
\end{proof}


\section{Defects along the sides of $S$}\label{sec:def-sides-sum}

We assume in this section that $\Gamma$ acts {without stabilizing a simplex}  on a systolic complex $X$ and study defects along the sides of a nondegenerate minimal surface $S$ as constructed in \cref{sec:minimalSurface}. 
In the following we use notation as in \ref{notation2}. 
In particular, $\gamma_u$ denotes an arbitrary side of $S$. We will show that one can choose $S$ in such a way that the defect along any side is nonnegative. 

We summarize some first observations about the defects along a side obtained from simple counting arguments.   

\begin{lemma}[Counting defects]\label{lem:counting}
	The defect along any side $\gamma_u$ satisfies the following properties.  
	\begin{enumerate}
		\item If $\defect(a_u)<0$, then $\defect(\gamma_u) \leq 0$. \label{item_1}
		\item If $\defect(a_u)=-1$  and $\defect(\gamma_u)=0$, then the vertex closest to $b$ on $\gamma_u$ with nonzero defect has defect $1$. \label{item_2}
		\item If $\defect(a_u)=-1$ and $\defect(\gamma_u)=-1$, then the vertex closest to $b_u$ on $\gamma_u$ with nonzero defect has defect $-1$. \label{item_3}
		\item If $\defect(a_u)\le-2$, then $\defect(\gamma_u)\leq -1$. \label{item_4}
		\item If $\defect(a_u)=-2$ and $\defect(\gamma_u)=-1$, then the vertex closest to $b$ on $\gamma_u$ with nonzero defect has defect $1$.\label{item_5}
		\item If $\defect(a_u)<0$ and $\defect(b_u)<0$  and one of them has defect at most $-2$, then $\defect(\gamma_u)\leq -2$. \label{item_6}
		\item If $\defect(\gamma_u)=1$, the vertex on $\gamma_u$ closest to either end of $\gamma_u$ with nonzero defect has defect $1$.
		\label{item_7} 
	\end{enumerate}
\end{lemma}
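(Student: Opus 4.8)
The plan is to derive all seven items from two facts established earlier: \cref{lem_sumdef}, which bounds the total defect along a geodesic in the boundary of a systolic disc by $1$, controls inner vertices of defect $1$ by an intervening negative vertex, and shows any inner vertex has defect at most $1$; and \cref{lemma:defects11}(1) together with the corner lemmata of \cref{sec:def-corner}, which bound the defect of $a_u$ below by $-2$ once one recalls that $\gamma_u$ is a geodesic in $S$, hence a geodesic in the boundary of the systolic disc $S$, so \cref{lem_sumdef} applies directly to it. The basic bookkeeping device is that the inner vertices of $\gamma_u$ are exactly $a_u=v_1,v_2,\dots,v_m=b_u$ (possibly $a_u=b_u$), each of defect $\le 1$ by \cref{lem_sumdef}(1), and that between any two defect-$1$ vertices there sits a vertex of defect $\le -1$ by \cref{lem_sumdef}(2); combined with $\sum_i \defect(v_i)=\defect(\gamma_u)\le 1$ this pins down the sign pattern near the ends.

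First I would record the elementary combinatorial lemma that underlies everything: if $(d_1,\dots,d_m)$ is a sequence of integers with each $d_i\le 1$, with no two indices $i<j$ having $d_i=d_j=1$ unless some $i<k<j$ has $d_k<0$, and with $\sum d_i=\sigma$, then strong constraints hold on the first and last nonzero entries depending on $d_1$ and $\sigma$. Items \ref{item_1}--\ref{item_7} are then each a special case. For \ref{item_1}: $d_1=\defect(a_u)<0$ and every $d_i\le 1$; if $\defect(\gamma_u)\ge 1$ then (since the total is at most $1$ by \cref{lem_sumdef}(3)) exactly one $d_i=1$ and all others vanish, but this single defect-$1$ vertex together with $d_1<0$ and $\sum=1$ forces $d_1=-1$ and that vertex to be the only nonzero entry besides $a_u$, contradicting $\sum=1$ unless... — more carefully, $\sum\le 0$ follows because the part of the sum after the first nonzero vertex of positive defect is itself $\le 0$. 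Items \ref{item_2}, \ref{item_3}, \ref{item_5} are then just: given $d_1$ and $\sigma\in\{0,-1\}$, read off what the first nonzero $d_i$ with $i>1$ must be, using the no-two-consecutive-ones rule and $\sum=\sigma$. Items \ref{item_4} and \ref{item_6} are the same argument with $d_1\le -2$ (resp.\ both $d_1,d_m<0$ and one $\le -2$): the positive contributions are capped at $+1$ each and must be separated by negatives, so the total cannot climb back up past $-1$ (resp.\ $-2$). Item \ref{item_7} is immediate: if $\sum d_i=1$ then there is at least one defect-$1$ vertex, and walking in from either end the first nonzero vertex encountered cannot be negative, since a negative entry before all the positive ones would, with the no-two-ones constraint forcing each later positive run to be preceded by a negative, make the total $\le 0$.

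The main obstacle I expect is not any single step but the careful case analysis needed to make the ``first nonzero vertex'' statements precise, because \cref{lem_sumdef}(2) only guarantees a \emph{negative} vertex strictly between two defect-$1$ vertices, not that the sign pattern is otherwise monotone; so I would need to argue, for instance in \ref{item_2}, that after $a_u$ (of defect $-1$) with total $0$ the next nonzero vertex cannot be negative — if it were, then to reach total $0$ we would need strictly more positive than negative weight afterwards, but each $+1$ must be preceded by a $-1$, giving total $\le -1<0$, a contradiction. Repeating this type of ``every $+1$ is paid for by a preceding $-1$'' ledger argument, tailored to the prescribed value of $\defect(a_u)$ and $\defect(\gamma_u)$ in each item, dispatches all seven cases. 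One should also double-check the degenerate situation $a_u=b_u$ (a single inner vertex), where items \ref{item_2}, \ref{item_3}, \ref{item_5}, \ref{item_6} are either vacuous or trivial, and the situation where $\gamma_u$ has no inner vertex at all, where $\defect(\gamma_u)=0$ by \cref{def:def-geodesic} and every hypothesis involving $a_u$ is vacuous.
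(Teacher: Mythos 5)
Your approach is exactly the paper's: the published proof of \cref{lem:counting} consists of quoting \cref{lem_sumdef} (every inner vertex of $\gamma_u$ has defect at most $1$, and two defect-$1$ vertices are separated by a vertex of negative defect) followed by the single sentence that the claim follows by counting. Your ledger argument --- $p$ positive entries contribute at most $p$ and force at least $p-1$ interleaved negative entries, so the net positive contribution is at most $+1$ --- is the right way to make that counting explicit, and it does dispatch items \ref{item_1}, \ref{item_4}, \ref{item_6} and \ref{item_7} as you describe (including the degenerate cases you flag at the end).

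Two corrections. First, in items \ref{item_2} and \ref{item_5} the vertex in question is the nonzero-defect vertex \emph{closest to the far corner} $b$, i.e.\ the \emph{last} nonzero entry $d_j$ of your sequence, not ``the first nonzero $d_i$ with $i>1$'' as you write; the distinction matters because these items are later fed into \cref{lemma:shift} applied from the corner $b$ (or $c$). Your ledger still works when run from the $b$ end: if the last nonzero entry were negative it would lie after all $p$ positive entries and hence be distinct from the $p-1$ separators and from $a_u$, giving $\defect(\gamma_u)\le \defect(a_u)+p-(p-1)-1$, which equals $-1$ in item \ref{item_2} and $-2$ in item \ref{item_5}, contradicting the assumed value of $\defect(\gamma_u)$ in each case. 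Second, item \ref{item_3} cannot be ``read off'' by the same mechanism: the defect sequence $(-1,-1,1,0)$ satisfies every constraint of \cref{lem_sumdef} (all entries $\le 1$, only one entry equal to $1$, total $-1$) and has $\defect(a_u)=-1$ and $\defect(\gamma_u)=-1$, yet the nonzero entry closest to $b_u$ is $+1$, not $-1$. So the counting constraints alone do not yield item \ref{item_3} as stated; this is as much a gap in the paper's one-line proof as in your plan (the item is in fact never invoked later in the paper), but your proposal should not claim it follows by the same reading-off argument as items \ref{item_2} and \ref{item_5}.
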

\begin{proof} By Lemma~\ref{lem_sumdef}, any vertex on $\gamma_u$ has defect at most $1$. Furthermore two vertices of positive defect on $\gamma_u$ are separated by a vertex of negative defect. The claim follows by counting.  
\end{proof}

The key tool of this section is an edge swap, made precise in \cref{def:edgeswap}, which allows us to vary the surface $S$ by replacing two triangles forming a square that touches the boundary by a square on the same four vertices but with the other possible diagonal. Such a move will keep minimality of the surface intact while altering its defects on the boundary. The main goal is to prove that  there always exists a sequence of edge-swaps such that the resulting surface only contains sides of nonnegative defect. 

\begin{definition}[Edge-swaps and swap surfaces]\label{def:edgeswap}
	Let $S$ be a surface and $\gamma_u$ one of its sides.
	Let $p$ and $q$ be two adjacent inner vertices in $\gamma_u$. Let $m$ and $m'$ be two distinct vertices in $S \setminus \gamma_u$. Suppose that $(p,m,m',q)$ forms a $4$-cycle with diagonal $(m,q)$. 
	If a surface $S'$ differs from another surface $S$ by replacing the two simplices on $p,q,m$ and on $q,m,m'$ by the simplices on $p,m,m'$ and $p,m',q$, i.e.\ swapping the edge $(q, m)$ by $(p,m')$, we say $S'$ is obtained by an \emph{edge-swap along $\gamma_u$} from $S$.
	We call $S'$ a \emph{swap-surface of $S$ (along $\gamma_u$)} if $S'$ is obtained from  $S$ by a sequence of edge-swaps along (the same) $\gamma_u$.
	A \emph{repeated swap-surface $S'$ of $S$} is the end result of a sequence of swap-surfaces of  $S$ (along several sides), i.e.\ obtained by a sequence of edge-swaps which might be along changing sides. 
\end{definition}

Note that if a corner has defect 2 it may happen that an edge-swap along one of its incident sides simultaneously is an edge swap along the other incident side.

The following lemma shows  the existence of a swap-surface if two adjacent inner vertices of a side $\gamma_u$ have defect $1$ and defect $0$ respectively. 

\begin{lemma}[Existence of edge-swaps] \label{lemma:swaps}
	Let $p$ and $q$ be adjacent inner vertices on $\gamma_u$ of defect $1$ and $0$ respectively. Let $m$  be the unique neighbor of $p$ in $S$ not contained in $\gamma_u$ and let $m'$ be the neighbor of $q$ other than $m$ not contained in $\gamma_u$. Then there exists a surface $S'$ obtained by an edge-swap along $\gamma_u$. In particular, the  $1$-skeleton of $S'$ contains the edge $(p,m')$  and $S$ the edges $(q,m)$.
\end{lemma}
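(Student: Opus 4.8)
The plan is to locate the square formed by $p$, $q$ and their neighbors in the interior of $S$, verify that the hypothesis of Definition~\ref{def:edgeswap} applies to it, and then invoke that definition directly. First I would set up the local picture: since $p$ is an inner vertex of $\gamma_u$ with $\defect(p)=1$, it has by definition exactly two boundary neighbors (its neighbors on $\gamma_u$) and exactly one neighbor $m$ in the interior of $S$, and the two triangles of $S$ at $p$ are the one spanned by $p$, $q$, $m$ and the one spanned by $p$, $m$, and the other $\gamma_u$-neighbor of $p$. Since $\defect(q)=0$ and $q$ is an inner vertex, $q$ lies in exactly three triangles of $S$; two of them are the triangle on $p,q,m$ and the triangle on $q$ and its two $\gamma_u$-neighbors (which are $p$ and the next vertex along $\gamma_u$). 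Hence there is exactly one further triangle of $S$ at $q$, and it is spanned by $q$, $m$, and a third vertex $m'$ which is therefore the neighbor of $q$ in $S$ other than $m$ that is not on $\gamma_u$, exactly as in the statement.

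Next I would check that $(p,m,m',q)$ is a $4$-cycle with diagonal $(m,q)$ in the sense required. The edges $(p,m)$, $(m,q)$, $(q,m')$ and $(m,m')$ are all present: the first three come from the triangles just identified, and $(m,m')$ is an edge because $q,m,m'$ span a triangle. It remains to see that $p$, $m$, $m'$, $q$ are four distinct vertices and that $(p,q)$ is an edge (which it is, as $p$ and $q$ are adjacent on $\gamma_u$); distinctness of $m$ and $m'$ is part of the choice of $m'$, and $p\ne m$, $q\ne m'$ since the one lies on $\gamma_u$ and the other does not, while $p\ne m'$ and $q\ne m$ likewise. The diagonal present in $S$ inside this square is $(m,q)$, not $(p,m')$, since the two triangles of $S$ filling the square are precisely $p,q,m$ and $q,m,m'$. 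This is exactly the configuration in Definition~\ref{def:edgeswap}, so performing the edge-swap replaces these two triangles by the triangles on $p,m,m'$ and on $p,m',q$, yielding the surface $S'$ with $(p,m')$ an edge and without the edge $(q,m)$ in its $1$-skeleton. That $S'$ is again a surface spanning the same cycle $C$ is immediate since the $4$-cycle $(p,m,m',q)$ together with either diagonal triangulates the same quadrilateral disc and the rest of $S$ is untouched.

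The one point requiring a little care — and the main obstacle to a completely formal argument — is confirming that $m'$ genuinely lies in $S\setminus\gamma_u$ and is distinct from $m$, i.e.\ that the third triangle at $q$ does not degenerate or fold back onto the boundary. Here I would use that $S$ is a genuine triangulated disc (a subcomplex isomorphic to one, by Definition~\ref{def:surface}) so the triangles around the inner vertex $q$ form a cyclically ordered fan with distinct far vertices; since two of the three triangles at $q$ are already accounted for and contribute the vertices $p$, $m$, and the far $\gamma_u$-neighbor of $q$, the remaining triangle's third vertex $m'$ is forced to be an interior vertex of $S$ (it cannot be on $\gamma_u$, else $q$ would have a third boundary neighbor) and $m'\ne m$ (else that triangle would coincide with the triangle on $p,q,m$). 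With those local combinatorics of the disc in hand, the lemma follows directly from Definition~\ref{def:edgeswap}.
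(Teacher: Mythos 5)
There is a genuine gap, and it is precisely the substantive content of the lemma. You treat the edge-swap as an abstract re-triangulation of the disc: you verify the local combinatorics around $p$ and $q$ (which triangles of $S$ meet them, that $m'$ is interior and distinct from $m$) and then ``perform the swap.'' But $S'$ must again be a \emph{subcomplex of $X$} (Definition~\ref{def:surface}), so the new triangles on $p,m,m'$ and on $p,m',q$ must actually be simplices of $X$; since $X$ is flag, this amounts to showing that the edge $(p,m')$ exists in $X$. Nothing in your argument addresses this — Definition~\ref{def:edgeswap} is a naming convention for when such an $S'$ exists, not an existence statement, and the lemma is titled ``Existence of edge-swaps'' exactly because the existence of $(p,m')$ in $X$ is what has to be proved.

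The paper establishes $p\sim m'$ using the ambient systolic structure and the group action: with $d\neq p$ the other neighbor of $q$ on $\gamma_u$, the closed path $(p,m,m',d,q^u)$ is a $5$-cycle (the edges $d\sim q^u$ and $q^u\sim p$ come from Lemma~\ref{lemma:2}); the diagonal $(p,d)$ is excluded because $\gamma_u$ is a geodesic and $(m,q^u)$ is excluded by Lemma~\ref{lemma:4}, so $6$-largeness forces the diagonals $(p,m')$ and $(m',q^u)$. Only then do $p,q,m,m'$ span a $3$-simplex and the swap can be carried out. Your purely disc-combinatorial reading would ``prove'' the analogous swap in any triangulated disc whatsoever, which cannot be right; the hypotheses that $p$ lies on the invariance-set geodesic $\gamma_u$ and that $X$ is $6$-large are essential and unused in your argument.
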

\begin{proof}
	Let $d\neq p$ be the neighbor of $q$ on $\gamma_u$. 
	Then $(p,m,m',d,q^u)$ forms a 5-cycle (by Lemma~\ref{lemma:2}) and hence has two diagonals. As $p$ is not adjacent to $d$ (as otherwise $\gamma_u$ would not be a geodesic), and $m$ not adjacent to $q^u$ by Lemma~\ref{lemma:4}, the only remaining possibility is that $q^u$ and $p$ are both  adjacent to $m'$. Thus the vertices $p,q,m$ and $m'$ span a 3-simplex $\Delta$.
	Two of the faces of this 3-simplex are triangles in $S$, namely $(p,q,m)$ and $(m,q,m')$ which can be replaced by the triangles $(p,q,m')$ and $(p,m,m')$ to obtain the desired surface $S'$. 
\end{proof}

Note that the following lemma in particular applies to the case where the defect along $\gamma_u$ is $1$. It will be used numerous times throughout the remainder of this section. 

\begin{lemma}[Moving defects with swaps]\label{lemma:shift}
	If the vertex closest to $a$ on $\gamma_u$ with nonzero defect has defect $1$, one can replace the surface $S$ by a surface $S'$ obtained by an edge-swap along $\gamma_u$ such that $a_u$ has defect $1$ in $S'$.  
\end{lemma}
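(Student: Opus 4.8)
The plan is to proceed by induction on the distance from $a$ to the closest vertex of nonzero defect along $\gamma_u$, repeatedly applying the edge-swap lemma (\cref{lemma:swaps}) to "push" the defect-$1$ vertex one step closer to the corner $a$. Let $p$ be the vertex on $\gamma_u$ closest to $a$ with nonzero defect; by hypothesis $\defect(p)=1$. If $p=a_u$ there is nothing to prove, so assume $p$ is separated from $a$ by at least one vertex of defect $0$ on $\gamma_u$, and let $q$ be the neighbor of $p$ on $\gamma_u$ lying strictly between $a$ and $p$; by the choice of $p$ we have $\defect(q)=0$.

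The key step is then to invoke \cref{lemma:swaps} with the roles of its vertices $p,q$ played by our $p,q$: since $p$ has defect $1$ and its neighbor $q$ (on the side towards $a$) has defect $0$, the lemma produces a surface $S'$ obtained from $S$ by an edge-swap along $\gamma_u$, in which the edge $(q,m)$ of $S$ is replaced by the edge $(p,m')$, where $m$ is the unique interior neighbor of $p$ and $m'$ the interior neighbor of $q$ distinct from $m$. One checks directly from the definition of the edge-swap and \cref{def:def-vertex} how the defects change: the two triangles at $p$ not on $\gamma_u$ are replaced by one, so $\defect_{S'}(p)=\defect_S(p)+1$ would be wrong---rather, in $S'$ the vertex $p$ gains a triangle on the side of $q$ and $q$ loses one, so $\defect_{S'}(p)=0$ and $\defect_{S'}(q)=1$, while all defects strictly between $q$ and $a$, as well as the defects of interior vertices, are unchanged. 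Thus in $S'$ the closest vertex to $a$ on $\gamma_u$ with nonzero defect is $q$, which is one step closer to $a$, and it still has defect $1$. Since $S'$ has the same boundary cycle and the same area as $S$ (an edge-swap replaces two triangles by two triangles), $S'$ retains the minimality properties of $S$, so the hypotheses of the lemma remain in force with $S'$ in place of $S$.

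Iterating this procedure finitely many times---at most the number of inner vertices of $\gamma_u$ between $a$ and $p$---we arrive at a repeated swap-surface (indeed a swap-surface along the single side $\gamma_u$) in which the vertex of nonzero defect closest to $a$ is exactly $a_u$, and it has defect $1$. This is the asserted $S'$.

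\textbf{Main obstacle.} The one point requiring care is verifying that \cref{lemma:swaps} really does apply at each stage: its hypotheses require $p,q$ to be \emph{adjacent inner} vertices of $\gamma_u$ of defects $1$ and $0$. Adjacency and innerness are clear from the setup, and the defect condition holds by the inductive choice of $p$ together with the defect bookkeeping above; but one must also confirm that the surface at each stage is still the minimal surface $S$ from \cref{sec:surfaceConstruction} up to edge-swaps, so that \cref{lemma:4} (used inside the proof of \cref{lemma:swaps}) is still available. This is exactly the observation that edge-swaps preserve the boundary and the area, hence preserve minimality, so no genuine difficulty arises; the proof is essentially a clean induction once this is noted.
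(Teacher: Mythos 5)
Your proof is correct and takes essentially the same route as the paper: iterate \cref{lemma:swaps} to move the defect-$1$ vertex one step closer to $a$ at each stage, observing that an edge-swap preserves the boundary and the area, so minimality (and hence the applicability of \cref{lemma:swaps} and \cref{lemma:4}) is retained throughout. One small slip in your bookkeeping: the defects of $m$ and $m'$ do change under a swap (by $\mp 1$), so interior defects are not all unchanged, but since neither $m$ nor $m'$ lies on $\gamma_u$ this does not affect the argument.
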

\begin{proof}	
	Let $a_u'$ be the vertex  closest to $a$ on $\gamma_u$ with nonzero defect and $n$ its distance to $a_u$. if $n = 0$, we are done. Suppose that $n >0$. Let $\bar a_u$ be the vertex on $\gamma_u$ that has distance $n-1$ to $a_u$. By assumption, the defect of $a_u$ is zero and the defect of $\bar a_u$ is one. We apply Lemma~\ref{lemma:swaps} to these two vertices. This way, we obtain a new surface whose $1$-skeleton differs from $S$ by swapping two edges incident to $\bar a_u$ and $a_u'$. By construction, $a_u'$ has defect $0$ and $\bar a_u$ has defect $1$ in the new surface. By repeating this procedure $n$ times, we obtain a swap surface where all the swaps happened along $\gamma_u$ and in which $a_u$ has defect $1$. Each edge-swap  exchanges an edge $e_i$ for an edge $e_i'$  in $X$ such that the four endvertices of $e_i$ and $e_i'$ are contained in a $4$-cycle in the $1$-skeleton of $S$. Furthermore, either $e_i$ or $e_i'$ is contained in $S$. Hence, we can apply the $n$-th edge-swap to $S$. The resulting surface $S'$ is obtained by an edge-swap along $\gamma_u$ and $a_u$ has defect $1$. Hence $S'$ is the desired surface.
\end{proof}

\begin{lemma}[Effect of swaps on defects of edges]\label{lem:swap_othersides}
	If $S'$ is a surface obtained from $S$ by an edge-swap along $\gamma_u$, then the following is true. 
	\begin{enumerate}
		\item The defect of $\gamma_u$ in $S$ is the same as the defect of $\gamma_u$ in $S'$.
		\item The defects of $\gamma_w$ and $\gamma_v$ in $S'$ differ from their defects in $S$ by at most $1$. 
	\end{enumerate}
\end{lemma}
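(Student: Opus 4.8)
\textbf{Proof plan for Lemma~\ref{lem:swap_othersides}.}

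The plan is to analyze a single edge-swap and then conclude the statement for a sequence of swaps by applying the single-swap statement iteratively; since a swap along $\gamma_u$ only affects a bounded neighbourhood of $\gamma_u$, I expect each step to be a short local computation. Recall from \cref{def:edgeswap} that the swap exchanges the two triangles on $p,q,m$ and on $q,m,m'$ (with $p,q$ adjacent inner vertices of $\gamma_u$ and $m,m' \in S \setminus \gamma_u$) for the two triangles on $p,m,m'$ and on $p,m',q$. The first thing I would record is the effect on the link of each vertex involved: the swap removes the edge $(q,m)$ and inserts the edge $(p,m')$, so the number of triangles containing $q$ drops by one, the number containing $p$ rises by one, and the numbers containing $m$ and $m'$ are unchanged. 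All other vertices, and in particular every vertex of $\gamma_w$ and $\gamma_v$ other than the two endpoints they may share with $\gamma_u$, see exactly the same set of triangles in $S$ and in $S'$, so their defects are unchanged.

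For item (1): $p$ and $q$ are inner vertices of $\gamma_u$, so their defects are computed with the ``$3 - |\{\text{triangles}\}|$'' formula. The swap increases $|\{\text{triangles at }p\}|$ by $1$ and decreases $|\{\text{triangles at }q\}|$ by $1$, hence $\defect_{S'}(p) = \defect_S(p) - 1$ and $\defect_{S'}(q) = \defect_S(q) + 1$. Every other inner vertex of $\gamma_u$ keeps its defect: the only candidates for change are $m$ and $m'$, and these are not on $\gamma_u$ (they lie in $S \setminus \gamma_u$), while the remaining inner vertices of $\gamma_u$ are untouched by the swap. Summing over the inner vertices of $\gamma_u$, the $\pm 1$ at $p$ and $q$ cancel, so $\defect(\gamma_u)$ is the same in $S$ and in $S'$.

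For item (2): the side $\gamma_v$ shares exactly one endpoint with $\gamma_u$, namely the common corner, call it $e \in \{a,b,c\}$; likewise $\gamma_w$ shares the other endpoint $e'$ with $\gamma_u$. Endpoints of a side are not inner vertices of that side, so they do not contribute to $\defect(\gamma_v)$ or $\defect(\gamma_w)$. The only way a swap along $\gamma_u$ can change the defect of an inner vertex of $\gamma_v$ is if one of $p,q,m,m'$ is such a vertex. Since $p,q$ lie in the interior of $\gamma_u$ they are not on $\gamma_v$; and $m, m'$ have unchanged defect. Hence, strictly speaking, the defects of $\gamma_w$ and $\gamma_v$ are \emph{unchanged} by a single edge-swap along $\gamma_u$. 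I would nevertheless state the bound as ``$\le 1$'' because the subtlety flagged after \cref{def:edgeswap} is real: when the shared corner $e$ has defect $2$, one of the vertices $p,q$ may be adjacent to $e$ along \emph{both} $\gamma_u$ and $\gamma_v$, so that $p$ or $q$ is an inner vertex of $\gamma_v$ as well; in that case the $\pm 1$ change in the number of triangles at that vertex shifts $\defect(\gamma_v)$ by exactly $1$. Since at most one of $p,q$ can simultaneously be an inner vertex of another side, the total change to $\defect(\gamma_v)$ (and similarly $\defect(\gamma_w)$) is at most $1$ in absolute value.

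Finally, for a general edge-swap surface $S'$ obtained from $S$ by a sequence of swaps all along $\gamma_u$, I would apply the single-swap analysis inductively: item (1) gives that $\defect(\gamma_u)$ is preserved at every stage, hence overall; and since each individual swap along $\gamma_u$ changes $\defect(\gamma_v)$ and $\defect(\gamma_w)$ by at most $1$, but in fact only the swaps touching the relevant corner can change them and such a change is monotone in a fixed direction, the cumulative change is still at most $1$ — this is the one point that needs a little care, and it is the main (mild) obstacle. The cleanest way to handle it is to observe that only the \emph{last} swap in the sequence that involves a vertex adjacent to the shared corner can affect the corresponding side's defect in the end result, because an edge-swap that does not touch that corner leaves the relevant part of the boundary of the surface completely unchanged; so across the whole sequence at most one swap contributes, giving the bound $\le 1$. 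This completes the plan.
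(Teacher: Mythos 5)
Your local computation of how the flip changes the triangle counts is wrong at $m$ and $m'$, and this error undermines your proof of item (2). The swap replaces the triangles $\{p,q,m\}$ and $\{q,m,m'\}$ by $\{p,m,m'\}$ and $\{p,m',q\}$; counting occurrences, $p$ gains one triangle, $q$ loses one, but also $m$ \emph{loses} one (it is an endpoint of the removed diagonal $(q,m)$) and $m'$ \emph{gains} one (it is an endpoint of the inserted diagonal $(p,m')$). So the defects of $m$ and $m'$ change by $+1$ and $-1$ respectively; they are not unchanged as you claim. This does not harm item (1), since $m,m'\notin\gamma_u$ and the $\mp1$ at $p$ and $q$ cancel, but it is exactly the point of item (2): if $m$ or $m'$ happens to be an inner vertex of $\gamma_v$ or $\gamma_w$ (they lie in $S\setminus\gamma_u$, which may well meet the boundary), then that side's defect changes — by $0$ if both or neither of $m,m'$ lie on it (the $\pm1$ cancel), and by exactly $1$ if precisely one does. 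That is the paper's argument.

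The substitute mechanism you propose for item (2) — that $p$ or $q$ could simultaneously be an inner vertex of $\gamma_v$ — cannot occur: $p$ and $q$ are inner vertices of $\gamma_u$, hence not corners, and the boundary cycle $C=\gamma_r\star\gamma_t\star\gamma_s$ visits each vertex once, so a non-corner boundary vertex lies on exactly one side. Consequently your intermediate claim that ``strictly speaking the defects of $\gamma_v$ and $\gamma_w$ are unchanged by a single swap'' is false, and your reason for the bound $\le 1$ is not the correct one. (Your closing paragraph about sequences of swaps is also not needed: the lemma concerns a single edge-swap; the iterated statement is handled elsewhere and, as your own hedging suggests, does not follow by the ``only the last swap matters'' heuristic.)
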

\begin{proof}
	Every edge-swap as in Lemma~\ref{lemma:swaps} changes the defects of the vertices of the involved edges on the boundary curve of $S$. Clearly the two vertices of the edge  in $\gamma_u$ are not contained in $\gamma_v\cup \gamma_w$. If the other two vertices are contained in the boundary of $S$, the defect of one of them increases and the defect of the other decreases by $1$. Thus the defect of $\gamma_w$ remains the same if it contains both vertices or none of them. It changes by $1$, if it contains one vertex. The same holds for $\gamma_v$. 
\end{proof}

\begin{lemma}[Effects of swaps on defects of corners] \label{lem:swap_corner}
	Suppose $S'$ is a  swap-surface of $S$ along $\gamma_u$. Then the defects of the corners $a,b$ on $\gamma_u$ in $S'$ are the same as in $S$. Corner $c$ has defect $2$ in $S$ if and only if it has defect $2$ in $S'$.
\end{lemma}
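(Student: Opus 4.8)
The plan is to reduce everything to a single edge-swap and track how the three faces of the defect formula behave. First I would observe that, by \cref{def:edgeswap}, it suffices to treat the case where $S'$ is obtained from $S$ by a \emph{single} edge-swap along $\gamma_u$, since a swap-surface along $\gamma_u$ is a finite composition of such moves and the claim about the corners $a$, $b$ will propagate through the composition (the statement about $c$ requires the slightly more careful ``$2$ if and only if $2$'' wording, which I address below). So fix adjacent inner vertices $p, q$ of $\gamma_u$ and the $4$-cycle $(p, m, m', q)$ with diagonal $(m, q)$, and let $S'$ be the surface in which the two triangles on $\{p, q, m\}$ and $\{q, m, m'\}$ are replaced by those on $\{p, m, m'\}$ and $\{p, m', q\}$.

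The key point is simply that an edge-swap changes the number of incident triangles only at the four vertices $p, q, m, m'$, and among these only those lying on $\partial S$ have their defect affected; moreover $p$ and $q$ are \emph{inner} vertices of $\gamma_u$ by hypothesis, hence in particular $p, q \notin \{a, b, c\}$. Thus the corners $a$ and $b$, being the endpoints of $\gamma_u$, are untouched by any edge-swap along $\gamma_u$: their incident-triangle count is identical in $S$ and $S'$, so $\defect_S(a) = \defect_{S'}(a)$ and $\defect_S(b) = \defect_{S'}(b)$. This gives the first assertion directly. For the third corner $c$ (the one opposite $\gamma_u$), there is a subtlety: $c$ could coincide with $m$ or $m'$, so its defect may genuinely change. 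What does \emph{not} change, however, is whether $\defect(c) = 2$: a corner has defect $2$ exactly when it is contained in a single triangle of $S$ (recall $\defect(c) = 3 - |\{\text{triangles containing } c\}|$ and a corner lies in at least one triangle by \cref{prop:notasimplex} once $S$ is nondegenerate, while $\defect(c) \le 2$ by \cref{lemma:defects11}). If $c$ lies in only one triangle, that triangle is a corner triangle whose two boundary edges are the initial edges of $\gamma_v$ and $\gamma_w$; an edge-swap along $\gamma_u$ operates on two inner vertices of $\gamma_u$, which cannot both be vertices of this corner triangle, so the swap leaves $c$'s unique triangle in place and does not create a second one. Conversely, if $\defect_{S'}(c) = 2$ then running the swap backwards (an edge-swap is reversible) shows $\defect_S(c) = 2$ by the same argument. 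Hence $c$ has defect $2$ in $S$ if and only if it does in $S'$.

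The main obstacle is the bookkeeping around the case $c \in \{m, m'\}$: one must be sure that even when the opposite corner participates in the swapped square, the ``in exactly one triangle'' condition is genuinely preserved in both directions, which is why I phrase the argument symmetrically using reversibility of edge-swaps rather than tracking triangle counts numerically. Everything else is an immediate consequence of the locality of an edge-swap and the observation that $p, q$ are interior to $\gamma_u$, hence distinct from all three corners. Finally, to pass from a single swap to a general swap-surface $S'$ along $\gamma_u$, I iterate: each intermediate surface is again a valid surface with the same side $\gamma_u$ (its endpoints $a, b$ unchanged) by \cref{lemma:swaps}, so the equalities $\defect(a) = \defect(a)$, $\defect(b) = \defect(b)$ and the ``defect $2$ at $c$'' equivalence compose through the sequence, completing the proof.
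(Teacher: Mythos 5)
Your proposal is correct and follows essentially the same route as the paper's (much terser) proof: the swap only affects the four vertices of the swapped square, of which $p,q$ are inner vertices of $\gamma_u$ and $m,m'$ lie off $\gamma_u$, so $a,b$ are untouched, while the corner $c$ can only participate as $m$ or $m'$, in which case it necessarily meets at least two triangles and so never has defect $2$ on either side of the swap. Your more careful handling of the case $c\in\{m,m'\}$ and of the composition of swaps just fills in details the paper leaves implicit.
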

\begin{proof}
	Every edge-swap obtained by Lemma~\ref{lemma:swaps} changes the defects of the vertices of the swapped edges if they are on the boundary of $S$. By construction these vertices are not the corners associated with $\gamma_u$.  In particular the  defects of the corners incident to $\gamma_u$ do not change.
	If one of them correspond to the remaining corner not incident to $\gamma_u$, this corner has defect at most $1$.  	
\end{proof}

\begin{lemma}[Preserving defects]\label{lem:inner def1-vertex}
	If $v$ is an inner vertex of $\gamma_v$ or $\gamma_w$ of defect $1$, then its defect stays the same under any edge-swap along $\gamma_u$  if $v$ is not adjacent to a vertex $w$ of $\gamma_u$ with the following properties: 
	\begin{enumerate}
		\item $w$ has defect $0$ 
		\item $w$ is a neighbor of one of the corners incident to $\gamma_u$ and this corner has defect $2$.
	\end{enumerate}  
\end{lemma}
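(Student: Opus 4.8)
The plan is to track how a single edge-swap along $\gamma_u$ affects the defect of a fixed inner vertex $v$ of defect $1$ lying on one of the other two sides $\gamma_v$ or $\gamma_w$, and to show that the only way this defect can change is if $v$ is adjacent to a vertex $w$ of $\gamma_u$ satisfying the two listed properties. Since a swap-surface is a composition of edge-swaps, it suffices to analyze a single edge-swap, and then apply the argument inductively (noting that the intermediate surfaces still satisfy the running hypotheses).

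First I would recall from \cref{def:edgeswap} and \cref{lemma:swaps} exactly which triangles change: an edge-swap along $\gamma_u$ replaces the two triangles on $\{p,q,m\}$ and $\{q,m,m'\}$ by the two triangles on $\{p,m,m'\}$ and $\{p,m',q\}$, where $p,q$ are adjacent inner vertices of $\gamma_u$ and $m,m'\in S\setminus\gamma_u$. The set of triangles of $S$ that are created or destroyed is therefore contained in the four triangles $\{p,q,m\},\{q,m,m'\},\{p,m,m'\},\{p,m',q\}$, whose vertex set is $\{p,q,m,m'\}$. Consequently, the number of triangles containing $v$ — and hence $\defect(v)$ — can only change if $v\in\{p,q,m,m'\}$. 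Now $v$ is an inner vertex of $\gamma_v$ or $\gamma_w$, while $p,q$ are inner vertices of $\gamma_u$; these are disjoint (an inner vertex of one side is neither a corner nor an interior vertex of another side), so $v\notin\{p,q\}$. Also $m,m'$ are vertices of $S$ not on $\gamma_u$, but they need not be on the boundary at all; if $v\in\{m,m'\}$ then in particular $v$ is adjacent to both $p$ and $q$ (it lies on the relevant $4$-cycle), and hence $v$ is adjacent to a vertex of $\gamma_u$.

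So it remains to pin down, in the case $v\in\{m,m'\}$, which vertex $w$ of $\gamma_u$ adjacent to $v$ must satisfy the two stated properties, or else conclude that $\defect(v)$ does not actually change. The key point is that when the defect of $v$ does change, $v$ gains or loses exactly one triangle, so $v$ is a vertex of exactly one of the two destroyed triangles or of exactly one of the two created triangles but not its ``partner.'' I would examine the two subcases $v=m$ and $v=m'$ using the precise list of four triangles above: if $v=m'$ then $v$ lies in both created triangles and in one destroyed triangle (namely $\{q,m,m'\}$), so its triangle count increases by one; if $v=m$ then $v$ lies in both destroyed triangles and one created triangle ($\{p,m,m'\}$), so its count decreases by one. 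In either case I would identify the relevant neighbor $w\in\{p,q\}\subset\gamma_u$, observe $w$ has defect $0$ (this is the hypothesis of \cref{lemma:swaps}: $q$ has defect $0$, and after the swap the roles shift so that the newly-relevant boundary vertex again has defect $0$; more precisely the vertex of $\gamma_u$ whose incident configuration is being altered adjacent to $v$ is the defect-$0$ endpoint), and that $w$ being a neighbor of a corner of $\gamma_u$ forces that corner to have defect $2$ for the swap to reach this far along $\gamma_u$. Chaining these observations gives exactly properties (1) and (2), so if $v$ is adjacent to no such $w$, then $v\notin\{m,m'\}$ for any edge-swap in the sequence, and $\defect(v)$ is unchanged throughout.

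The main obstacle I anticipate is the bookkeeping in the last paragraph: the statement quantifies over an entire sequence of edge-swaps along $\gamma_u$, and the ``relevant'' vertices $p,q,m,m'$ move along $\gamma_u$ as the swaps propagate (as in the proof of \cref{lemma:shift}). One must argue that if $v$ is not adjacent to a defect-$0$ neighbor of a defect-$2$ corner of $\gamma_u$ at the start, then it never becomes adjacent to such a vertex in any intermediate swap-surface either — which follows because edge-swaps along $\gamma_u$ do not change the $1$-skeleton restricted to $\gamma_v\cup\gamma_w$, do not change the defects of the corners incident to $\gamma_u$ (\cref{lem:swap_corner}), and do not create new edges between $v$ and vertices of $\gamma_u$ other than by exactly the $4$-cycle mechanism just analyzed. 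I would phrase the induction carefully around these invariants; once that is set up, the per-swap analysis above closes the argument.
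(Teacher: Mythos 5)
Your reduction is sound as far as it goes: only the four triangles on $\{p,q,m,m'\}$ are created or destroyed, $v$ cannot be $p$ or $q$, and the triangle-count bookkeeping for $v=m$ (count drops by one, defect rises) and $v=m'$ (count rises by one, defect drops) is correct. This matches the skeleton of the paper's argument. But the conclusion of the lemma is not merely that $v$ is adjacent to \emph{some} vertex of $\gamma_u$; it is that $v$ is adjacent to a vertex $w$ of $\gamma_u$ satisfying \emph{both} listed properties, and that is exactly the part your proposal does not prove. Your justification for property (2) --- that ``$w$ being a neighbor of a corner of $\gamma_u$ forces that corner to have defect $2$ for the swap to reach this far along $\gamma_u$'' --- is not an argument: nothing about how far a swap has propagated determines the defect of a corner, and you have not even shown that $w$ \emph{is} a neighbor of a corner. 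Likewise, in the case $v=m$ the vertex of $\gamma_u$ adjacent to $v$ whose configuration changes is $q$ (defect $0$), but there is no reason visible in your write-up why $q$ should neighbor a defect-$2$ corner, so as written the lemma could fail in that case.

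The missing ingredient is to exploit the hypothesis $\defect(v)=1$ combinatorially: a defect-$1$ inner boundary vertex lies in exactly two triangles, hence has exactly three neighbors in $S$, two of which are its neighbors on its own side $\gamma_v$ or $\gamma_w$. If $v=m$, then $v$ is adjacent to $p$, $q$ and $m'$, and since the inner vertices $p,q$ of $\gamma_u$ cannot lie on $v$'s side, $v$ would have at least four neighbors --- so the case $v=m$ simply cannot occur and must be excluded, not just tabulated. If $v=m'$, then (since $q$ has defect $0$ its link in $S$ is the fan $p,m,m',d$ with $d$ the other boundary neighbor of $q$ on $\gamma_u$) the three neighbors of $v$ are $q$, $m$ and $d$; as $q$ is not on $v$'s side, the two side-neighbors of $v$ must be $m$ and $d$, forcing $d$ to be a corner incident to $\gamma_u$. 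That corner then lies in the single triangle $\{q,m',d\}$, so it has defect $2$, and $w=q$ is the desired vertex: it has defect $0$, is adjacent to $v$, and neighbors the defect-$2$ corner $d$. (Note in particular that the relevant $w$ is $q$, not the endpoint $p$ of the newly created edge $(p,m')$.) Without this step your proof establishes only a weaker statement than the lemma, and the weaker statement is not what is used in \cref{lem:def1-corner_commuting} and \cref{prop: sum_of_defects>=0}. Your concern about sequences of swaps is reasonable but secondary; the statement (and the paper's proof) concerns a single edge-swap.
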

\begin{proof}
	Let $v$ be an inner vertex of defect $1$ not contained in $\gamma_u$ such that its defect changes by an edge-swap along $\gamma_u$. Then the defect of $v$ is contained in one of the swapped edges. Let $w$ be the second vertex of this edge. By definition it is contained in $\gamma_u$ and has defect $1$ or $0$. It has not defect $1$ as otherwise $w$ would be a corner of defect $1$ and would not be contained in an edge of the swap. Hence $w$ has defect $0$. Then $w$ is incident to exactly three  2-simplices, the vertex  $v$ is incident to exactly two $2$-simplices and $v$ and $w$ are adjacent. 
	Then $w$ is adjacent to a corner incident to $\gamma_u$ having defect $2$. 
\end{proof}

%
%

\begin{lemma}[Noncommuting involutions]\label{lem:def1-corner_commuting}
	If $\defect(c)=1$ and both vertices on the boundary of $S$ closest to $c$ with nonzero defect have defect $1$, then the involutions corresponding to $c$ do not commute. In particular, if a corner and its incident sides have defect $1$ the corresponding involutions do not commute. 
\end{lemma}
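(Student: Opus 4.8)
The plan is to argue by contradiction. Suppose the two involutions associated with the corner $c$ commute. Among the standard generators only $r$ and $s$ commute — $(rs)^2=1$, while $(st)^j$ and $(rt)^5$ have order at least $3$ — so the commuting pair must be $\{r,s\}$. Hence $c$ is the corner called $x$ in \cref{notation:triangle-groups}, its two incident sides are $\gamma_r\subset X_r$ and $\gamma_s\subset X_s$, and recall that in this situation $x^{\langle r,s\rangle}$ consists of four vertices spanning a simplex, so in particular $x^r\sim x^s$. We aim to contradict $\defect(c)=\defect(x)=1$ by invoking \cref{lemma:commute1}.

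The first step is to normalize $S$ so that the neighbors $x_r$ and $x_s$ of $x$ on $\gamma_r$ and $\gamma_s$ both have defect $1$. By hypothesis the inner vertex of $\gamma_s$ closest to $x$ with nonzero defect has defect $1$ (the borderline case in which this vertex is actually the opposite corner is treated separately below), so \cref{lemma:shift} lets us pass to a swap-surface of $S$ along $\gamma_s$ in which $\defect(x_s)=1$; applying \cref{lemma:shift} once more, now to $\gamma_r$, we reach a further swap-surface in which also $\defect(x_r)=1$. This is legitimate for three reasons: by \cref{lem:swap_corner} none of these swaps changes $\defect(x)$; a swap-surface of a minimal surface is again minimal with the same boundary, so the normalized surface still satisfies the hypotheses of \cref{sec:minimalSurface} and $\gamma_r,\gamma_s$ still carry inner vertices; and a single edge-swap cannot remove a boundary edge, so its two swapped edges have all endpoints among the inner vertices of the side it runs along and the \emph{interior} vertices of $S$ — in particular they avoid the corners, and since $\gamma_r$ and $\gamma_s$ meet only at $x$, a swap along $\gamma_s$ leaves the defects of all vertices of $\gamma_r$ unchanged, and conversely. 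Thus the two normalizations do not interfere and we arrive at a minimal surface with $\defect(x_r)=\defect(x_s)=1$ and $\defect(x)=1$.

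Now \cref{lemma:commute1} applies to the normalized surface and yields $\defect(x)\le 0$, contradicting $\defect(x)=1$. Hence the involutions associated with $c$ cannot commute, which is the first assertion. For the ``in particular'' clause, assume that a corner $c$ and its incident sides $\gamma_v,\gamma_w$ all have defect $1$. A side of defect $1$ has an inner vertex, because a side without inner vertices has defect $0$ by definition; and by the last item of \cref{lem:counting} the inner vertex of $\gamma_v$ closest to $c$ with nonzero defect has defect $1$, and likewise for $\gamma_w$. So the hypotheses of the lemma are satisfied and the conclusion follows.

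The main obstacle is the bookkeeping around the borderline configurations, namely those in which one of the incident sides has no inner vertex of nonzero defect, so that the ``closest nonzero-defect vertex'' named in the hypothesis is the opposite corner (of defect $1$) rather than an inner vertex. In that case \cref{lemma:commute1} is not directly available, since it requires both incident sides to carry inner vertices of prescribed defect; one instead runs a separate case analysis at $x$ using \cref{lemma:2}, \cref{lemma:4}, and \cref{lemma:help} or \cref{lemma:commute3}. In the most degenerate subcase, where $\gamma_s$ is a single edge, the expectation is that tracking the two triangles at $x$ and at the adjacent corner forces either a $4$- or $5$-cycle without a diagonal (contradicting $6$-largeness) or the coincidence of two of the three corners, i.e.\ $X_r\cap X_s\cap X_t\neq\emptyset$, which is excluded under the standing assumption.
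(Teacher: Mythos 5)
Your overall strategy is exactly the paper's: use \cref{lemma:shift} to push a defect-$1$ vertex to each of the two neighbours of $c$ on its incident sides, check that the two normalizations and the defect of $c$ survive, and then contradict \cref{lemma:commute1}. However, one step of your justification is wrong as stated. You claim that the two swapped edges of an edge-swap along $\gamma_s$ ``have all endpoints among the inner vertices of the side it runs along and the \emph{interior} vertices of $S$,'' and deduce that a swap along one incident side cannot change defects on the other. That is false: in \cref{def:edgeswap} the vertices $m,m'$ are merely required to lie in $S\setminus\gamma_u$, and they can perfectly well lie on $\gamma_v$ or $\gamma_w$ --- this is precisely the situation accounted for in \cref{lem:swap_othersides}, where a swap along $\gamma_u$ changes the defect of boundary vertices of the other sides by $\pm 1$. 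So your second round of swaps (along $\gamma_r$) could a priori destroy the property $\defect(x_s)=1$ that you just arranged. The paper closes this gap with \cref{lem:inner def1-vertex}: a defect-$1$ inner vertex off the swapped side keeps its defect unless it is adjacent to a defect-$0$ neighbour of a defect-$2$ corner of that side, and here the relevant corner is $c$ itself, which has defect $1$. You should replace your ``the swaps do not interfere'' argument by an appeal to that lemma.

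A second, smaller point: you explicitly flag the ``borderline'' configurations in which the closest nonzero-defect vertex is the opposite corner, but you only state an ``expectation'' of how they would be handled, which is not a proof. (The paper silently assumes these do not occur; in every application of the lemma the hypothesis is derived from \cref{lem:counting}\ref{item_7} applied to a side of defect $1$, so the vertex in question is always an inner vertex of the incident side and \cref{lemma:shift} applies.) Either restrict the hypothesis to that reading, or actually carry out the case analysis you sketch --- as written, that part of your argument is incomplete.
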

\begin{proof}
	Let $\gamma_w$ and $\gamma_v$ denote the sides incident with $c$. Then \cref{lemma:shift} implies that there exists a surface $S'$ obtained from $S$ via an edge-swap along $\gamma_w$ in which $\defect(c_w)=1$. By \cref{lem:swap_corner} the defect of $c$ does not change. We apply \cref{lemma:shift} again to $\gamma_v$ and obtain a surface $\hat S$ in which $c_v$ has defect $1$ by applying an edge-swap to $S'$ along $\gamma_v$. Using \cref{lem:swap_corner,lem:inner def1-vertex} the defects of $c$ and $c_w$ do not change. The fact that the involutions do not commute is then obtained from \cref{lemma:commute1}.
\end{proof}

\subfiglabelskip=0pt
\begin{figure}
	\centering
	\subfigure[][]{\label{commute3_SubfigA}\includegraphics[width=0.28\textwidth]{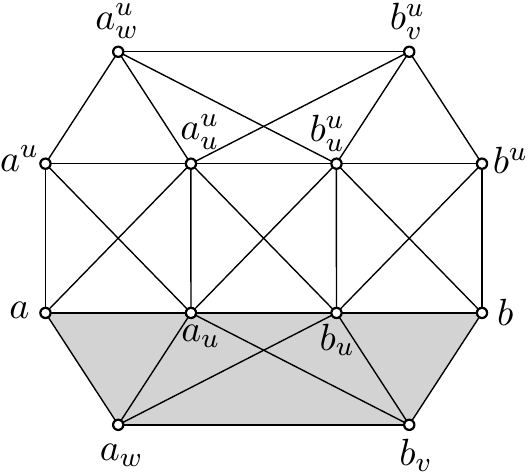}}
	\subfigure[][]{\label{commute3_SubfigB}\includegraphics[width=0.28\textwidth]{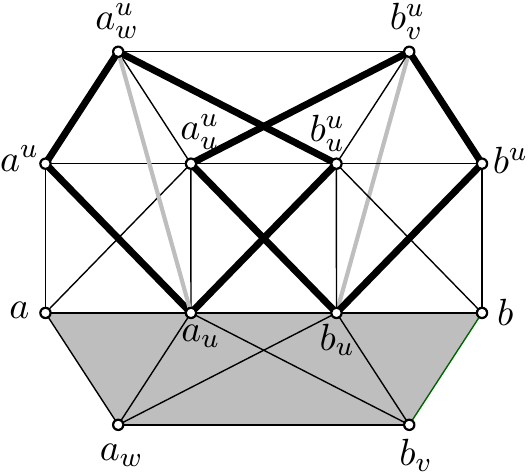}}	
	\subfigure[][]{\label{commute3_SubfigC}\includegraphics[width=0.28\textwidth]{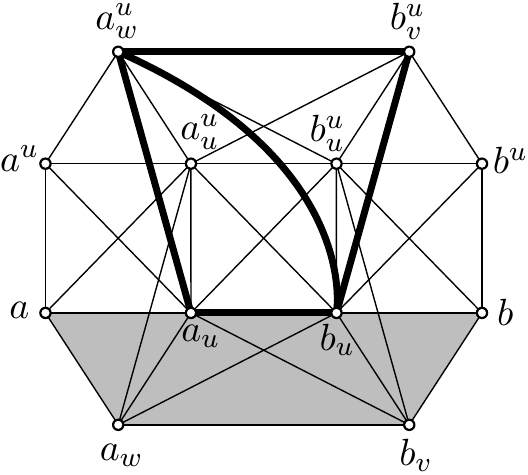}}	
	\subfigure[][]{\label{commute3_SubfigD}\includegraphics[width=0.28\textwidth]{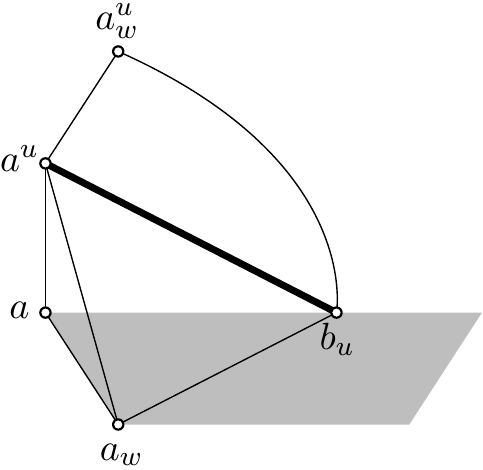}}	
	\subfigure[][]{\label{commute3_SubfigE}\includegraphics[width=0.28\textwidth]{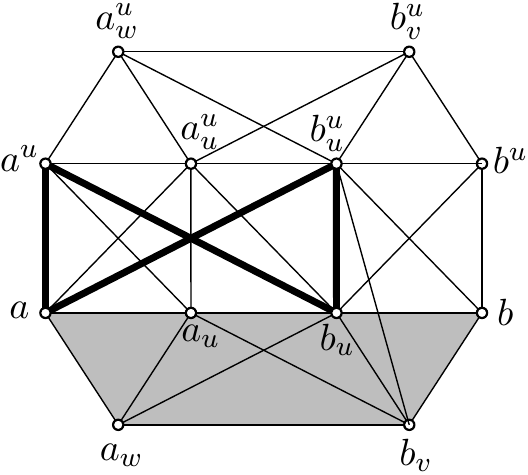}}	
	\caption{This illustrates the proof of Lemma~\ref{lem:def1_corner}. The last picture on the bottom shows the constructed $4$-cycle which does not have a diagonal.}
	\label{fig:defect_1_side}
\end{figure}

\begin{lemma}[Bounding defects of vertices]	\label{lem:def1_corner}
	Suppose that $\defect(\gamma_u)=1$ and that $\gamma_u$ has exactly $2$ inner vertices. Then $\min(\defect(a), \defect(b))\leq 1$. 
\end{lemma}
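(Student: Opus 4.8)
The plan is to argue by contradiction: suppose both corners $a$ and $b$ incident to $\gamma_u$ have defect $2$. Since $\defect(\gamma_u)=1$ and $\gamma_u$ has exactly two inner vertices, by Lemma~\ref{lem_sumdef}(2) one of the two inner vertices has defect $1$ and the other has defect $0$; write $p$ for the defect-$1$ vertex and $q$ for the defect-$0$ vertex. Using \cref{lemma:shift} twice (first to push the defect-$1$ vertex next to corner $a$, then — after relabelling — next to corner $b$), and keeping track of corner defects via \cref{lem:swap_corner}, I would reduce to the situation where the inner vertex adjacent to $a$ has defect $1$; but then the inner vertex adjacent to $b$ has defect $0$, and \cref{lemma:defectes10} applied at the corner $b$ (with the side $\gamma_u$ playing the role of $\gamma_w$ in that lemma — note this needs the \emph{other} side at $b$ to also have an inner vertex, which I will have to check or handle separately) forces $\defect(b)\le 1$, a contradiction. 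The symmetric argument at $a$ handles the other placement of the defect-$1$ vertex.

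The cleaner way to organize this, and the way I would actually write it, is: assume $\defect(a)=\defect(b)=2$, and consider directly the local picture near $\gamma_u$ in $S$. With $p$ adjacent to $a$ having defect $1$ and $q$ adjacent to $b$ having defect $0$ (after a shift via \cref{lemma:shift} if necessary), Lemma~\ref{lemma:2} tells us the boundary path $(a^u, p^u, q, \dots)$ and its image under $u$ bound a strip of triangles whose geometry is completely determined by the defect data. Since $\defect(a)=2$, the two neighbors of $a$ on the boundary of $S$ (namely $p$ on $\gamma_u$ and $a_w$ on the other side) are connected by an edge; since $\defect(p)=1$, $p$ has a unique interior neighbor $m$, and $m\sim a_w$ as well. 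Tracing this forward along $\gamma_u$ using that $q$ has defect $0$ (so $q$ lies in three triangles) and then reaching the corner $b$ of defect $2$, one extracts a short cycle — this is the $4$-cycle shown in the last panel of Figure~\ref{fig:defect_1_side} — whose potential diagonals are all excluded: one diagonal is forbidden because the relevant side is a geodesic, and the other is forbidden by \cref{lemma:4} (applied at $p$) together with minimality of the area of $S$ (which rules out an edge of the form $e\sim e^u$). The resulting diagonal-free $4$-cycle contradicts $6$-largeness.

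The main obstacle I anticipate is bookkeeping rather than a conceptual gap: I need to make sure the edge-swaps used to normalize the position of the defect-$1$ vertex do not destroy the hypothesis $\defect(a)=\defect(b)=2$ (this is exactly what \cref{lem:swap_corner} guarantees, since $a$ and $b$ are the corners incident to $\gamma_u$) and do not accidentally change $\defect(\gamma_u)$ away from $1$ (guaranteed by \cref{lem:swap_othersides}(1)). A secondary subtlety is the degenerate possibility that the other side incident to one of $a$ or $b$ has \emph{no} inner vertex, in which case the neighbor of that corner on that side is another corner of $S$; here one invokes \cref{lemma:help} in place of \cref{lemma:defectes10}, or handles the small configuration by hand, and the same $6$-largeness contradiction is reached. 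Finally, because a corner of defect $2$ can have an edge-swap along $\gamma_u$ that is simultaneously a swap along its other incident side, I would be careful that the shift procedure is stated so that it only manipulates $\gamma_u$; \cref{lemma:shift} is already phrased this way, so this is a matter of citing it correctly. Once the local combinatorics is pinned down, the contradiction with $6$-largeness is immediate, so $\min(\defect(a),\defect(b))\le 1$.
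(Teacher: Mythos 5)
Your opening moves match the paper's: assume $\defect(a)=\defect(b)=2$, observe that the two inner vertices of $\gamma_u$ must carry defects $1$ and $0$, and use a swap (the paper invokes \cref{lemma:swaps} once, rather than \cref{lemma:shift}) to normalize their positions. But both of your routes to the contradiction have gaps. The first route cannot work: \cref{lemma:defectes10} applied at the corner $b$ needs the neighbor of $b$ on $\gamma_u$ to have defect $0$ \emph{and} the neighbor of $b$ on the other incident side $\gamma_v$ to have defect $1$. You only control the defects of the inner vertices of $\gamma_u$ and have no information whatsoever about $\defect(b_v)$; all the corner lemmas of \cref{sec:def-corner} (\cref{lemma:defects11,lemma:defectes10,lemma:commute1,lemma:commute3}) require defect data on \emph{both} sides meeting at the corner, so none of them applies here.

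The second route is closer to the actual argument, but the decisive step is asserted rather than carried out. Two problems. First, the local picture is off: with $\defect(a)=2$ and $\defect(p)=1$ the single triangle at $a$ is $(a,p,a_w)$ and the fan at $p$ is $(a,p,a_w),(a_w,p,q)$, so the unique neighbor of $p$ off $\gamma_u$ is $a_w$ itself, not a separate interior vertex $m$; moreover this strip alone yields no contradiction, since the surface can continue freely below $a_w$ and $b_v$. Second, and more importantly, ``one extracts a short cycle whose potential diagonals are all excluded'' is exactly where all the work lies, and none of it is done. The paper gets there by a chain of cycle arguments in $X$ (not in $S$) that uses the $u$-images of the boundary vertices essentially: the edges $(a_u,b_u^u)$ and $(b_u,a_u^u)$ that \cref{lemma:swaps} shows to lie in $X$; intermediate $4$-cycles through $a^u$ and $a_w^u$ producing the edges $(a_u,a_w^u)$ and $(b_u,a_w^u)$; the $5$-cycle $(a,a^u,a_w^u,b_u,a_w)$, whose diagonals $(a,b_u)$, $(a,a_w^u)$ and $(a_w,a_w^u)$ are excluded by the geodesic condition, minimality of $S$, and $6$-largeness, forcing $a^u\sim b_u$; and only then the final $4$-cycle $(a,a^u,b_u,b_u^u)$, whose diagonal would give $a\sim b_u$ against $d(a,b_u)=2$ on the geodesic $\gamma_u$. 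Your sketch mentions the $u$-images only in passing and never identifies the cycle or verifies that its diagonals fail, so as written the proposal is a plan rather than a proof.
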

\begin{proof}
	We assume for a contradiction that the defect along $\gamma_u$ is $1$, $\gamma_u$ has exactly two inner vertices and that both incident corners $a$ and $b$ have defect $2$. 
	Then $\gamma_u$ contains just the vertices $a_u$ and $b_u$. One of them, say $a_u$, has defect $0$ and the other vertex $b_u$ has defect $1$. 
	
	By \cref{lemma:swaps}, we can replace $S$ by a surface in which $\defect(a_u)=1$ and $\defect(b_u)=0$ by swapping the edges $(a_u,b_u^u)$ and $(b_u,a_u^u)$. In particular, both edges are contained in $X$. Hence we are in the situation as illustrated in Figure~\ref{fig:defect_1_side}\subref{commute3_SubfigA}. 
	Note that $X$ contains the two $4$-cycles $(a^u,a_u, b_u^u,a_w^u)$ and $(a^u,a_u, b_u^u,a_w^u)$ which are marked by thick edges in Figure~\ref{fig:defect_1_side}\subref{commute3_SubfigB} By $6$-largeness, they contain a diagonal each. The edges $(a^u, b_u^u)$ and $(a_u^u,b^u)$ are not contained in $X$ as otherwise $a \sim b_u$ or $a_u \sim b$ using the simplicial action and this then implies that $\gamma_u$ is not a geodesic. Hence $X$ contains the edges $(a_u,a_w^u)$ and $(b_u,a_w^u)$ which are drawn in gray in Figure~\ref{fig:defect_1_side}\subref{commute3_SubfigB}. We conclude that $X$ contains the $4$-cycle $(a_u,b_u,b_v^u,a_w^u)$ (as illustrated in  Figure~\ref{fig:defect_1_side}\subref{commute3_SubfigC}) which, by $6$-largeness, contains a diagonal. 
	For symmetrical reasons we may assume that the same $4$-cycle also has the diagonal $(a_w^u,b_u)$. But then $X$ contains the closed path $C:=(a,a^u,a_w^u,b_u,a_w)$ which is shown in Figure~\ref{fig:defect_1_side}\subref{commute3_SubfigD}. It is a $4$-cycle or $5$-cycle depending on whether $a = a^u$ or not. 
	This cycle does not contain the diagonal $(a,b_u)$ as otherwise $\gamma_u$ would not be a geodesic. The vertices $a_w$ and $a_w^u$ are not adjacent as otherwise $a_w$ would be contained in $X_w$ and $X_u$ and $S$ would not have minimal area. Hence edge $(a, a_w^u)$ is not contained in $X$ as otherwise $(a,a_w^u,b_u,a_w)$ would be a $4$-cycle without diagonals. In particular $a \ne a^u$. We conclude that $C$ is a $5$-cycle that does not contain the diagonals $(a,a_w^u)$, $(a,b_u)$ and $(a_w^u,a_w)$. By $6$-largeness $C$ contains the remaining two diagonals $(a^u,b_u)$ and $(a^u,a_w)$. The edge $(a^u, b_u)$ is shown in Figure~\ref{fig:defect_1_side}\subref{commute3_SubfigD}. We observe that $X$ contains the closed path $(a,a^u,b_u,b_u^u)$ which is pictured in Figure~\ref{fig:defect_1_side}\subref{commute3_SubfigE}. Hence, $a \sim b_u$. This contradicts the fact that $\gamma_u$ is a geodesic. 
\end{proof}

\begin{lemma}[Number of inner vertices]\label{lem:2-def2corners}
	If every side of $S$ contains at least one inner vertex and $\defect(a)=\defect(b)=2$, then  $\gamma_u$ contains at least three inner vertices. 
\end{lemma}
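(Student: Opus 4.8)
The plan is to argue by contradiction, assuming $\gamma_u$ has at most two inner vertices while $\defect(a)=\defect(b)=2$. Since every side contains at least one inner vertex by hypothesis, $\gamma_u$ has either one or two inner vertices, and I would split along these two cases.

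First I would dispose of the case where $\gamma_u$ has exactly one inner vertex, call it $p = a_u = b_u$. Since $a$ and $b$ both have defect $2$, the vertex $p$ is simultaneously adjacent to $a^u$ and to $b^u$ (via \cref{lemma:2}, which forces $a^u\sim a_u$ and $b^u \sim b_u$), and $p$ also has exactly one interior neighbor $m$. Counting the triangles around $p$: it lies in the two boundary triangles $(a,p,m)$ on one side and $(b,p,m)$ on the other (using that $a$ and $b$ have defect $2$, so $a$'s only interior neighbor and $b$'s only interior neighbor are forced to be $m$ and $p$ respectively), plus possibly more. I would then examine the cycle running through $a^u, a, p, b, b^u$ together with interior vertices and invoke \cref{lemma:4} (nonadjacency: $p$ has defect $1$ in this configuration after accounting, and its interior neighbor cannot be adjacent to $a^u$ or $b^u$) to produce a short cycle without a diagonal, contradicting $6$-largeness. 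The key point is that the minimality of the length of $\gamma_u$ forbids the diagonals along the base while \cref{lemma:4} forbids the ones through $m$, leaving no admissible diagonal.

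Next, the case where $\gamma_u$ has exactly two inner vertices, $a_u$ and $b_u$. Here I would first observe that $\defect(\gamma_u) = \defect(a_u) + \defect(b_u)$. By \cref{lem_sumdef}(1) each of these is at most $1$, and by \cref{lem_sumdef}(2) they cannot both be $1$ (as there would then need to be a negative-defect vertex strictly between them, but there is none). So $\defect(\gamma_u) \le 1$. If $\defect(\gamma_u) = 1$, then one of $a_u, b_u$ has defect $1$ and the other has defect $0$, and \cref{lem:def1_corner} applies directly to give $\min(\defect(a),\defect(b))\le 1$, contradicting $\defect(a) = \defect(b) = 2$. So the remaining subcase is $\defect(\gamma_u)\le 0$, i.e.\ at least one of $a_u, b_u$ has negative defect — say without loss of generality $\defect(a_u) \le 0$. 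Then I would apply \cref{lemma:defectes10} or \cref{lemma:help}: if $\defect(a_u) = 0$ one needs a slightly different argument, but in the configuration where $a$ has defect $2$ and $a_u$ has defect $\le 0$, $a$'s unique interior neighbor $d$ must be adjacent to $b_u$ as well, and one chases the resulting cycle $(a^u, a, d, b_u, \dots)$ or invokes \cref{lemma:help} (with the roles set up so that $a_u^w \sim a_w$ fails or the relevant adjacency hypothesis is met) to force $\defect(a)\le 1$. Symmetrically if $\defect(b_u)\le 0$ one gets $\defect(b)\le 1$. Either way we contradict the standing assumption.

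The main obstacle I expect is the bookkeeping in the single-inner-vertex case: when $a$ and $b$ both have defect $2$, the surface $S$ is very thin near $\gamma_u$, and one has to correctly identify which vertices coincide (the interior neighbor of $a$, of $b$, and of $p$ may all be the same vertex, or $p$ may have a longer interior fan) and then produce a concrete short cycle lacking a diagonal. The cleanest route is probably to first show $S$ is not just a few simplices wide there by comparing with \cref{prop:notasimplex}, and then to set up the cycle through $a^u, b^u$ and the shared interior vertex carefully, applying \cref{lemma:4} to both ends. The two-inner-vertex case is more routine, reducing to already-proved lemmas (\cref{lem:def1_corner}, \cref{lemma:defectes10}, \cref{lem_sumdef}), so the real work is the $|{\gamma_u^{\mathrm{inner}}}| = 1$ analysis.
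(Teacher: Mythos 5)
Your overall skeleton (splitting into one versus two inner vertices, and using \cref{lem:def1_corner} when $\defect(\gamma_u)=1$) is the right one, and your two-inner-vertex subcase with $\defect(\gamma_u)=1$ is exactly the paper's argument. But there are two genuine problems. First, you misread what $\defect(a)=2$ means structurally: a boundary corner of defect $2$ lies in \emph{exactly one} triangle of $S$, namely $(a,a_u,a_w)$, so $a$ has \emph{no} interior neighbour --- the third vertex of its unique triangle is $a_w$, a boundary vertex on $\gamma_w$. This derails your one-inner-vertex analysis. The paper's argument there is purely combinatorial and much shorter: the unique triangles at $a$ and at $b$ both contain $p$, their third vertices $a_w$ and $b_v$ are distinct (they are inner vertices of the two \emph{other} sides, which exist by hypothesis, and the boundary cycle has no repeated vertices), so these two triangles cannot be glued along an edge through $p$; hence the link of $p$ in $S$ is a path from $a$ to $b$ with at least four vertices, $p$ lies in at least three triangles, and $\defect(p)\le 0$, which is the desired contradiction in the defect-one regime where the lemma is used. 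No appeal to $a^u$, $b^u$, \cref{lemma:4} or $6$-largeness is needed, and the diagonal-free cycle you propose to exhibit is never pinned down, so that part of your plan cannot be checked as written.

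Second, your two-inner-vertex subcase with $\defect(\gamma_u)\le 0$ is a gap. \cref{lemma:defectes10} concerns the pair of neighbours $a_u\in\gamma_u$ and $a_w\in\gamma_w$ of $a$ on its two \emph{different} incident sides, with defects exactly $1$ and $0$ respectively; it says nothing about the situation where $a_u$ alone has defect $\le 0$. \cref{lemma:help} requires that $a^{\langle u,w\rangle}$ span a simplex together with specific adjacency hypotheses, none of which you establish. And your claim that ``$a$'s unique interior neighbour $d$ must be adjacent to $b_u$'' again presupposes an interior neighbour that does not exist when $\defect(a)=2$. For comparison, the paper disposes of the two-inner-vertex case solely via \cref{lem:def1_corner}; the lemma is only ever invoked on a side of defect $1$, which is precisely the regime your first subcase already finishes, so the extra machinery you reach for in the $\defect(\gamma_u)\le 0$ subcase is both unsupported and not the intended route.
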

\begin{proof}
	Suppose for a contradiction that $\gamma_u$ contains exactly two inner vertices. The defects of  $a$ and $b$ are at most $1$ by \cref{lem:def1_corner}. If $\gamma_u$ contains exactly one inner vertex $x$ we obtain that $x\sim a$ and $x\sim b$. As $\defect(a)=\defect(b)=2$ both $a$ and $b$ are contained in a single $2$-simplex each both of which contain $x$. As every side of $S$ contains at least one inner vertex these two $2$-simplices then can not be glued together along an edge. Hence $x$ cannot be incident to at least three $2$-simplices and $x$ cannot habe defect $1$ which contradicts what we have observed earlier.  
\end{proof}

The following lemma gives conditions for when one can shift two vertices of defect $1$ to the ends of the considered side via edge-swaps.

\begin{lemma}[Existence of swap surfaces]\label{lemma:defects_of_b}
	Suppose that $\defect(\gamma_u)=1$ in $S$ and that $\gamma_u$ contains at least $3$ inner vertices. Then there exists a swap surface $S'$ of $S$ along $\gamma_u$ such that $\defect(a_u)=\defect(b_u)=1$. Moreover, $\defect(\gamma_u)=1$ in $S'$.
\end{lemma}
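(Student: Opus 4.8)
The plan is to exploit \cref{lem_sumdef} together with the two counting items \ref{item_7} and \ref{item_2} of \cref{lem:counting} in order to move the two vertices of defect $1$ outward, one from each end of $\gamma_u$, by repeated application of \cref{lemma:shift}. Since $\defect(\gamma_u)=1$ and a geodesic in the boundary has all inner vertices of defect $\leq 1$ by \cref{lem_sumdef}(1), and any two inner vertices of defect $1$ are separated by one of negative defect by \cref{lem_sumdef}(2), a counting argument (this is exactly item~\ref{item_7} of \cref{lem:counting}) shows that the inner vertex of $\gamma_u$ closest to $a$ with nonzero defect has defect $1$, and likewise for the end near $b$.

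First I would apply \cref{lemma:shift} at the $a$-end: this yields a swap surface $S_1$ of $S$ along $\gamma_u$ in which $a_u$ has defect $1$, and by \cref{lem:swap_othersides}(1) the defect along $\gamma_u$ is unchanged, so $\defect(\gamma_u)=1$ in $S_1$. Now I would argue that in $S_1$ the vertex of $\gamma_u$ closest to $b$ with nonzero defect still has defect $1$: indeed $\gamma_u$ still has at least $3$ inner vertices and $\defect(\gamma_u)=1$, so item~\ref{item_7} of \cref{lem:counting} applies again, now to the $b$-end. (Note that the vertex $a_u$ of defect $1$ sits at the $a$-end and, since $\gamma_u$ has at least three inner vertices and defect sum $1$, there must be a negative-defect vertex between $a_u$ and any further defect-$1$ vertex, so the outward shift toward $b$ does not disturb $a_u$ — this is where the hypothesis of at least three inner vertices is used.) Applying \cref{lemma:shift} to the $b$-end of $\gamma_u$ in $S_1$ produces a swap surface $S'$ along $\gamma_u$ in which $b_u$ has defect $1$, again with $\defect(\gamma_u)=1$ by \cref{lem:swap_othersides}(1).

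It then remains to check that in the second shift the defect of $a_u$ was not destroyed. The edge-swaps performed in the $b$-end application of \cref{lemma:shift} all take place strictly between the $b$-end and the first negative-defect vertex of $\gamma_u$, which lies strictly between $a_u$ and $b_u$; hence $a_u$ is never one of the four vertices of any swapped $4$-cycle and its incidence count with $2$-simplices is unchanged, so $\defect(a_u)=1$ persists in $S'$. Composing the two sequences of edge-swaps (all along the same side $\gamma_u$) realizes $S'$ as a swap surface of $S$ along $\gamma_u$, and by construction $\defect(a_u)=\defect(b_u)=1$ and $\defect(\gamma_u)=1$ in $S'$, which is the assertion.

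The main obstacle is the bookkeeping in the last paragraph: one must be careful that the swaps used to fix the $b$-end genuinely stay away from $a_u$. This is guaranteed precisely because $\gamma_u$ has at least three inner vertices and total defect $1$, forcing a negative-defect vertex strictly between the two outermost defect-$1$ vertices, but making this rigorous requires tracking which $4$-cycles \cref{lemma:swaps} operates on during the inductive shift inside \cref{lemma:shift}. Everything else is a routine combination of \cref{lemma:shift}, \cref{lem:swap_othersides}, and the counting in \cref{lem:counting}.
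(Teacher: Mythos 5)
Your argument is essentially the paper's own proof: locate the nonzero-defect vertex nearest each end via \cref{lem:counting}(\ref{item_7}), apply \cref{lemma:shift} once toward $a$ and once toward $b$, use \cref{lem:swap_othersides} to see that $\defect(\gamma_u)=1$ is preserved, and check that the second round of swaps does not touch $a_u$. If anything you are more explicit than the paper about the last bookkeeping point (the paper only remarks that the first round of swaps is not incident to $b_u$ and then says "repeat"), so the proposal matches the intended proof.
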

\begin{proof}
	Let $a_u'$ be the vertex closest to $a$ on $\gamma_u$ that is not $0$. As the defect along $\gamma_u$ is $1$, $a_u'$ has defect $1$ by \cref{lem:counting} (\ref{item_7}). Hence we can apply an edge-swap and exchange the surface $S$ for a surface $S'$ via \cref{lemma:shift}. As $\gamma_u$ has at least $2$ inner vertices, none of the swapped edges is incident to $b_u$. Hence we can repeat the argument for the other corner of $\gamma_u$ and obtain the desired swap-surface. 
\end{proof}

Using these lemmas we are able to prove the following proposition.

\begin{prop}[Nonnegative defect on sides]\label{prop: sum_of_defects>=0}
	Given a nondegenerate minimal surface $S$ as constructed in \cref{sec:minimalSurface} there exists a surface $S'$ with the same minimality properties such that the defect along any side of $S'$ is $1$ or $0$.	
\end{prop}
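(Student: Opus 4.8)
The plan is to control the defects along the three sides of $S$ by repeatedly applying edge-swaps until every side has nonnegative defect, while keeping track of how the defects on the other sides and at the corners move. Recall from \cref{lem_sumdef} that every side of a systolic disc has $\defect(\gamma_u)\leq 1$, so the only sides we need to deal with are those with $\defect(\gamma_u)\leq -1$, i.e. those with $\defect(\gamma_u)=-1$ or $\leq -2$. By the combinatorial Gauss--Bonnet theorem (\cref{prop:GaussBonnet}) the sum of the defects over all boundary vertices of $S$ is at least $6$; writing this sum as $\defect(\gamma_r)+\defect(\gamma_s)+\defect(\gamma_t)+\defect(x)+\defect(y)+\defect(z)$ and using that each corner has defect at most $2$ (\cref{lemma:defects11}(1)), at most two sides can have negative defect, and the total negative defect available is bounded. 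This numeric bound is what makes the induction terminate.

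The strategy I would carry out is an induction on the pair (number of sides with negative defect, total deficit $\sum_u \max(0,-\defect(\gamma_u))$) under an appropriate order, or more simply on the area of $S$ among all repeated swap-surfaces realizing the minimal circumference. Suppose some side, say $\gamma_u$, has $\defect(\gamma_u)\leq -1$. Then Gauss--Bonnet forces the corners $a,b$ incident to $\gamma_u$ together with the remaining side(s) to carry a lot of positive defect; in particular at least one other side $\gamma_v$ or $\gamma_w$ must have defect $1$ (using $\defect(a),\defect(b)\leq 2$ and $\defect(c)\leq 2$ and that the other two sides sum to at least $6-2-2-2-\defect(\gamma_u)\geq 1-\defect(\gamma_u)\geq 2$). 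I would then use \cref{lemma:defects_of_b} (when that side has at least three inner vertices) or the ad hoc lemmas \cref{lem:def1_corner}, \cref{lem:2-def2corners}, \cref{lemma:commute1,lemma:commute3} (in the low-inner-vertex cases) to push a vertex of defect $1$ from the defect-$1$ side toward a corner. Once two vertices of defect $1$ sit at both ends of $\gamma_v$ one of the incident corners is forced to have smaller defect by \cref{lemma:defects11}(2), \cref{lemma:defectes10}, \cref{lemma:help}, or \cref{lem:def1-corner_commuting}; re-feeding that lowered corner-defect into Gauss--Bonnet shows the side $\gamma_u$ could not in fact have had defect $\leq -1$, or else produces a contradiction with minimality. \cref{lem:swap_othersides} guarantees that each swap changes the defect of $\gamma_u$ not at all and the other two sides by at most $1$, and \cref{lem:swap_corner,lem:inner def1-vertex} guarantee the relevant corner and inner-vertex defects are preserved, so the bookkeeping during this process is controlled.

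Concretely I would proceed as follows. First, by \cref{lem_sumdef} reduce to the case that some $\gamma_u$ has $\defect(\gamma_u)\in\{-1\}\cup\{n: n\le -2\}$ and derive, via \cref{prop:GaussBonnet} and the corner bound \cref{lemma:defects11}(1), that the two remaining sides together have defect $\geq 1-\defect(\gamma_u)\ge 2$, hence (again by \cref{lem_sumdef}) each has defect in $\{-?,0,1\}$ with at least one equal to $1$ and in fact, if $\defect(\gamma_u)\le -2$, both equal to $1$. Second, for a side $\gamma_v$ of defect $1$: if it has $\ge 3$ inner vertices apply \cref{lemma:defects_of_b} to get a swap-surface with $\defect(a_v)=\defect(b_v)=1$; if it has exactly $2$ inner vertices apply \cref{lem:def1_corner} directly; if it has exactly $1$ inner vertex handle it by hand using that a defect-$1$ side with one inner vertex forces that vertex to have defect $1$. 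Third, invoke \cref{lemma:defects11}(2) (or \cref{lemma:defectes10}/\cref{lemma:commute1}/\cref{lemma:commute3} depending on whether the corner's involutions commute and the precise defect pattern) to conclude that an incident corner has defect $\le 1$ (or $\le 0$). Fourth, substitute this improved corner bound back into Gauss--Bonnet: the total then cannot reach $6$ unless some side we assumed negative is in fact nonnegative — a contradiction completing the induction step. The main obstacle I anticipate is the case analysis in the ``few inner vertices'' regime, where the clean swap machinery of \cref{lemma:defects_of_b} is unavailable and one must instead combine \cref{lem:2-def2corners}, \cref{lem:def1_corner}, and the commuting-corner lemmas \cref{lemma:commute1,lemma:commute3} to squeeze out the needed defect bound; getting Gauss--Bonnet to close in each such configuration, while respecting that $S$ has minimal area and circumference, is the delicate part.
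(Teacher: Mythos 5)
Your proposal follows essentially the same route as the paper's proof: Gauss--Bonnet together with the corner bound $\defect\le 2$ and \cref{lem_sumdef} narrows the possibilities to the three configurations of \cref{fig:Cases_lem_def>=0}, and in each one edge-swaps (\cref{lemma:shift}, \cref{lemma:defects_of_b}) concentrate defect-$1$ vertices next to the corners so that \cref{lemma:defectes10,lemma:defects11,lemma:commute1,lemma:commute3,lem:def1-corner_commuting} force a corner defect to drop below $2$ and make the Gauss--Bonnet count fail. What separates your sketch from the paper's two-page argument is only the deferred case-by-case bookkeeping, plus one small slip: the budget for the two remaining sides is $\ge -\defect(\gamma_u)$, not $\ge 1-\defect(\gamma_u)$, which is exactly why the mixed configuration $(-1,0,1)$ must also be treated and cannot be excluded.
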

\begin{proof} 
	Let $\gamma_u$ be a side of $S$. By Lemma~\ref{lem_sumdef} the defect along $\gamma_u$ is at most $1$. It remains to show that it is at least zero. Assume that the defect along $\gamma_u$ is less than $0$. By definition, any corner has defect at most $2$. \cref{prop:GaussBonnet} implies that the  sum of the defects along the boundary of $S$ is at least $6$. Thus there are only three cases we need to consider which we have illustrated in Figure~\ref{fig:Cases_lem_def>=0}. We use notation as in~\ref{notation2}.
	
	\begin{figure}[h!]
		\centering
		\includegraphics[]{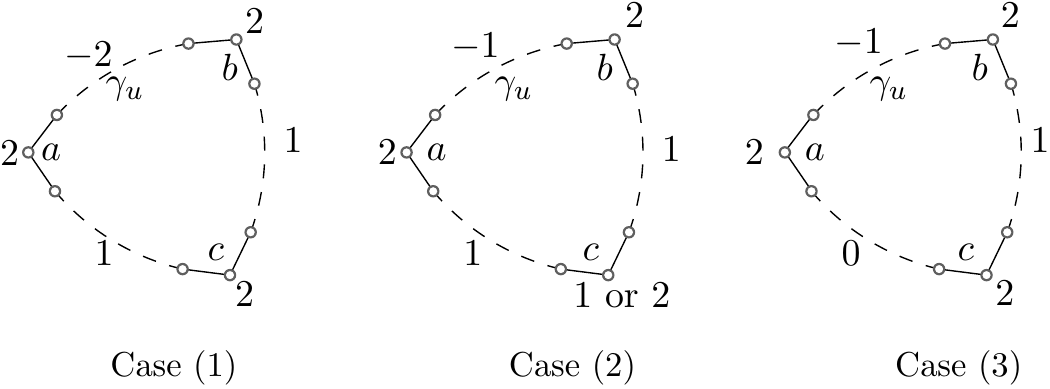}
		\caption{The three cases appearing in the proof of \cref{prop: sum_of_defects>=0} of surfaces with one side of negative defect. The numbers are the defects of the  sides and corners they label. }
		\label{fig:Cases_lem_def>=0}	
	\end{figure}

	\emph{{Case (1):} $\defect(\gamma_u)=-2$ and $\defect(\gamma_v)=\defect(\gamma_w)=1$.} \\
	\noindent
	In this case all corners of $S$ have to have defect $2$ as otherwise the defects along the boundary of $S$ sum up to at most $5$ which contradicts the combinatorial Gauss-Bonnet Lemma~\ref{prop:GaussBonnet}. By \cref{lemma:shift}, there is surface $S'$ obtained from $S$ by an edge-swap along $\gamma_w$ such that $c_w$ has defect $1$ in $S'$.  By Lemma~\ref{lem:swap_corner} the defects of the corners $a$, $b$, $c$ do not change. The vertex  $c_v$ has negative defect by Lemma~\ref{lemma:defectes10} and Lemma~\ref{lemma:defects11}. We conclude with help of \cref{lem:counting}(\ref{item_1}), that the defect  $\gamma_v$ in $S'$ is at most $0$. By Lemma~\ref{lem:swap_othersides}, the defect along $\gamma_v$ in $S'$ is $1$ or $0$. Hence it is equal to $0$. Furthermore  the defect along $\gamma_w$ is $1$ in $S'$ and the defect along $\gamma_u$ is $-3,-2$ or $-1$ in $S'$ by Lemma~\ref{lem:swap_othersides}. As the defects along the boundary of $S$ sum up to at least $6$ by the combinatorial Gauss-Bonnet Lemma~\ref{prop:GaussBonnet}, the defect along $\gamma_u$ is $-1$ in $S'$.  We will be dealing with the surface $S'$ in case (3).  
	
	\emph{{Case (2):} $\defect(\gamma_u)=-1$ and $\defect(\gamma_v)=\defect(\gamma_w)=1$.}\\
	\noindent
	Suppose first that $c$ has defect $2$. 
	As in case (1) we may argue that there is a  surface $S'$ obtained from $S$ via an edge-swap along $\gamma_w$ such that $\defect(c_w)=1$ and $\defect(c_v)=-1$ and where $\defect(\gamma_w)=1$. The defect along $\gamma_v$ is at most $0$ by \cref{lem:counting}(\ref{item_1}) and at least  $0$ by Lemma~\ref{lem:swap_othersides}. Thus $\defect(\gamma_v)=0$. Lemma~\ref{lem:swap_othersides} then implies that the defect along the third side in $S'$ is in $\{-2, -1, 0\}$. If the defect along $\gamma_u$ is $0$ we have found a desired surface. If it is $-1$ we have reduced the situation to the third case. 
	If it is $-2$ the defects along the boundary of $S$ sum up to at most $5$ which contradicts \cref{prop:GaussBonnet}. 
	
	Suppose now that $\defect(c)= 1$ in $S$. Then $a$ and $b$ have both defect $2$ and the involutions corresponding to $c$ do not commute by Lemma~\ref{lem:def1-corner_commuting} as  otherwise the defects along the boundary of $S$ sum up to at most $5$. Hence the involutions corresponding to $a$ or $b$ commute. We will show that this leads to a contradiction. 
	
	Recall that $\defect(\gamma_w)=1$ in $S$. By \cref{lemma:shift} there exists surface $S'$ obtained from $S$ via an edge-swap along $\gamma_w$ such that $a_w$ has defect $1$ in $S'$ and a another surface $\hat S$ obtained from $S'$ by an edge-swap along $\gamma_v$ such that $b_v$ has defect $1$. By \cref{lem:swap_corner,lem:inner def1-vertex} the defect of the corners do not change and $a_w$ and $b_v$ have defect $1$ in $\hat S$. The defects of both $a_u$ and $b_u$ are negative in $\hat S$ as otherwise the defect of the corresponding corners would be at most $1$ by \cref{lemma:defectes10,lemma:defects11}. 
	If either $\defect(a_u)$ or $\defect(b_u)$ is $<-1$ the defect along $\gamma_u$ is at most $-2$ by \cref{lem:counting}(\ref{item_6}). Then the defects along the boundary of $S$ sum up to at most $5$ which contradicts \cref{prop:GaussBonnet}. Hence $a_u$ and $b_u$ have both defect $-1$ in $\hat S$. In particular, $a$ and $b$ are both adjacent to a vertex of defect $1$ and a vertex of defect $-1$. But then neither the involutions corresponding to $a$ nor those corresponding to $b$ commute as otherwise the defect of $a$, respectively $b$, would be at most $1$ by \cref{lemma:commute3}. And we have arrived at a contradiction. 
	
	\emph{{Case (3):} $\defect(\gamma_u)=-1$, $\defect(\gamma_w)=0$ and $\defect(\gamma_v)=1$.}\\
	\noindent
	Suppose $S$ is a surface with the listed properties. Then each corner of $S$ has defect $2$, as otherwise the defects along the boundary of $S$ sum up to less then $6$. This implies that each side of $S$, also $\gamma_w$, has at least one inner vertex. If $\gamma_w$ does not contain an inner vertex $S$ is a single $2$-simplex. Then the defect along no side would be negative. 
	
	By Corollary~\ref{lem:2-def2corners} the side $\gamma_v$ contains at least $3$ inner vertices. Hence there exists a swap-surface $S'$ of $S$ along $\gamma_v$ such that $b_v$ and $c_v$ have defect $1$ and such that the defect along $\gamma_v$  is $1$ in $S'$ by Lemma~\ref{lemma:defects_of_b}. Every corner of $S'$ has defect $2$ by Lemma~\ref{lem:swap_corner}. Thus the vertices $b_u$ and $c_w$ have both negative defect by Lemma~\ref{lemma:defectes10} and Lemma~\ref{lemma:defects11}. By \cref{lem:counting}(\ref{item_1}), the defect along $\gamma_u$ and $\gamma_w$ is at most $0$. If both defects are $0$, we have found a desired surface. 
	
	In the remaining case one geodesic has defect $0$ and the other $-1$ as otherwise the sum of the defects along the boundary of $S$ is less than $6$. 
	Potentially switching the roles of $u$ and $v$ we may assume that $\gamma_u$ has defect $-1$ and $\gamma_w$ has defect $0$.
	
	Recall that $c_w$ has negative defect.  By \cref{lem_sumdef}, any vertex on $\gamma_u$ has defect at most $1$. Furthermore two vertices of positive defect on $\gamma_u$ are separated by a vertex of negative defect. Hence the vertex closest to $a_w$ on $\gamma_w$ with nonzero defect has defect $1$. By \cref{lemma:shift}  there exists a surface $S''$ obtained from $S'$ by an edge-swap along $\gamma_w$ such that $a_w$ has defect $1$ in $S'$.   From \cref{lem:swap_corner} we obtain that each corner of $S''$ has defect $2$. And Lemma~\ref{lem:inner def1-vertex} implies that the vertices $b_v$ and $c_v$ have defect $1$ in $S''$. It follows from   \cref{lem_sumdef}, that $\defect(\gamma_v)=1$ in $S''$. 
	
	Using \cref{lemma:defectes10,lemma:defects11} we may argue as above and obtain that the vertices $b_u$ and $c_w$ have both negative defect.  By \cref{lem:counting}(\ref{item_1}), the defect along $\gamma_u$ and $\gamma_w$ is at most $0$. If both defects are $0$, we have found a desired surface. Otherwise, $\defect(\gamma_w)=0$ in $S''$ by Lemma~\ref{lem:swap_othersides} and the defect along $\gamma_u$ is $-1$ because otherwise the defects along the boundary of $S''$ would not sum up to at least $6$. It follows from \cref{lem:counting}(\ref{item_1}) that the vertex $c_w$ has defect $-1$. 
	By \cref{lemma:defectes10,lemma:defects11} the defect of $a_u$ is negative. Recall that $c_u$ has negative defect and that $\gamma_u$ has defect $-1$. Hence \ref{lem_sumdef} implies that the defect of $b_u$ is $-1$. All in all,  $S''$ is a surface in which each corner is adjacent to a vertex of defect $-1$ and to a vertex of defect $1$. 
	But then also the corner whose corresponding involutions commute is adjacent to a vertex of defect $1$ and to a vertex of defect $-1$. This contradicts Lemma~\ref{lemma:commute3}.
\end{proof}

We have shown that $S$ can be chosen such that no side has negative defect. The next proposition and the lemma afterwards establish further properties of such surfaces. 

\begin{prop}[Bounding defects at corners]\label{prop:defectes_along_1_sides}
	With notation as in \ref{notation2} suppose the defect along all sides of $S$ is nonnegative and that $\gamma_u$ has defect one. Then $\defect(a)\leq 1$ and $\defect(b)\leq 1$. 
\end{prop}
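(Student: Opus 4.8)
The plan is to argue by contradiction following the same template used for the commuting corner in \cref{lemma:commute1}, but now organized so that it handles an arbitrary corner incident to a side of defect one. Suppose $\defect(\gamma_u)=1$ and, towards a contradiction, that $\defect(a)=2$; the case $\defect(b)=2$ is symmetric. Since the defect along $\gamma_u$ equals one, \cref{lem:counting}(\ref{item_7}) says the vertex on $\gamma_u$ closest to $a$ with nonzero defect has defect $1$. By \cref{lemma:shift} we may replace $S$ by a swap-surface $S'$ along $\gamma_u$ in which $a_u$ has defect $1$, and by \cref{lem:swap_corner} the corner $a$ still has defect $2$ in $S'$; moreover \cref{lem:swap_othersides}(1) tells us $\defect(\gamma_u)$ is unchanged, so we may as well rename $S'$ back to $S$ and assume from the outset that $a_u$ has defect $1$.

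Next I would pin down the local picture at $a$. Because $\defect(a)=2$, the corner $a$ lies in a single $2$-simplex of $S$, so its two boundary-neighbours $a_u$ (on $\gamma_u$) and $a_w$ (the neighbour of $a$ on $\gamma_w$, or the corner $c$ itself if $\gamma_w$ has no inner vertex) are joined by an edge $a_u\sim a_w$. We split according to $\defect(a_w)$. If $\defect(a_w)=1$, then \cref{lemma:defects11}(2) (when $\gamma_w$ has an inner vertex) gives $\defect(a)\le 1$ immediately, a contradiction; if $\gamma_w$ has no inner vertices one argues the analogous statement directly (the two defect-$1$ neighbours of $a$ would have a common further neighbour lying in $X_u\cap X_w$, contradicting minimality of $S$ — this is exactly the argument of \cref{lemma:defects11}(2)). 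If $\defect(a_w)=0$, then \cref{lemma:defectes10} (again, or its evident analogue when $\gamma_w$ has no inner vertices, which is the hypothesis permitted in \cref{lemma:help2}(2) and \cref{lemma:help}(2)) yields $\defect(a)\le 1$, again a contradiction. The only remaining possibility is $\defect(a_w)=-1$.

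So assume $\defect(a_w)=-1$. Now I invoke \cref{lemma:commute3} in the form appropriate to a general corner: $a$ is adjacent on one incident side to a vertex of defect $1$ (namely $a_u$) and on the other to a vertex of defect $-1$ (namely $a_w$). If the two involutions associated to $a$ commute, \cref{lemma:commute3} forces $\defect(a)\le 1$, contradiction. If they do not commute, I still need a contradiction, and this is the step I expect to be the main obstacle: for the noncommuting corner there is no ready-made analogue of \cref{lemma:commute3}. Here the intended route is to exploit \cref{prop:bicycle} together with \cref{lemma:4} and \cref{lemma:help}/\cref{lemma:help2}. Concretely: since $a_u$ has defect $1$, \cref{lemma:4} shows $a^u\nsim c_u'$ and $a_u\ne a_u^u$ (writing $c_u'$ for the inner neighbour of $a$ in $S$, which here is $a_w$'s partner in the unique triangle at $a$), and \cref{lemma:help2} lets us assume $a^{\langle u,w\rangle}$ spans a simplex after a further edge-swap that does not disturb the defect along $\gamma_u$; then \cref{lemma:help}(1) applies with $a_u^w\sim a_w$ and $a^u\nsim a_w$ — these adjacencies being forced by \cref{lemma:2} and the $\defect(a_u)=1$ condition — to conclude $\defect(a)\le 1$. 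Assembling the book-keeping so that every edge-swap used in this last step is checked against \cref{lem:swap_corner,lem:swap_othersides,lem:inner def1-vertex} to ensure $\defect(\gamma_u)$ stays $1$ and $\defect(a)$ stays $2$ up to the moment of contradiction is the delicate part; once that is arranged, the contradiction in all three sub-cases establishes $\defect(a)\le 1$, and symmetrically $\defect(b)\le 1$.
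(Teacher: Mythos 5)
There is a genuine gap, and it sits exactly where you flag ``the main obstacle'': the sub-case $\defect(a_u)=1$, $\defect(a_w)=-1$ with the involutions at $a$ \emph{not} commuting. The tools you propose there do not apply. \cref{lemma:help2} requires either $\defect(a_w)=0$ (its case (1)) or that $S$ is two $2$-simplices (its case (2)), so you cannot invoke it to make $a^{\langle u,w\rangle}$ a simplex when $\defect(a_w)=-1$. Worse, \cref{lemma:help}(1) needs the adjacency $a_u^w\sim a_w$, and this is \emph{not} forced by \cref{lemma:2}: that lemma only produces adjacencies of the form $p^u\sim q$ for $p,q$ adjacent vertices both lying in the single invariance set $X_u$, whereas $a_u\in X_u$ and $a_w\in X_w$ live on different sides, so nothing relates $a_u^w$ to $a_w$. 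A local contradiction at a noncommuting corner flanked by defects $1$ and $-1$ is simply not available in the paper's toolkit, and the paper does not attempt one. Instead the actual proof is global: after a sequence of edge-swaps (tracked via \cref{lem:swap_corner,lem:swap_othersides,lem:inner def1-vertex} and the Gauss--Bonnet count) it arranges that each candidate corner is adjacent to a vertex of defect $1$ and a vertex of defect $-1$, or has defect $1$ with both nearby nonzero defects equal to $1$; since exactly one of the three corners of $S$ must correspond to the commuting pair $\{r,s\}$, \cref{lemma:commute3} (respectively \cref{lem:def1-corner_commuting}) applied to \emph{that} corner --- which is in general not $a$ --- yields the contradiction. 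This ``no corner can host the commuting pair'' argument is the essential idea your proposal is missing.

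A secondary gap: you dismiss the configurations where $\gamma_v$ and/or $\gamma_w$ have no inner vertices as ``evident analogues.'' In the paper these are Cases (1) and (2) of the proof, and Case (1) (both other sides degenerate, so $S$ is the rhombus of two $2$-simplices) is a substantial standalone argument: one must first show via \cref{lemma:4,lemma:help,lemma:help2} and \cref{prop:H} that neither $a$ nor $b$ can be the commuting corner, and then derive a $4$- or $5$-cycle without diagonals from the assumption that $c$ is. None of that is a routine adaptation of \cref{lemma:defects11} or \cref{lemma:defectes10}, both of which explicitly assume the relevant sides have inner vertices.
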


\begin{proof}	
	We suppose that the defect along all sides of $S$ is nonnegative and that $\gamma_u$ has defect one. 
	We distinguish three cases.
	
	\emph{Case (1): There are no inner vertices in the sides $\gamma_v$ and $\gamma_w$.} \\  
	\noindent 	
	In this case, two corners have defect $2$ and the third corner has defect at least $1$ because the sum of the defects along the boundary of $S$ is at least $6$ by \cref{prop:GaussBonnet}. Because $\gamma_v$ and $\gamma_w$ don't have inner vertices and since $\gamma_u$ has at least one inner vertex, the corner $c$ does not have defect two. Hence, the corners $a$ and $b$ have defect two and $c$ has defect one. It follows that the corner $c$ is contained in exactly two $2$-simplices of $S$ and that $a$ and $b$ are contained in exactly one $2$-simplex respectively. As $\gamma_v$ and $\gamma_w$ don't have inner vertices, the sides $\gamma_v$ and $\gamma_w$ are contained in these two $2$-simplices respectively and we are in the special case illustrated in Figure~\ref{fig:specialcaseraute}.

	\begin{figure}[h!]
		\centering
		\includegraphics[]{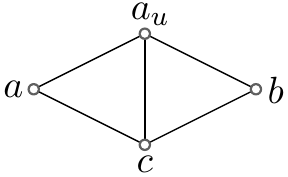}
		\caption{A special case of the surface $S$. }
		\label{fig:specialcaseraute}
	\end{figure}
	
	We prove that this special case pictured in Figure \ref{fig:specialcaseraute} does not occur. 
	Recall that one of the corners of $S$ correspond to two commuting involutions and that the orbit of this corner under the group generated by these two involutions spans a simplex by $6$-largeness. First we will show that neither the involutions corresponding to $a$ nor the involutions corresponding to $b$ commute. Therefore the commuting involutions have to correspond to $c$. We will see that this implies the existence of a $4$-cycle or a $5$-cycle without diagonals which contradicts $6$-largeness. 
	
	\emph{Claim 1: The involutions $u,w$ corresponding to $a$ and $b$ do not commute.}\\  
	\noindent 
	Assume otherwise. Then $C=(a^u,a_u,c,a^w)$ forms a closed path by Lemma~\ref{lemma:2}. By Lemma~\ref{lemma:4} the complex $X$ does not contain the edge $(a^u, c)$. So $C$ is a cycle of length $4$ containing the diagonal  $(a_u, a^w)$ by $6$-largeness. Hence $a\sim a_u^w$. Since $u$ and $w$ commute, $a_u^w \in X_u$ $(u(a_u^w)=w(a_u^u)\sim a_u^w$ because $a_u^u \sim a_u$) and $a \in X_u$ by definition. Thus $a^u\sim a_u^w$ by Lemma~\ref{lemma:2}. Applying Lemma~\ref{lemma:2} again and using that the group action is simplicial we conclude that $(a^u,a_u^w,c^w,c,a_u)$ forms a closed path. If $c = c^w$ it is a cycle of length  $4$ and contains $(a_u, a_u^w)$ or $(a^u, c)$ which both lead to a contradiction by the minimality of $S$ or Lemma~\ref{lemma:4}. If $c \ne c^w$, the cycle has length $5$ and the same argumentation yields that $a_u\nsim a_u^w$ and $a^u \nsim c$. Then the existence of the diagonal $(c,a_u^w)$ would lead to a $4$-cycle without diagonals. Hence $X$ does not contain $(c,a_u^w)$ and hence also not $(c^w,a_u)$. But then the described $5$-cycle has at most one diagonal which contradicts $6$-largeness.
	Thus $u$ and $w$ have to commute. Similarly one shows that the involutions corresponding to  $b$ do not commute. \qed\ claim 1.
	
	It remains to consider the subcase that the involutions $v,w$ corresponding to corner $c$ commute. We will use the following observation.
	
	\emph{Claim 2: If the involutions $v$ and $w$ corresponding to corner $c$ commute then $X$ contains the two diagonals $(a_u,c^v)$ and $(a_u,c^w)$ but not the diagonal $(b,c^w)$. }
	
	Since $v$ and $w$ commute, $X$ contains the closed path $(a,a_u,b,c^v,c^w)$. Using again that $v$ and $w$ commute, a case by case analysis shows that it is a cycle of length $4$ or $5$ depending on whether $c^v = c^w$ or not. By minimality of $\gamma_u$, $a$ is not incident to $b$. Thus either $a_u \sim c^w$ or $a_u \sim c^v$ since otherwise $X$ contains a $4$-cycle without a diagonal. This argument is symmetric hence we may assume that $a_u \sim c^w$. To arrive a contradiction assume further that $b \sim c^w$. Apply  Lemma~\ref{lemma:help2} and reduce the situation to the case that $a^{\langle u,w\rangle}$ spans a simplex as $a_u \sim c^w$, $a_u^w \sim c$. By Lemma~\ref{lemma:4} the complex $X$ does not contain the edge $(a^u, c)$. Hence $a$ has defect at most $1$ by Lemma~\ref{lemma:help} which has us arrived at a contradiction. Thus it remains to consider the case where $b \nsim c^w$. Then $(a_u,c^w,c^v,b)$ forms a $4$-cycle containing the diagonal $(a_u,c^v)$ by $6$-largeness. We have shown that one can reduce the situation to the case where $X$ contains the two diagonals  $(a_u,c^v)$ and $(a_u,c^w)$ but not the diagonals $(b,c^w)$. This proves the claim. \qed\  claim 2.
	
	Recall that it remains to consider the subcase that the involutions $v,w$ corresponding to corner $c$ commute. 
	By ~\cref{lemma:4}, the complex $X$ does not contain the edges $(a^u, c)$ and $(b^u,c)$. Hence we may apply Lemma~\ref{lemma:help} to the corners $a$ and $b$ and conclude that neither $a^{\langle u,w  \rangle}$ nor $b^{\langle u,v  \rangle}$ span a simplex. 
	By Proposition~\ref{prop:H} there exists a vertex $x\in X_v\cap X_u$ which is connected to $b^u$, $b$ and $b^v$  and $X$ contains the closed path $(a_u,b^u,x,b^v,c)$. It is not hard to see that this is a cycle of length $5$.
	By Proposition~\ref{prop:H} the orbit of $b$ under $\langle u,v\rangle $ spans a cycle without diagonals. Thus $b^u \nsim b^v$. By Lemma~\ref{lemma:4}, $c \nsim b^u$. Hence there are only two possible configurations: either $x$ is adjacent to $a_u$ and $c$ or $a_u \sim x$ and $a_u \sim b^v$. In the former case we obtain a new surface $S'$ with the same minimality properties like $S$ by exchanging $b$ with $x$. By Proposition~\ref{prop:H} we can choose $x$ so that $x^{\langle u,v\rangle}$ spans a simplex and we arrive a contradiction by applying Lemma~\ref{lemma:help} as above. In the second case $X$ contains the closed path $C=(c^w,a_u,b,a_u^v,c^{vw})$ which is a $4$- or $5$-cycle depending on whether $c=c^v$ or not. By assumption it does not contain the diagonal $(b,c^w)$ and it does not contain $(a_u,a_u^v)$ for minimality reasons. Notice that  $b^w \in X_v$ since $v(b^w)= w(b^v)\sim b^w$ as $b \sim b^v$. Thus $C$ does not contain $(b, c^{vw})$ as diagonal. Otherwise $b^w \sim c^v$  which leads to the existence of $(c,b^w)$ by applying Lemma~\ref{lem:u-stableSplx}. This contradicts  Claim 2.  Regardless of whether $X$ contains the remaining diagonals of $C$ or not, $X$ contains a $4$- or a $5$-cycle without a diagonal which contradicts $6$-largeness.

	\emph{{Case (2):} Exactly one of the two sides $\gamma_v$ and $\gamma_w$ does not have inner vertices}\\
	\noindent
	Potentially switching the roles of $v$ and $w$ we may assume that $\gamma_w$ has no inner vertices. 
	Suppose for a contradiction that one of the corners $a$ and $b$, say $a$, has defect two.
	\cref{lem:counting,lemma:shift} imply the existence of a surface $S'$ obtained from $S$ by an edge-swap along $\gamma_u$ such that the defect of $a_u$ is $1$. By \cref{lem:swap_corner} and \cref{lem:swap_othersides}, the defect along  $\gamma_u$ in $S'$ is $1$ and the defect of $a$ in $S'$ is $2$. Then $a_u$ and its neighbor $a_u' \ne a$ on $\gamma_u$ are both connected to $c$ in $S'$. If $a_u'=b$ the geodesic $\gamma_v$ does not contain inner vertices. Then $S'$ is a surface as in case (1) and we have proven already that such a surface does not exist. Hence we assume that $a_u'\ne b$. Then  $a_u'$ is an inner vertex of $\gamma_u$ that is connected to $c$ and $\gamma_v$ contains a further vertex adjacent to $c$ that neither coincides with $a$ nor with $a'$. Hence, the degree of $c$ in the $1$-skeleton of $S$ is at least $4$ and $\defect(c)\leq 0$. As the sum of the defects along the boundary of $S$ is at least $6$ by \cref{prop:GaussBonnet}., the defect along $\gamma_v$ is $1$ and  $\defect(b)=2$.
	We replace the surface $S'$ with a surface $S''$ that is obtained from $S'$ by an edge-swap along $\gamma_v$ such that $b_v$ has defect $1$ in $S''$  by means of \cref{lemma:shift}. By \cref{lem:swap_corner}, the defects of the corners of $S'$ stay the same. Then the defect of $b_u$ on $\gamma_u$ is negative in $S''$ by \cref{lemma:defectes10,lemma:defects11}.  Hence $\gamma_u$ has defect at most $0$ in $S''$ by \cref{lem:counting}(\ref{item_1}).  But then the sum of the defects along the boundary of $S''$ is at most $5$ which contradicts \cref{prop:GaussBonnet}.

	\emph{{Case (3):} Every side of $S$ has inner vertices.}\\
	\noindent 
	The proof in this case has to steps. In the first step we prove the claim that either $a$ or $b$ has defect at most $1$. In the second step we conclude that both corners have defect at most $1$.
	
	\emph{Claim 3: One corner incident to $\gamma_u$ has defect at most $1$.}\\
	\noindent
	To arrive a contradiction we assume that $a$ and $b$ have defect $2$ and show the existence of a repeated swap surfaces of $S$ which do not satisfy all necessary properties. 
	
	The side $\gamma_u$ contains at least $3$ inner vertices by Corollary~\ref{lem:2-def2corners}. Lemma~\ref{lemma:defects_of_b} implies that there exists a swap surface $S'$ of $S$ along $\gamma_u$ such that $a_u$ and $b_u$ have defect $1$ and such that the defect along $\gamma_u$ is $1$ in $S'$.  Then the vertices $a_w$ and $b_v$ have  negative defect in $S'$ as otherwise the corresponding corners would have defect at most $1$ by \cref{lemma:defectes10} and Lemma~\ref{lemma:defects11}. By \cref{lem:counting}(\ref{item_1}), the defect along $\gamma_w$ and $\gamma_v$ is at most $0$ in $S'$. At least one of them has defect $0$ as otherwise the sum of the defects along the boundary of $S$ is less than $6$. By symmetrical reasons we can assume without loss of generality that the defect along $\gamma_w$ is $0$. By \cref{lem:counting}(\ref{item_4}) and \cref{lem:counting}(\ref{item_2}), $a_w$ has defect $-1$ and the vertex closest to $c$ on $\gamma_w$ has defect $1$.  By \cref{lemma:shift}, there exists a surface $\hat S$ that is obtained from $S'$ by an edge-swap along $\gamma_w$ such that $c_w$ has defect $1$ in $\hat S$. By Lemma~\ref{lem:swap_corner} and Lemma~\ref{lem:inner def1-vertex}, the defect of the corners and the defects of  $a_u$ and $b_u$ do not change. As before we observe that $a_w$ and $b_v$ have negative defect in $\hat S$ and that the defect along $\gamma_w$ and $\gamma_v$ in $\hat S$ is at most $0$. 
	
	If corner $c$ has defect $2$ in $\hat S$ , $c_v$ has negative defect by Lemma~\ref{lemma:defectes10} and Lemma~\ref{lemma:defects11}. Then $b_v$ and $c_v$ have both negative defect. If one of these defects would be less than $-1$, the sum of the defects along the boundary of $S$ would be less than $6$. Hence any corner is adjacent to a vertex of defect $1$ and a vertex of defect $-1$. In particular, the corner whose corresponding involutions commute satisfies this property. This contradicts Lemma~\ref{lemma:commute3}. 
	
	If corner $c$ has defect $1$, the defect along $\gamma_w$ and $\gamma_v$ it $0$ as otherwise the  sum of the defects along the boundary of $S$ is less than $6$. By \cref{lem:counting}(\ref{item_4}) and \cref{lem:counting}(\ref{item_2}), $a_w$ and $b_v$ have defect $-1$ and the vertex closest to $c$ on $\gamma_v$ with nonzero defect has defect $1$. Recall that $c_w$ has defect $1$. By Lemma~\ref{lem:def1-corner_commuting}, the involutions corresponding to $c$ do not commute. Hence the involutions corresponding to $a$ or $b$ commute. But this contradicts Lemma~\ref{lemma:commute3}. 
	We have now shown that $a$ or $b$ has defect at most $1$. \qed\ claim 3.
	
	Recall that we suppose that the defect along all sides of $S$ is nonnegative and that $\gamma_u$ has defect one. We are now well-prepared to prove that both corners incident to $\gamma_u$ have defect at most $1$. 
	We assume for a contradiction that one of the corners $a$ and $b$, say $a$, has defect $2$. By claim $4$ we may assume that $b$ has defect at most $1$. We show the existence of a repeated swap surfaces of $S$ which do not satisfy all necessary properties. 
	

	By \cref{lem:counting}(\ref{item_7}) and \cref{lemma:shift}, there exists a surface $S'$ obtained from $S$ by an edge-swap along $\gamma_u$ in which $a_u$ has defect $1$.  By \cref{lem:swap_othersides}, the defect along $\gamma_u$ in $S'$ is $1$. By Lemma~\ref{lem:swap_corner}, $a$ has defect $2$ in $S'$ and the defect of $b$ is at most $1$ in $S'$. As above we conclude that $a_w$ has negative defect in $S'$ and that the defect along $\gamma_w$ is at most $0$.
	
	First we consider the case that $\gamma_w$ has negative defect in $S'$. Then the defect along $\gamma_u$  and $\gamma_v$ is $1$, $b$ has defect $1$ and $c$ has defect $2$ as otherwise the sum of the defects along the boundary of $S'$ is less than $6$.
	We apply \cref{lemma:shift} and conclude that there is a surface $\hat S$ that is obtained from $S'$ by an edge-swap along $\gamma_v$ such that $c_v$ has defect $1$ in $\hat S$. 
	By Lemma~\ref{lem:swap_corner} and Lemma~\ref{lem:inner def1-vertex}, the defects of the corners of $S'$ and the defect of $a_u$ do not change. By Lemma~\ref{lemma:defectes10} and Lemma~\ref{lemma:defects11}, vertices $a_w$ and $c_w$ have negative defect. If one of them would be smaller than $-1$, the defect along the side $\gamma_w$ would be at most $-2$ because of \cref{lem_sumdef} and the sum of the defects along the boundary of $\hat S$ would be less than $6$. Hence $a$ and $c$ are adjacent to a vertex of defect $1$ and a vertex of defect $-1$. Hence the corresponding involutions do not commute by Lemma~\ref{lemma:commute3}. But then the involutions corresponding to $b$ commute which contradicts \cref{lem:def1-corner_commuting}.
	
	It remains to consider the case where the side $\gamma_w$ in the surface $S'$ has defect $0$.  Then $a_w$ has defect $-1$  and the vertex closest to $c$  on $\gamma_w$ with nonzero defect has defect $1$ by \cref{lem:counting}(\ref{item_4}) and \cref{lem:counting}(\ref{item_2}). We apply \cref{lemma:shift} and obtain a surface $\hat S$ that is obtained from $S'$ by an edge-swap along $\gamma_w$ such that $c_w$ has defect $1$ in $\hat S$.   By Lemma~\ref{lem:swap_corner} and Lemma~\ref{lem:inner def1-vertex}, the defects of the corner $a$ and the defect of $a_u$ do not change. Hence, $\defect(a_u)=1$ and $\defect(a)=2$ in $\hat S$. By \cref{lem:swap_othersides}, $\defect(\gamma_w)=0$. As above follows that $\defect(a_w)=-1$ in $\hat S$. By \cref{lem:swap_corner}, the defect of $b$ is at most $1$ in $\hat S$.
	
	If the defect of $c$ in $\hat S$ is $1$, the defect along $\gamma_u$ and $\gamma_v$ in $\hat S$ is $1$ as otherwise the sum of the defects along the boundary of $S$ is less then $6$. Then the involutions corresponding to $b$ and $c$ do not commute by Lemma~\ref{lem:def1-corner_commuting}. Hence the involutions corresponding to $a$ commute which contradicts \cref{lemma:commute3}.
	
	If $c$ has defect $2$ in $\hat S$ , we argue like before that $c_v$ has defect $-1$.  Because $c_v$ has defect $-1$, $\defect(\gamma_v)\le 0$ because of \cref{lem_sumdef}. Recall that the defects along the boundary of $\hat S$ sum up to $6$. Hence  $\defect(\gamma_v)= 0$. By \cref{lem:counting}(\ref{item_4}), the vertex closest to $b$ on $\gamma_v$ with nonzero defect has defect $1$. As $\defect(a)=2$, $\defect(c)=2$, $\defect(b)\le1$, $\defect(\gamma_w)= \defect(\gamma_v)=0$ in $\hat S$, the defect along $\gamma_u$ is $1$ by \cref{prop:GaussBonnet}.  By \cref{lem:counting}(\ref{item_7}), the vertex on $\gamma_u$ closest to $b$ with nonzero defect has defect $1$. Then the involutions corresponding to $b$ do not commute by Lemma~\ref{lem:def1-corner_commuting}. 
	Note that the corners $a$ and $c$ are adjacent to a vertex of defect $1$ and to a vertex of defect $-1$. Hence the involutions corresponding to $a$ and $c$ do not commute by Lemma~\ref{lemma:commute3}. All in all neither $a$ nor $b$ nor $c$ is the corner of $\hat S$ whose corresponding involution commute which is a contradiction.  
\end{proof}

\begin{lemma}
	If every side of $S$ has at least one inner vertex, the defect along at least one side of $S$ is nonzero. 
	\label{lem:zero-sides}
\end{lemma}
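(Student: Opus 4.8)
The plan is to argue by contradiction: suppose that the defect along \emph{every} side of $S$ is $0$. Since $S$ is a minimal, hence systolic, surface, the combinatorial Gauss--Bonnet Proposition~\ref{prop:GaussBonnet} gives that the sum of the defects along $\partial S$ is at least $6$. The inner vertices of the three sides contribute $0$ to this sum, and each corner has defect at most $2$ by \cref{lemma:defects11}, so all three corners must have defect exactly $2$ and Proposition~\ref{prop:GaussBonnet} holds with equality (in particular $S$ has no interior vertex of negative defect). Because every corner has defect $2$ and every side has an inner vertex by hypothesis, \cref{lem:2-def2corners} applies to each side and shows that every side of $S$ has at least three inner vertices; in particular the neighbours $x_r,x_s$ of the commuting corner $x$ on $\gamma_r,\gamma_s$ are genuine inner vertices of their sides, so $\defect(x_r),\defect(x_s)\leq 1$ by \cref{lem_sumdef}.

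Next I would normalise the surface near $x$ by edge-swaps and contradict the commute lemmas. The elementary fact that makes this work is that a geodesic side of defect $0$ carrying some vertex of nonzero defect has, read from at least one of its two endpoints, first nonzero defect equal to $+1$: a sequence of integers $\leq 1$ that sums to $0$ and in which two entries equal to $1$ are always separated by an entry $\leq -1$ (this is \cref{lem_sumdef}) cannot have $-1$ as its first nonzero entry as seen from both ends, since then the number of $(-1)$-entries would strictly exceed the number of $(+1)$-entries. Using this together with \cref{lemma:shift} --- and \cref{lem:swap_othersides}, \cref{lem:swap_corner}, \cref{lem:inner def1-vertex} to control how a swap along one side affects the defects of the other sides, the corners, and the defect-$1$ inner vertices --- one can push a $+1$ onto $x_r$ and onto $x_s$ whenever $\gamma_r$ and $\gamma_s$ are not entirely flat, in which case \cref{lemma:commute1} forces $\defect(x)\leq 0<2$, a contradiction. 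If the first nonzero defect on such a side is only reachable from the non-commuting endpoint, one performs the analogous normalisation at that corner instead and contradicts \cref{lemma:defectes10} or \cref{lemma:defects11} (the two-ones case) there.

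The main obstacle is the case where $\gamma_r$ or $\gamma_s$ --- worst of all, both --- have all inner vertices of defect $0$, so that no $+1$ is available to shift and $\defect(x_r)$, respectively $\defect(x_s)$, is stuck at $0$ and none of the commute lemmas applies. Here one must argue directly. Since $\defect(x)=2$, the corner $x$ lies in exactly one $2$-simplex $(x,x_s,x_r)$, so $\deg_S(x)=2$ and $x_s\sim x_r$. By \cref{lem:commuting uv} the orbit $x^{\langle r,s\rangle}$ spans a simplex with $x^r\sim x^s$, and \cref{lem:u-stableSplx} applied to the edges $\{x,x_r\}\subset X_r$ and $\{x,x_s\}\subset X_s$ yields $x^r\sim x_r$, $x^s\sim x_s$, together with $x\sim x_r^r$ and $x\sim x_s^s$. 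As $\deg_S(x)=2$, each of $x^r,x^s,x_r^r,x_s^s$ equals $x$, $x_r$ or $x_s$, or else lies outside $S$. An identification such as $x^r=x_s$ is impossible, because then $x_s^r=x\sim x_s$ would put $x_s$ into $X_r$, so that $x_s$ could replace $x$ as the corner lying in $X_r\cap X_s$, shortening $\gamma_s$ by one edge while lengthening $\gamma_r$ by at most one (here one uses $x_s\sim x_r$ and that $X_r$ is isometrically embedded by \cref{prop:XuProperties}), contradicting the length-minimality of the defining cycle $C$. Ruling out the remaining identifications --- using that $|x^{\langle r,s\rangle}|$ divides $4$ to exclude orbits of size $3$, and pushing the triangle $(x,x_s,x_r)$ together with its $\langle r,s\rangle$-images against $6$-largeness and area-minimality --- disposes of the flat case and completes the proof. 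I expect this purely local analysis at the commuting corner, together with the careful bookkeeping needed to make the edge-swap normalisation rigorous, to be the technically heaviest part.
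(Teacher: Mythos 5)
Your global structure (all corners forced to defect $2$; edge-swap normalisation to trap the commuting corner; a separate local analysis for the flat configuration) is the right one and broadly matches the paper's, but two steps do not hold up. First, the fallback in your normalisation is wrong about where the contradiction lands. If the first nonzero defect on a side is only reachable from a non-commuting endpoint, pushing a $+1$ onto that corner's neighbour does \emph{not} contradict \cref{lemma:defectes10} or \cref{lemma:defects11}: those only bite if the corner's \emph{other} neighbour has defect $0$ or $1$, and it may perfectly well have defect $-1$ --- a defect-$2$ non-commuting corner flanked by a $+1$ and a $-1$ is not excluded by anything in the paper. What that configuration actually gives is that the other neighbour \emph{must} be $-1$, which (by your own counting fact, applied to the next defect-$0$ side) forces a $+1$ reachable from the far end of that side; propagating this around the triangle eventually pins the commuting corner between a $+1$ and a $-1$, and the contradiction there comes from \cref{lemma:commute3} --- a lemma your argument never invokes and cannot do without, since \cref{lemma:commute1} and \cref{lemma:defectes10} only cover the $(1,1)$ and $(1,0)$ patterns at the commuting corner. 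This is exactly the paper's route.

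Second, the degenerate case (both neighbours of the commuting corner stuck at defect $0$) is a plan, not a proof, and the plan you sketch --- classifying coincidences among $x^r$, $x^s$, $x_r^r$, $x_s^s$ and the orbit $x^{\langle r,s\rangle}$ --- is not the paper's and it is far from clear it closes. The paper's treatment of this case is the bulk of its proof: it introduces the interior vertex $d$ adjacent to $a_u$, $a_u'$, $a_w$, $a_w'$, rules out the edges $(a,d)$, $(a_u,a_w')$, $(a_w,a_u')$, $(a,a_u')$, $(a,a_w')$ (the first two exclusions each via an edge-swap followed by \cref{lemma:commute1} or \cref{lemma:commute3}), and then chases a chain of $5$-cycles through the $u$- and $w$-images of these vertices until it produces a diagonal-free $4$-cycle, contradicting $6$-largeness. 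Nothing in your sketch engages with the interior vertex $d$ or the second boundary vertices $a_u'$, $a_w'$, and the identification analysis you propose does not visibly terminate in a forbidden cycle. (A minor point: invoking \cref{lem:2-def2corners} to get three inner vertices per side is unnecessary here, and that lemma's proof is tied to sides of defect $1$, so leaning on it in the defect-$0$ setting is shaky.)
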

\begin{proof}
	Assume for a contradiction that the defect along all sides is $0$. 
	If one corner would have a defect smaller than $2$, the sum of the defect along the boundary of $S$ would be at most $5$ which contradicts \cref{prop:GaussBonnet}. Thus every corner has defect $2$. We will show that at least one corner of $S$, say $a$, is adjacent to a vertex of nonzero defect. As the defect along every side is $0$, the neighbors of $a$ on the boundary of $S$ have defect at least $-1$ by \cref{lem:counting}(\ref{item_4}). By Lemma~\ref{lemma:defects11} it is impossible that both neighbors have defect $1$. By Lemma~\ref{lemma:defectes10} it is impossible that one neighbor has defect $1$ and the other has defect $0$. We conclude that at least one of the two neighbors of $a$ has defect $-1$. 
	
	Without loss of generality we assume that the neighbor $a_w$  of $a$ on $\gamma_w$ has defect $-1$. Then the vertex  on $\gamma_w$ which is closest to $c$ having nonzero defect, has defect $1$ by \cref{lem:counting}(\ref{item_2}). We apply \cref{lemma:shift} to $\gamma_w$ and obtain a surface $S'$ where the defect of $c_w$ is $1$. Then the neighbor of $c$ on $\gamma_v$ in $S'$ has negative defect by Lemma~\ref{lemma:defects11} and Lemma~\ref{lemma:defectes10}. As before we observe that its defect is $-1$. Furthermore the defects of $a$, $a_u$ and $a_w$ in $S'$ are the same like before. Any edge-swap does not change the defects of $a$ by Lemma~\ref{lem:swap_corner}. As $a_u$ has defect $1$ in $S$, it is not contained in one of the swapped edges and hence its defect does not change. The defect $a_w$ does not change by the definition of the edge-swap. We repeat the same argumentation for the side $\gamma_v$ and obtain a repeated swap-surface $\hat S$ of $S$ where every corner is adjacent  to a vertex of defect $1$ and a vertex of defect $-1$. In particular the corner whose corresponding involutions commute has this property. By Lemma~\ref{lemma:commute3} this corner has defect at most $1$ which is a contradiction. 
	
	\subfiglabelskip=0pt
	\begin{figure}
		\centering
		\subfigure[][]{\label{fig:vertex_00A}\includegraphics[]{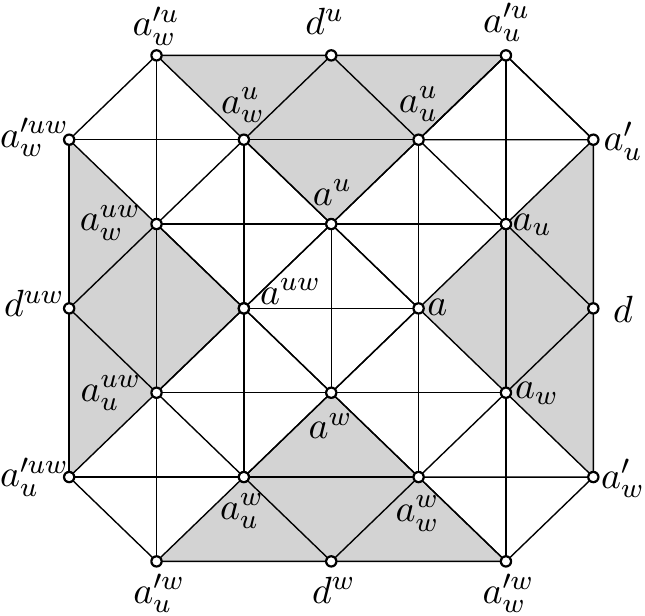}} 
		\subfigure[][]{\label{fig:vertex_00}\includegraphics[]{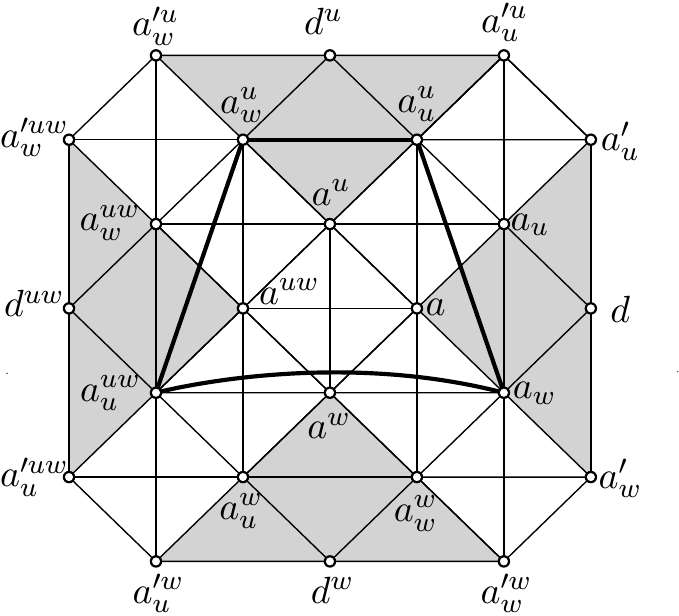}}	
		\caption{A surface $S$ studied in the proof of  Lemma~\ref{lem:zero-sides}. If $S$ has a corner $a$ that is incident to two vertices of defect $0$ then $X$ contains the subcomplex illustrated in \subref{fig:vertex_00A}.  The cycle in \subref{fig:vertex_00} is contained in $X$ and has no diagonal which contradicts $6$-largeness. }
		\label{ffig:vertex_00_commonfig}
	\end{figure}


	It remains to show that one corner of $S$ is adjacent to a vertex of nonzero defect. We show that the corner whose involutions commute has this property. Let $a$ be this corner. We assume for a contradiction  that $a_u$ and $a_w$ have defect $0$. Let $a_u'$ be the vertex on $\gamma_u$ adjacent  to $a_u$ different from $a$. Let $a_w$ be the vertex on $\gamma_w$ adjacent  to $a_w$ on $\gamma_r$ other than $a$. Let $d$ be the unique inner vertex of $S$ which is connected to $a_u$, $a_u'$, $a_w$ and $a_w'$. We obtain that $S$ contains the subcomplex shown in \cref{fig:vertex_00A}.
	
	By studying cycles of length $4$ and $5$ we will proof that $X$ contains the black thickened cycle pictured in Figure~\ref{fig:vertex_00} and that this cycle does not have diagonals which contradicts $6$-largeness of $X$.
	
	
	First we show that $X$ does not contain the edge $(a,d)$. To arrive a contradiction we assume that $X$ contains the edge $(a,d)$. Then the vertices $a$, $a_u$, $a_w$ and $d$ form a simplex in $X$. We exchange the edge $(a_u, a_v)$ with the edge $(a,d)$ in the $1$-skeleton of $S$ and obtain a new surface $S'$ with the same minimality properties like $S$ where $a$, $a_u$ and $a_w$ have defect $1$. This configuration contradicts Lemma~\ref{lemma:commute1}.
	
	We show that $X$ does not contain the edge $(a_u,a_w')$. To arrive a contradiction we assume that $X$ contains the edge $(a_u,a_w')$. Then the vertices $a_u$, $a_w$, $a_w'$ and $d$ form a simplex in $X$. We exchange the edge $(a_w,d)$ with the edge $(a_u,a_w')$ in the $1$-skeleton of $S$ and obtain a new surface $S'$ with the same minimality properties like $S$ where $a_w$ has defect $1$ and $a_u$ has defect $-1$. As the involutions corresponding to $a$ commute, this contradicts Lemma~\ref{lemma:commute3}.
	
	Similarly one can conclude that $X$ does not contain the edge $(a_w,a_u')$.
	
	But then the edges $(a,a_w')$ and  $(a,a_u')$ are not contained in $X$ as otherwise $(a,a_u,d,a_w')$ and $(a,a_w,d,a_u')$ would form $4$-cycles without diagonals. 
	
	Observe that $X$ contains the $5$-cycle $(a,a_u,d,a_w',a_w^w)$. As $X$ does not contain $(a,d)$, $(a_u,a_w')$ and $(a,a_w')$, $X$ contains the edges $(a_u,a_w^w)$ and $(d,a_w^w)$, as otherwise $X$ would contain a $4$- or $5$-cycle without diagonals. 
	
	The complex $X$ contains the $5$-cycle $(d^u, a_u'^u,a_u,a^u,a_w^u)$. We have that $a^u\sim a_w^u$, $a_w^u\sim d^u$ and $d^u\sim a_u'^u$ as the group action is simplicial and  $a_u'^u\sim a_u$ and $a_u\sim a^u$ by Lemma~\ref{lem:u-stableSplx}. $a_w^u \nsim a_u'^u$ as otherwise  $a_w\sim a_u'$. $d^u \nsim a^u$ as otherwise $d \sim a$. $a^u \nsim a_u'^u$ as otherwise $a \sim a_u'$. Hence $X$ contains the diagonals $(a_u,a_w^u)$ and $(a_u,d^u)$ as otherwise $X$ contains a $4$- or a $5$-cycle without diagonals.

	As $a_u \sim a_w^u$ and $a_u\sim a_w^w$ the complex $X$ contains the edges $(a_u^u, a_w)$ and $(a_u^w,a_w)$ and the $5$-cycle $(a_w^u, a_u^u, a_w, a_w^w, a_u^{uw})$. However, $X$  does not contain $(a_w^u,a_w)$ and the edge  $(a_u^u, a_u^{uw})$ as otherwise $a_w$ would be contained in $X_u$ and $S$ would not be minimal. Moreover, $X$ does not contain the edge $(a_w,a_u^{uw})$ as otherwise  $(a_w^u, a_u^u, a_w, a_u^{uw})$ would form a $4$-cycle without diagonals. Hence $X$ contains both  diagonals $(a_w^u, a_w^w)$ and $(a_w^w,a_u^u)$. 
	
	As $X$ contains the edges  $(a_u^u, a_w)$ , $(a_w^w,a_u^u)$ and  $(a_u^w,a_w)$ and because the action of $\Gamma$ is simplicial there exists the $4$-cycle $(a_w^u, a_u^u, a_w, a_u^{uw})$. Observe that $a_u^{uw} \nsim a_u^u $ and $a_w^u \nsim a_w$ as otherwise $a_u$ or $a_w$ would be contained in $X_u$ and $X_w$ and $S$ would not be minimal. Hence we have found a $4$-cycle without a diagonal which contradicts $6$-largeness. 
\end{proof}


\section{Proof of the fixedpoint theorem}\label{sec:mainProof}

This section contains the proof of \cref{thm:fixedPoint} which we restate below for ease of reference. 

\FixedpointThm*

\begin{proof} 
	Let in the following $S$ be a surface as constructed in Section~\ref{sec:minimalSurface}. Recall that $S$ was chosen minimally with respect to both perimeter and area.  
	We will prove the assertion by contradiction and hence assume that the action of $\Gamma$ {does not stabilize a simplex}. \cref{prop:existence} and \ref{prop:notasimplex} imply that $S$ is larger than a single $2$-simplex. We consider a series of cases for the defects of the sides of $S$.  In each of these cases we will either arrive at a situation that contradicts minimality or contradicting one of the numerous counting statements that hold for $S$ and the (sums of) defects of its vertices. 
	
	Systolicity of $S$ and the combinatorial Gauss-Bonnet Lemma \ref{prop:GaussBonnet} imply that the sum of the defects along the boundary curve $C$ of $S$ has to be at least $6$. Further recall that the defect along any side of $S$ was defined as the sum of the defects of all of its inner vertices, i.e. all vertices different from its endpoints. The two main ingredients are Propositions~\ref{prop: sum_of_defects>=0} and \ref{prop:defectes_along_1_sides}. One states that we can assume without loss of generality that the defect along any one of the sides equals $1$ or zero. The other says that two corners incident to a side have defect at most $1$, if the defect along the side is $1$. By definition, any corner has defect at most $2$.
	By \cref{prop: sum_of_defects>=0} we may assume that the defect along any one of the sides is either one or zero. 
	
	We are thus in one of the following remaining cases which are illustrated (from left to right) in Figure~\ref{fig:cases-mainProof}. 
	The defects of the sides of $S$ are
	\begin{itemize}
		\item[(a)]  $1$ on one side and $0$ on the two other sides.
		\item[(b)]  $1$ on two sides and $0$ on the third side.
		\item[(c)]  $1$ on each side.
		\item[(d)]  $0$ on each side and each side has inner vertices.
		\item[(e)]  $0$ on each side and one of the sides has no inner vertices.
	\end{itemize}
	
	\begin{figure}[h!]  
		\centering
		\includegraphics[width=0.9\textwidth]{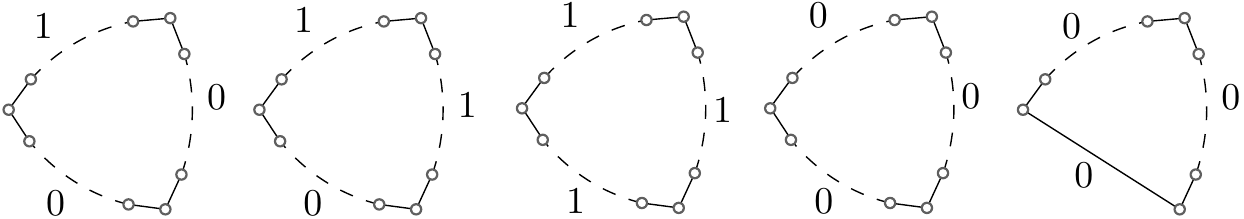}
		\caption{Cases (a) to (e) shown from left to right. }
		\label{fig:cases-mainProof}
	\end{figure}
	
	\cref{prop:defectes_along_1_sides} implies that in case (a) two corners have defect $1$ and in all other cases all corners have defect $1$. We deal with all the cases individually and will arrive at a contradiction in each of them. 
	
	\emph{Case (a):} 
	Two corners of $S$ have defect at most $1$. So the sum of the defects along the boundary of $S$ is at most $1+0+0+1+1+2 = 5$ which contradicts \cref{prop:GaussBonnet}. 
	
	\emph{Case (b):} 
	\cref{prop:defectes_along_1_sides} implies that any corner of $S$ has defect at most $1$. So the sum of the defects along the boundary of $S$ is at most $1+1+0+1+1+1 = 5$ which contradicts \cref{prop:GaussBonnet}.
	
	\emph{Case (c):} 
	Again by \cref{prop:defectes_along_1_sides} we obtain that the defects of corners is at most $1$. If only one corner has defect $0$, the sum of defects along the boundary of $S$ is at most $5$. Hence every corner has defect $1$. Because every vertex with nonzero defect  on the boundary of $S$ closest to any corner of $S$ has defect $1$  by Lemma~\itemref{lem:counting}{item_7}, we obtain by \cref{lem:def1-corner_commuting}  that there cannot be a coner for which the involutions commute which contradicts the shape of $\Gamma$. 
	
	\emph{Case (d):} 
	This case does not occur by \cref{lem:zero-sides}.
	
	\emph{Case (e):}
	If one corner has defect at most $1$, then the sum of the defects along the boundary of $S$ is at most $2+2+1=5$ which contradicts \cref{prop:GaussBonnet}. Therefore the defect of every corner is $2$. If no side of $S$ has inner vertices, $S$ is just a $2$-simplex which contradicts the result in \cref{sec:notasimplex} Hence, there is one side $\gamma$ of $S$ that contains an inner vertex. As one side of $S$ does not have inner vertices, one corner $a$ of $\gamma$ coincides with a corner of  a side $\gamma'$ of $S$ without inner vertices. 
	Let $c$ be the corner of $\gamma'$ other than $a$. 
	Since $a$ has defect $2$, corner $c$ is connected to an inner vertex of $\gamma$. 
	As this vertex is an inner vertex, $c$ is connected to at least one other vertex contained in $S$. But then $c$ has degree at least three and hence defect at most $1$ and we arrive at a contradiction.
	
	We have now proven that each case leads to a contradiction. Hence Theorem~\ref{thm:fixedPoint} follows.
\end{proof}

\cref{thm:nonsystolic} is now an easy consequence of \cref{thm:fixedPoint} as if the groups in question were systolic they would admit a geometric action on a systolic complex. However we have just seen that every (simplicial) action admits a fixed point and hence cannot be geometric.

\renewcommand{\refname}{Bibliography}
\bibliography{Bibliography}
\bibliographystyle{alpha}

\end{document}